\newcommand{\Lip}{\operatorname{Lip}}
\newcommand\norm[1]{\left\lVert#1\right\rVert}
\newcommand\normt[1]{\left\lVert#1\right\rVert_{TV}}
\newcommand{\T}{\mathcal{T}}
\newcommand{\Q}{Q}
\newcommand{\Z}{{\cal P}(\mathbb{X})}
\newcommand{\X}{\mathbb{X}}
\newcommand{\Y}{\mathbb{Y}}
\newcommand{\U}{\mathbb{U}}
\newcommand{\sY}{\mathbb{Y}}
\newcommand{\sU}{\mathbb{U}}
\newcommand{\PP}{\mathcal{P}}
\newcommand{\wc}{\mathcal{W}}
\newtheorem{theorem}{Theorem}[section]
\newtheorem{example}{Example}[section]%
\newtheorem{remark}{Remark}[section]%
\newtheorem{definition}{Definition}[section]%
\newtheorem{assumption}{Assumption}[section]%
\newtheorem{corollary}{Corollary}[section]%
\newtheorem{lemma}{Lemma}[section]%
\begin{document}

\title{Sensitivity of Filter Kernels and Robustness Bounds to Transition and Measurement Kernel Perturbations in Partially Observable Stochastic Control}
\author{Yunus Emre Demirci$^{1}$, Ali Devran Kara$^{2}$, and Serdar Y\"uksel$^{1}$%
\thanks{$^{1}$Department of Mathematics and Statistics, Queen's University, Kingston, ON, Canada. 
{\tt\small 21yed@queensu.ca, yuksel@queensu.ca}}%
\thanks{$^{2}$Department of Mathematics, Florida State University, Tallahassee, FL, USA.
{\tt\small akara@fsu.edu}}%
}

\maketitle
\begin{abstract}

Studying the stability of partially observed Markov decision processes (POMDPs) with respect to perturbations in either transition or observation kernels is a significant problem. While asymptotic robustness/stability results as approximate transition kernels and/or measurement kernels converge to the true ones have been previously reported, studies on explicit bounds on value differences and mismatch costs have been limited in scope for POMDPs. In this paper, we provide such explicit bounds under both discounted and average cost criteria. To this end, and also as an independent contribution, we first study the perturbations induced on the filter kernels (that is, the kernels of the belief-MDP reduction of POMDPs) as the transition and measurement kernels are perturbed. The bounds are given in terms of Wasserstein and total variation distances between the original and approximate transition and observation kernels. We then show that control policies optimized for approximate models yield performance guarantees when applied to the true model with explicit bounds. As a particular application, we consider the case where the state space and the measurement spaces are quantized to obtain finite models, and we obtain explicit error bounds which decay to zero as the approximations get finer. This provides explicit performance guarantees for model reduction in POMDPs.

    \end{abstract}
    
\section{Introduction}

Partially observable Markov decision processes (POMDPs) present challenging mathematical problems with significant applied and practical relevance.
This paper studies robustness and stability properties of non-linear filtering (also known as belief-MDPs) in the context of POMDPs, and also considers the control-free case, known as hidden Markov models (HMMs) or partially observable Markov processes (POMPs).

Robustness to model perturbations for fully observable stochastic control problems is a relatively well studied problem (see e.g. \cite{gordienko2008discounted, gordienko2009average,kara2020robustness,Lan81,hernandez2012adaptive}), however, there are comparatively far fewer studies in the partially observable context: In previous such studies, robustness for partially observable models has been studied under different perturbation settings: \cite{kleptsyna2016pert} studied robustness in a filter stability context, \cite{kara2020robustness} studied robustness properties (with positive results, and counterexamples in the absence of continuous convergence of kernels) when the state transition dynamics were perturbed while the measurement/observation kernels are fixed, and conversely, in \cite{WuVerdu,YukselOptimizationofChannels,hogeboom2021continuity}, continuity and robustness were examined in settings where only the measurement/observation kernels were perturbed (with an estimation and information theoretic angle). In these cases, robustness properties in perturbations were demonstrated under those in the total variation metric or metrics inducing weak convergence, with the Hilbert projective metric being considered in \cite{kleptsyna2016pert}. Recent monographs \cite{Kri25,YukselBasarBook24} provide a comprehensive and complementary reviews of the recent literature.

In this paper, we generalize and refine these results by allowing for simultaneous perturbation in both the state transition and measurement/observation kernels, and while doing so obtain quantitative bounds, thereby generalizing previous studies which often only considered continuity properties. We, in particular, quantify the proximity of non-linear filter kernels in terms of the proximities of the transition kernels and the measurement kernels under Wasserstein and bounded-Lipschitz norms, beyond asymptotic convergence. We then present explicit robustness implications for POMDPs given these regularity properties. The problem we present touches on research in a variety of directions, and unifies several results, as we summarize in the following.

\noindent{\bf Weak Feller and Wasserstein Regularity of Belief-MDP Kernels.
}
The regularity properties of the transition kernel of the belief-MDP  play a fundamental role in the existence and approximation of optimal policies in POMDPs. Early results were provided by \cite{CrDo02}, \cite{FeKaZg14}, and \cite{KSYWeakFellerSysCont}, which established the weak Feller continuity of the belief-MDP transition kernel under different structural assumptions. 
Recently, \cite{demirci2023average} \cite{DKY_ACC2025} extended these by presenting sufficient conditions under which the transition kernel of the belief-MDP exhibits uniform Wasserstein continuity. Further studies explored related continuity properties: \cite{Timothy2025} examined POMDPs with initial-state dependent costs via belief-state augmentation, while \cite{TimothyNair2024} derived explicit Lipschitz continuity bounds for value functions in hypothesis testing problems formulated as POMDPs. 

\noindent{\bf Continuity and Robustness in Information Structures and Observation Channels.} Perturbations in measurement channels and information structures lead to an angle which is quite subtle due to fragile dependence on information; to this end, \cite{YukselOptimizationofChannels,hogeboom2021continuity,WuVerdu} studied the problem of continuity of optimal costs when the channels are perturbed. \cite{YukselOptimizationofChannels,YukselBasarBook24} studied the impact of perturbations in measurement channels and information structures, showing that continuity of the optimal cost in observation channels holds under uniform convergence in total variation, but fails under weaker forms of convergence unless structural conditions, such as garbling \cite[Theorem 8.3.4]{YukselBasarBook24}\cite{WuVerdu} or additive noise perturbations. Notably, \cite{YukselOptimizationofChannels} established that optimization problems are not sequentially continuous under weak convergence of observation channels in general, but they are upper semi-continuous under convexity and continuity assumptions. Moreover, if channels are perturbed via a de-garbling sequence—where each channel is a garbling of the next—then sequential continuity can be recovered, as shown in \cite[Theorem 8.3.4]{YukselBasarBook24}\cite{hogeboom2021continuity}. A more recent contribution \cite{MarkovianContinuity2025} establishes continuity of MMSE estimators under weak convergence also by a de-garbling like condition. 

\noindent{\bf Continuity and Robustness in POMDPs to Incorrect Transition Kernels.} In the context of MDPs there have been several studies on robustness to model mismatch including  \cite{gordienko2008discounted, gordienko2009average,Lan81,hernandez2012adaptive,kara2020robustness,KaraYuksel2021Chapter,kara2022robustness} as well as the closely related distibutionally robust formulation (see e.g. \cite{blanchet2016,esfahani2015}). Since we convergence is closely related to POMDPs,  \cite{kara2020robustness,KaraYuksel2021Chapter,kara2022robustness} established that under weak continuity assumptions on the transition kernels and continuity in total variation for observation channels, robustness holds; however, convergence does not generally hold under mere weak or setwise convergence unless these stronger conditions are imposed. Total variation leads to strong bounds as reported in \cite[Appendix A.2 and Theorem 2.5]{kara2020near}, also in this direction a recent contribution \cite[Theorem 15.11]{Kri25} obtains a robustness result involving total variation distance of the kernels.

\subsection{Partially Observed Markov Decision Processes}

Consider a stochastic process $ \{X_k\}_{k \in \mathbb{Z}_+} $ taking values in a Polish metric space $(\mathbb{X}, d)$, governed by the dynamics:
\begin{align}\label{updateEq}
    X_{k+1} = F(X_k, U_k, W_k), \quad
    Y_k = G(X_k, V_k), 
\end{align}
where $\{Y_k\}$ is a measurement sequence taking values in a standard Borel space $\mathbb{Y}$. We assume that the initial state $X_0$ admits a probability measure $\mu \in \mathcal{P}(\mathbb{X})$, and that $\{W_k\}$ and $\{V_k\}$ are mutually independent i.i.d. noise processes.

We denote by $\mathbb{B}(\mathbb{X})$ the Borel $\sigma$-field on $\mathbb{X}$, and by $\mathcal{P}(\mathbb{X})$ the space of probability measures on $(\mathbb{X}, \mathbb{B}(\mathbb{X}))$, equipped with the topology of weak convergence. Similarly, let $\mathcal{P}(\mathcal{P}(\mathbb{X}))$ denote the space of probability measures on $\mathcal{P}(\mathbb{X})$, also equipped with the weak convergence topology. The set of continuous and bounded functions on $\mathbb{X}$ is denoted by $\mathbb{C}(\mathbb{X})$. Throughout this paper, $\mathbb{N}$ represents the set of positive integers.

 At each time step $k$, the decision maker selects a control action $U_k$, incurring a cost $c(X_k, U_k)$. The decision maker only has causal access to the measurement sequence $\{Y_k\}$ and past control actions $\{U_k\}$. Formally, an \emph{admissible policy} $\gamma$ is a sequence of control/decision functions $\{\gamma_k\}_{k \in \mathbb{Z}_+}$, where each $\gamma_k$ is measurable with respect to the $\sigma$-algebra generated by the information available at time $k$:
\[
I_k = \{Y_{[0,k]}, U_{[0,k-1]}\}, \quad \text{with} \quad I_0 = \{Y_0\},
\]
so that
\[
U_k = \gamma_k(I_k), \quad k \in \mathbb{Z}_+,
\]
where we use the notation $Y_{[0,k]}:=\{Y_0,\dots,Y_k\}$. We denote by $\Gamma$ the set of all such admissible policies. Implicitly, policies are also allowed to depend on the prior distribution $\mu$.

We assume that all of the random variables are defined on a common probability space $(\Omega, {\cal F}, P)$ given the initial distribution on the state, and a policy,  on the infinite product space consistent with finite dimensional distributions, by the Ionescu Tulcea Theorem \cite{HernandezLermaMCP}. We will sometimes write the probability measure on this space as $P_\mu^\gamma$ to emphasize the policy $\gamma$ and the initialization $\mu$. We note that (\ref{updateEq}) can also, equivalently (via stochastic realization results \cite[Lemma~1.2]{gihman2012controlled} \cite[Lemma~3.1]{BorkarRealization}, \cite[Lemma F]{aumann1961mixed}), be represented with transition kernels: the state transition kernel is denoted with $\mathcal{T}$ so that for Borel $B \subset \mathbb{X}$ \[{\mathcal{T}}(B|x,u) := P(X_1 \in B | X_0=x,U_0=u), \quad.\] We will denote the measurement kernel with $Q$ so that for Borel $B \subset \mathbb{Y}$: \[Q(B|x) := P(Y_0 \in B | X_0=x).\]
For (\ref{updateEq}), we are interested in minimizing either the average-cost optimization criterion
\begin{equation}\label{expCost}
    J_{\infty}(c,\mu,\mathcal{T}, Q,\gamma) := \limsup_{N \to \infty} \frac{1}{N} E_\mu^\gamma\left[\sum_{k=0}^{N-1} c(X_k,U_k)\right],
\end{equation}
with the optimal cost defined as
\[
J^*_{\infty}(c,\mu,\mathcal{T}, Q) := \inf_{\gamma \in \Gamma} J_{\infty}(c,\mu,\mathcal{T}, Q, \gamma)
\]
or the discounted cost criterion (for some $\beta \in (0,1)$
\begin{equation}\label{expDiscCost}
    J_{\beta}(c,\mu,\mathcal{T}, Q,\gamma) := E_\mu^\gamma\left[\sum_{k=0}^{\infty} \beta^k c(X_k,U_k)\right],
\end{equation}
with the optimal discounted cost defined as
\[
J^*_{\beta}(c,\mu,\mathcal{T}, Q) := \inf_{\gamma \in \Gamma} J_{\beta}(c,\mu,\mathcal{T}, Q, \gamma)
\]
over all admissible control policies $\gamma = \{\gamma_0, \gamma_1, \cdots,\} \in \Gamma$ with $X_0 \sim \mu$. With ${\cal P}(\mathbb{U})$ denoting the set of probability measures on $\mathbb{U}$ endowed with the weak convergence topology, we will also, when needed, allow for independent randomizations so that $\gamma_k(I_k)$ is ${\cal P}(\mathbb{U})$-valued for each realization of $I_k$. Here $c: \mathbb{X} \times \mathbb{U} \to \mathbb{R}_+$ is the stage-wise cost function. 

One may also consider the control-free case where the system equation (\ref{updateEq}) does not have control dependence; in this case only a decision is to be made at every time stage; $U$ is present only in the cost expression in (\ref{expCost}). This important special case has been studied extensively in the theory of non-linear filtering.

\subsection*{Main Contributions}


\begin{itemize}
\item[(i)] \textbf{Proximity of filter kernels in terms of proximities of
transition and measurement kernels:} Suppose that a POMDP is defined by a state transition kernel ${\cal T}$ and an observation kernel $Q$, and let ${\cal T}_n,Q_n$ be their respective approximations. For belief-MDPs induced by these models, we establish explicit and uniform error bounds on the resulting filter kernels in both bounded-Lipschitz and Wasserstein metrics. In Theorem~\ref{main1}, we show that the bounded-Lipschitz distance between the filter kernels is bounded by the sum of the total variation distances of the transition and observation kernels. In Theorem~\ref{main2}, this result is refined under the assumption that the observation kernels are Lipschitz continuous in total variation, leading to a bound involving the Wasserstein-1 distance between the transition kernels. Theorems~\ref{W1main_1} and~\ref{W1main_2} extend these bounds to the Wasserstein-1 distance between the filter kernels.


\item[(ii)] \textbf{Computable Bounds on Continuity and Robustness to Model Perturbations.}
Building on (i), in Section~\ref{robustness} we show that the optimal costs for approximate models converge to the optimal cost of the original system.
%
Furthermore, we show that policies optimized for approximate models perform nearly optimal when applied to the original system: if $\gamma_n^*$ is an optimal policy for $(\mathcal{T}_n,Q_n)$, then we provide uniform upper bounds for
\[
\bigl|J_\beta(c,\mu,{\cal T},Q,\gamma_n^*) - J_\beta^*(c,\mu,{\cal T},Q)\bigr|, \quad 
\bigl|J_\infty(c,\mu,{\cal T},Q,\gamma_n^*) - J_\infty^*(c,\mu,{\cal T},Q)\bigr|.
\]
This contrasts prior research where only asymptotic convergence were presented \cite{kara2020robustness} or where only measurements were perturbed \cite{YukselOptimizationofChannels}.


\item[(iii)] \textbf{Finite-POMDP Approximation via Joint Quantization of State and Measurement Spaces:}
    As a primary implication of our analysis, when the state and measurement spaces are uncountable, by simultaneously quantizing the state and observation spaces, we construct finite-state, finite-observation POMDP models that approximate the original system under Wasserstein and total variation regularities, and thus, as a corollary to our analysis above, we establish explicit and non-asymptotic performance bounds on the suboptimality of policies derived from such finite models. These results, given in Corollary~\ref{thm:joint_discounted_bound} and Corollary~\ref{thm:joint_average_bound}, provide constructive guarantees for finite-POMDP-based control design.

\end{itemize}

\subsection*{Convergence Notions on Kernels}

A sequence of probability measures $\left\{\mu_{n}\right\}_{n\in\mathbb{N}}$ from ${\cal P}(\mathbb{X})$ converges weakly to $\mu \in {\cal P}(\mathbb{X})$ if for every bounded continuous function $f: \mathbb{X} \to \mathbb{R}$,
$$
\int_{\mathbb{X}} f(x) \mu_{n}(d x) \rightarrow \int_{\mathbb{X}} f(x) \mu(d x) \quad \text { as } \quad n \rightarrow \infty .
$$

When X is a separable, completely metrizable space (i.e., Polish), the space $\mathcal{P}(\mathbb{X})$ equipped with the weak topology is itself a Polish space \cite[Chapter 2, Section 6]{Par67}. Consider the classes of test functions and corresponding metrics for probability measures:
\begin{align*}
&\operatorname{BL}(\mathbb{X}) := \left\{ f: \mathbb{X} \to \mathbb{R} \mid \norm{f}_\infty + \norm{f}_L \leq 1 \right\},\\
&\operatorname{T}(\mathbb{X}) := \left\{ f: \mathbb{X} \to \mathbb{R} \mid \norm{f}_\infty \leq 1 \right\},\\ 
& \operatorname{W}(\mathbb{X}) := \left\{ f: \mathbb{X} \to \mathbb{R} \mid \norm{f}_L \leq 1 \right\},
\end{align*}
where
\[
\norm{f}_\infty = \sup_{x \in \mathbb{X}} |f(x)|, \quad
\norm{f}_L = \sup_{x \neq y} \frac{|f(x) - f(y)|}{d(x, y)}.
\]

Given these function classes, we define the following metrics for any $\mu, \nu \in {\cal P}(\X)$:
$$d_F(\mu, \nu) := \sup_{f \in F} \left( \int_{\mathbb{X}} f(x) \mu(dx) - \int_{\mathbb{X}} f(x) \nu(dx) \right).$$
This leads to:
\begin{itemize}
    \item Setting \(F = \operatorname{T}(\mathbb{X})\) gives the total variation metric: $\normt{\mu - \nu}$.
    \item Setting \(F = \operatorname{BL}(\mathbb{X})\) gives the bounded Lipschitz metric: $\rho_{BL}(\mu, \nu)$.
    \item Setting \(F = \operatorname{W}(\mathbb{X})\) gives the Wasserstein-1 metric: $W_1(\mu, \nu)$.
\end{itemize}

We also define uniform metrics on stochastic kernels; let ${\cal T}, {\cal S}: \mathbb{X} \times \mathbb{U} \to {\cal P}(\mathbb{X})$: \begin{align}\label{uniform_metric}
d_{F}(\mathcal{T}, \mathcal{S}):=\sup_{(x, u) \in(\mathbb{X}, \mathbb{U})}\ \sup_{f \in F} \left| \int_{\mathbb{X}} f(y) \mathcal{T}(dy \mid x, u) - \int_{\mathbb{X}} f(y) \mathcal{S}(dy \mid x, u)  \right|. 
\end{align}

By choosing different function classes for $F$ , we obtain various distance metrics. \( F = \operatorname{T}(\mathbb{X}) \) corresponds to the total variation distance ($d_{TV}({\mathcal{T}, \mathcal{S}})$), \( F = \operatorname{BL}(\mathbb{X}) \) gives the bounded Lipschitz distance ($d_{BL}(\mathcal{T}, \mathcal{S})$), and \( F = \operatorname{W}(\mathbb{X}) \) results in the Wasserstein-1 distance ($d_{W_1}(\mathcal{T}, \mathcal{S})$).

\section{Preliminaries: Belief-MDP
Reduction and the Filter Kernel Regularity}

\subsection{Belief-MDP Reduction and the Filter Kernel}
It is well-known that any POMDP can be reduced to a (completely observable) MDP \cite{Yus76}, \cite{Rhe74}, whose states are the posterior state probabilities, or beliefs, of the observer; that is, the state at time $k$ is
\begin{align}
\pi_k(\,\cdot\,) := P\{X_{k} \in \,\cdot\, | Y_0,\ldots,Y_k, U_0, \ldots, U_{k-1}\} \in {\cal P}(\mathbb{X}). \nonumber
\end{align}
We call this equivalent MDP the belief-MDP\index{Belief-MDP}. The belief-MDP has state space ${\cal P}(\mathbb{X})$ and action space $\U$. Here, ${\cal P}(\mathbb{X})$ is equipped with the Borel $\sigma$-algebra generated by the topology of weak convergence \cite{Bil99}. Since $\mathbb{X}$ is a Borel space, ${\cal P}(\mathbb{X})$ is metrizable with the Prokhorov metric which makes ${\cal P}(\mathbb{X})$ into a Borel space \cite{Par67}. The transition probability $\eta$ of the belief-MDP can be constructed as follows. If we define the measurable function 
\[F(\pi,u,y) := Pr\{X_{k+1} \in \,\cdot\, | \pi_k = \pi, U_k = u, Y_{k+1} = y\}\]
 from ${\cal P}(\mathbb{X})\times\mathbb{U}\times\sY$ to ${\cal P}(\mathbb{X})$ and the stochastic kernel $H(\,\cdot\, | \pi,u) := Pr\{Y_{k+1} \in \,\cdot\, | \pi_k = \pi, U_k = u\}$ on $\sY$ given ${\cal P}(\mathbb{X}) \times \sU$, then $\eta$ can be written as
\begin{align}
\eta(\,\cdot\,|\pi,u) = \int_{\sY} 1_{\{F(\pi,u,y) \in \,\cdot\,\}} H(dy|\pi,u). \label{kernelFilter}
\end{align}
The one-stage cost function $c$ of the belief-MDP is given by
\begin{align}
\tilde{c}(\pi,u) := \int_{\mathbb{X}} c(x,u) \pi(dx). \label{weak:eq8}
\end{align}
With cost function $c(x,u)$ is continuous and bounded on $\mathbb{X} \times \mathbb{U}$, with an application of the generalized dominated convergence theorem \cite[Theorem 3.5]{Lan81} \cite[Theorem 3.5]{serfozo1982convergence}, we have that $\tilde{c}(\pi,u):=\int  c(x,u)\pi(dx): {\cal P}(\mathbb{X}) \times \mathbb{U} \to \mathbb{R}$ is also continuous and bounded, and thus Borel measurable.

In particular, the belief-MDP is a (fully observed) Markov decision process with the components $({\cal P}(\mathbb{X}),\sU,\eta,\tilde{c})$. 

For finite horizon problems and a large class of infinite horizon discounted cost problems, it is then a standard result that an optimal control policy will use the belief $\pi_k$ as a sufficient statistic for optimal policies (see \cite{Yus76,Rhe74,Blackwell2}).

\subsection{Filter Kernel Regularity: Weak Feller and Uniform Wasserstein Continuity of $\eta$}

\subsubsection*{Weak Feller continuity results of the belief-MDP}

Revisiting \cite{KSYWeakFellerSysCont, FeKaZg14} and \cite{feinberg2023equivalent}, this section studies the weak Feller property of the kernel defined in (\ref{kernelFilter}); that is, the property that for every $f \in C({\cal P}(\mathbb{X}))$, \[\int f(z_1)\eta(dz_1 |z_0=\pi,u_0=u) : {\cal P}(\mathbb{X}) \times \mathbb{U} \to \mathbb{R},\]
is continuous in $(\pi,u)$. 
\begin{assumption}\label{TV_channel}
\begin{itemize}
\item[(i)] The transition probability $\mathcal{T}(\cdot|x,u)$ is weakly continuous (weak Feller) in $(x,u)$, i.e., for any $(x_n,u_n)\to (x,u)$, $\mathcal{T}(\cdot|x_n,u_n)\to \mathcal{T}(\cdot|x,u)$ weakly.
\item[(ii)] The observation channel $Q(\cdot|x,u)$ is continuous in total variation, i.e., for any $(x_n,u_n) \to (x,u)$, $Q(\cdot|x_n,u_n) \rightarrow Q(\cdot|x,u)$ in total variation.
\end{itemize}
\end{assumption}

\begin{assumption}\label{TV_kernel}
\begin{itemize}
\item[(i)] The transition probability $\mathcal{T}(\cdot|x,u)$ is continuous in total variation in $(x,u)$, i.e., for any $(x_n,u_n)\to (x,u)$, $\mathcal{T}(\cdot|x_n,u_n) \to \mathcal{T}(\cdot|x,u)$ in total variation.
\item[(ii)] The observation channel $Q(\cdot|x)$ is independent of the control variable.
\end{itemize}
\end{assumption}

\begin{theorem} \label{Weak_tran_channel_thm_comb}
\begin{itemize}
\item[(i)] \cite{FeKaZg14} (see also \cite{CrDo02})
Under Assumption \ref{TV_channel}, the transition probability $\eta(\cdot|z,u)$ of the filter process is weakly continuous in $(z,u)$.
\item[(ii)] \cite{KSYWeakFellerSysCont} 
Under Assumption \ref{TV_kernel}, the transition probability $\eta(\cdot|z,u)$ of the filter process is weakly continuous in $(z,u)$.
\end{itemize}
\end{theorem}

%
%

On the weak Feller property, further results are reported in  \cite{CrDo02,feinberg2022markov,feinberg2023equivalent,kara2020near}. 

\subsubsection*{Uniform Wasserstein continuity results of the belief-MDP}\label{wass_cont}


Recently, \cite{demirci2023average,demirciRefined2023} presented the following regularity results for controlled filter processes, which will later be critical for our robustness results to be presented in Section \ref{robustness}. Let us first recall the following:

\begin{definition}\cite[Equation 1.16]{dobrushin1956central}[Dobrushin coefficient]
For a kernel operator $K:S_{1} \to \mathcal{P}(S_{2})$ (that is a regular conditional probability from $S_1$ to $S_2$) for standard Borel spaces $S_1, S_2$, we define the Dobrushin coefficient as:
\begin{align}
\delta(K)&=\inf\sum_{i=1}^{n}\min(K(x,A_{i}),K(y,A_{i}))\label{Dob_def}
\end{align}
where the infimum is over all $x,y \in S_{1}$ and all partitions $\{A_{i}\}_{i=1}^{n}$ of $S_{2}$.
\end{definition}



\begin{assumption}\label{main_assumption}
\noindent
\begin{enumerate}
\item \label{compactness}
$(\mathbb{X}, d)$ is a bounded compact metric space 
with diameter $D$ (where $D=\sup_{x,y \in \mathbb{X}} d(x,y)$).
\item \label{totalvar}
The transition probability $\mathcal{T}(\cdot \mid x, u)$ is 
continuous in total variation in $(x, u)$, i.e., 
for any $\left(x_n, u_n\right) \rightarrow(x, u), 
\mathcal{T}\left(\cdot \mid x_n, u_n\right) \rightarrow 
\mathcal{T}(\cdot \mid x, u)$ in total variation.
\item \label{regularity}
There exists 
$\alpha \in R^{+}$such that 
$$
\left\|\mathcal{T}(\cdot \mid x, u)-\mathcal{T}\left(\cdot \mid x^{\prime}, u\right)\right\|_{T V} \leq \alpha d(x, x^{\prime})
$$
for every $x,x' \in \mathbb{X}$, $u \in \mathbb{U}$.
\item \label{CostLipschitz}
There exists $K_1 \in \mathbb{R}^+$ such that
\[|c(x,u) - c(x',u)| \leq K_1 d(x,x').\]
for every $x,x' \in \mathbb{X}$, $u \in \mathbb{U}$.
\item The cost function $c$ is bounded and continuous.
\end{enumerate}
\end{assumption}

\begin{theorem}\label{ergodicity}\cite[Theorem 2.2]{demirci2023average}
    Assume that $\mathbb{X}$ and $\mathbb{Y}$ are Polish spaces. 
    If Assumptions \ref{main_assumption}-\ref{compactness},\ref{regularity} are 
    fulfilled, then we have
    $$
    W_{1}\left(\eta(\cdot \mid z_0, u), \eta\left(\cdot \mid z_0^{\prime},u\right)\right) 
    \leq K_2 W_{1}\left(z_0, z_0^{\prime}\right),$$
with    
\begin{align}\label{K2DefWasF}
K_2:=\frac{\alpha D (3-2\delta(Q))}{2}
\end{align}
    for all $z_0,z_0' \in \cal{P}(\mathbb{X})$, $u \in \mathbb{U}$.
\end{theorem}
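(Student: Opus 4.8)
The plan is to estimate the $W_1$--distance between the two filter measures via Kantorovich--Rubinstein duality, and then split the resulting quantity into a term controlled by the stability of the Bayesian update and a term controlled by the stability of the observation marginal. Since $(\mathbb{X},d)$ has diameter $D$, the space $(\mathcal{P}(\mathbb{X}),W_1)$ has $W_1$--diameter at most $D$, so it suffices to bound $\int G\,d\eta(\cdot\mid z_0,u)-\int G\,d\eta(\cdot\mid z_0',u)$ over all $G:\mathcal{P}(\mathbb{X})\to\mathbb{R}$ with $\|G\|_L\le 1$; after recentering we may further assume $\|G\|_\infty\le D/2$. Writing $\mathcal{T}z:=\int_{\mathbb{X}}\mathcal{T}(\cdot\mid x,u)\,z(dx)$ for the one-step predictive measure, using the representation \eqref{kernelFilter} for $\eta$, and adding and subtracting $\int G(F(z_0',u,y))\,H(dy\mid z_0,u)$, I obtain
\begin{align*}
\int G\,d\eta(\cdot\mid z_0,u)-\int G\,d\eta(\cdot\mid z_0',u)
&=\int_{\mathbb{Y}}\bigl[G(F(z_0,u,y))-G(F(z_0',u,y))\bigr]H(dy\mid z_0,u)\\
&\quad+\int_{\mathbb{Y}}G(F(z_0',u,y))\bigl[H(dy\mid z_0,u)-H(dy\mid z_0',u)\bigr]\ =:\ (\mathrm{I})+(\mathrm{II}).
\end{align*}

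For $(\mathrm{II})$ I would use $|(\mathrm{II})|\le\tfrac{D}{2}\|H(\cdot\mid z_0,u)-H(\cdot\mid z_0',u)\|_{TV}$ together with the identity $H(\cdot\mid z,u)=Q\,\mathcal{T}z$ (the observation marginal is the predictive measure pushed through the channel). The Dobrushin contraction gives $\|Q\mu-Q\nu\|_{TV}\le(1-\delta(Q))\|\mu-\nu\|_{TV}$, and Assumption~\ref{main_assumption}-\ref{regularity} gives $\|\mathcal{T}z_0-\mathcal{T}z_0'\|_{TV}\le\alpha\,W_1(z_0,z_0')$ --- indeed, for $\|g\|_\infty\le 1$ the map $x\mapsto\int g\,d\mathcal{T}(\cdot\mid x,u)$ is $\alpha$-Lipschitz, so this is Kantorovich--Rubinstein duality applied once more. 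Hence $|(\mathrm{II})|\le\tfrac{\alpha D}{2}(1-\delta(Q))\,W_1(z_0,z_0')$.

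For $(\mathrm{I})$, the $1$-Lipschitzness of $G$ (for $W_1$) gives $|(\mathrm{I})|\le\int_{\mathbb{Y}}W_1\bigl(F(z_0,u,y),F(z_0',u,y)\bigr)H(dy\mid z_0,u)$. Since $F(z,u,\cdot)$ is the Bayesian posterior of the predictive prior $\mathcal{T}z$ under $Q$, and $H(\cdot\mid z_0,u)=Q\,\mathcal{T}z_0$ is precisely the observation marginal of that prior, the heart of the proof is the averaged Bayes-stability estimate
\[
\int_{\mathbb{Y}}W_1\bigl(\Phi_Q(\rho,y),\Phi_Q(\sigma,y)\bigr)\,(Q\rho)(dy)\ \le\ (2-\delta(Q))\,W_1(\rho,\sigma),\qquad \rho,\sigma\in\mathcal{P}(\mathbb{X}),
\]
where $\Phi_Q(\rho,\cdot)$ is the posterior associated with prior $\rho$ and channel $Q$. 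Applied with $\rho=\mathcal{T}z_0$, $\sigma=\mathcal{T}z_0'$, and combined with $W_1(\mathcal{T}z_0,\mathcal{T}z_0')\le\int W_1\bigl(\mathcal{T}(\cdot\mid x,u),\mathcal{T}(\cdot\mid x',u)\bigr)\lambda(dx,dx')\le\tfrac{\alpha D}{2}W_1(z_0,z_0')$ --- for an optimal coupling $\lambda$ of $z_0,z_0'$, the elementary bound $W_1\le\tfrac{D}{2}\|\cdot\|_{TV}$ valid on a diameter-$D$ space, and Assumption~\ref{main_assumption}-\ref{regularity} --- this yields $|(\mathrm{I})|\le(2-\delta(Q))\tfrac{\alpha D}{2}W_1(z_0,z_0')$. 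Adding the two estimates gives $W_1\bigl(\eta(\cdot\mid z_0,u),\eta(\cdot\mid z_0',u)\bigr)\le\tfrac{\alpha D}{2}\bigl[(2-\delta(Q))+(1-\delta(Q))\bigr]W_1(z_0,z_0')=K_2\,W_1(z_0,z_0')$, as claimed.

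The main obstacle is the averaged Bayes-stability estimate. To prove it I would write the two posteriors as likelihood-reweightings of an optimal coupling of $\rho$ and $\sigma$ and control how far the associated normalizing constants (equivalently, the likelihood ratios at the observed $y$) can drift apart; the overlap between the conditional laws $\{Q(\cdot\mid x)\}$ measured by $\delta(Q)$ is exactly what caps this drift, and the constant $2-\delta(Q)$ looks sharp, being attained as $\delta(Q)\to 1$ (where $\Phi_Q(\rho,y)=\rho$ and the estimate degenerates to an equality). A secondary technicality is that $F(z_0',u,\cdot)$ is canonically defined only $H(\cdot\mid z_0',u)$-a.e.\ while in $(\mathrm{I})$ it is integrated against $H(\cdot\mid z_0,u)$; this is handled by passing to a common reference measure on $\mathbb{Y}$ dominating the observation laws in play and working with density versions of the filter map (the general Polish case following by approximation), or equivalently by realizing $(\mathrm{I})$ through a maximal coupling of the two observation variables.
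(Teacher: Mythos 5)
The paper does not actually prove this statement; it is imported verbatim from \cite[Theorem 2.2]{demirci2023average}. Your architecture matches the one used in that line of work: dualize over $1$-Lipschitz $G$ with $\|G\|_\infty\le D/2$, split into a Bayes-update term $(\mathrm{I})$ and an observation-marginal term $(\mathrm{II})$ via (\ref{kernelFilter}), get $(1-\delta(Q))$ for $(\mathrm{II})$ from the Dobrushin contraction of $Q$ together with $\|\mathcal{T}z_0-\mathcal{T}z_0'\|_{TV}\le\alpha W_1(z_0,z_0')$, and assemble $(2-\delta(Q))+(1-\delta(Q))=3-2\delta(Q)$. Your treatment of $(\mathrm{II})$ is correct.

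The gap is exactly the lemma you flag as the heart of the proof: the averaged Bayes-stability estimate is \emph{false} as you state it, with $W_1$ on both sides. Take $\mathbb{X}=\{0,\epsilon,1\}\subset[0,1]$, $\mathbb{Y}=\{A,B\}$, $Q(A\mid\epsilon)=1$ and $Q(A\mid 0)=Q(A\mid 1)=\kappa$ with $\kappa$ small (so $\delta(Q)=\kappa$), and priors $\rho=(\tfrac12-s,\,s,\,\tfrac12)$, $\sigma=(\tfrac12,\,0,\,\tfrac12)$ on $(0,\epsilon,1)$. Then $W_1(\rho,\sigma)=s\epsilon$, while as $\kappa\to0$ one has $(Q\rho)(A)\to s$, $\Phi_Q(\rho,A)\to\delta_\epsilon$ and $\Phi_Q(\sigma,A)\to\tfrac12\delta_0+\tfrac12\delta_1$, whose $W_1$ distance is $\tfrac12$; hence the left side is at least about $s/2$, which exceeds $(2-\delta(Q))\,s\epsilon$ once $\epsilon<1/4$. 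The Bayes map can turn a prior discrepancy concentrated on a set of tiny diameter into a posterior discrepancy spread across the whole space, so no $W_1$-to-$W_1$ bound with a geometry-free constant can hold. The correct form of the lemma (the McDonald--Y\"uksel one-step bound used in \cite{demirci2023average}) is in total variation,
\[
\int_{\mathbb{Y}}\left\|\Phi_Q(\rho,y)-\Phi_Q(\sigma,y)\right\|_{TV}\,(Q\rho)(dy)\;\le\;(2-\delta(Q))\,\|\rho-\sigma\|_{TV},
\]
and your proof is repaired with no change to the final constant by bounding $(\mathrm{I})\le\int W_1\,H(dy\mid z_0,u)\le\frac{D}{2}\int\|\cdot\|_{TV}\,H(dy\mid z_0,u)\le\frac{D}{2}(2-\delta(Q))\|\mathcal{T}z_0-\mathcal{T}z_0'\|_{TV}\le\frac{D}{2}(2-\delta(Q))\,\alpha\,W_1(z_0,z_0')$, reusing the estimate you already derived for $(\mathrm{II})$ instead of passing through $W_1(\mathcal{T}z_0,\mathcal{T}z_0')$. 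That your invalid chain lands on the right constant is a coincidence of this particular pair $\rho=\mathcal{T}z_0$, $\sigma=\mathcal{T}z_0'$, for which the $W_1$ chain and the TV chain happen to produce the same numbers; the lemma as you stated it for arbitrary $\rho,\sigma$ cannot be proved because it is not true.
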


Similarly, \cite{DKY_ACC2025} has established the following theorem under the assumption that the transition kernel is continuous with respect to the $W_1$ metric:

\begin{assumption}\label{weakTtvQweakTtvQ2}
\begin{itemize}
\item[(i)] $(\mathbb{X}, d)$ is a compact metric space.
\item[(ii)] There exists a constant $\theta \in (0,1)$ such that
$$
W_1\left(\mathcal{T}(\cdot \mid x, u) - \mathcal{T}\left(\cdot \mid x^{\prime}, u\right)\right) \leq \theta \cdot d\left(x, x^{\prime}\right)
$$
for every $x, x^{\prime} \in \mathbb{X}, u \in \mathbb{U}$.
\item[(iii)] There exists a constant $\gamma \in \mathbb{R}^{+}$ such that
$$
\left\|Q(\cdot \mid x) - Q\left(\cdot \mid x^{\prime}\right)\right\|_{TV} \leq \gamma \cdot d\left(x, x^{\prime}\right)
$$
for every $x, x^{\prime} \in \mathbb{X}$.
\end{itemize}
\end{assumption}

 \begin{theorem}\cite[Theorem 2.4]{DKY_ACC2025}\label{WassersteinCont2}
    Assume that $\mathbb{X}$ and $\mathbb{Y}$ are Polish spaces. 
    Under Assumption \ref{weakTtvQweakTtvQ2}, we have
$$
W_1\left(\eta\left(\cdot \mid z_0, u\right), \eta\left(\cdot \mid z_0^{\prime}, u\right)\right) \leq \left( \theta +\frac{3 \theta \gamma D}{2} \right) W_1\left(z_0, z_0^{\prime}\right)
$$
for all $z_0, z_0^{\prime} \in \mathcal{Z}, u \in \mathbb{U}$, where $D=\sup _{x, y \in \mathbb{X}} d(x, y)$.
\end{theorem}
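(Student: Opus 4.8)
The plan is to work with the disintegration of the belief kernel recorded in \eqref{kernelFilter}. Write $\bar\pi$ for the image (pushforward) of $\pi\in\mathcal{P}(\mathbb{X})$ under the one-step predictor $\mathcal{T}(\cdot\mid\cdot,u)$, and $B_y(\nu)\in\mathcal{P}(\mathbb{X})$ for the Bayesian update of a prior $\nu$ given the observation $y$ through the channel $Q$; then $H(\cdot\mid\pi,u)=\bar\pi Q$ and $F(\pi,u,y)=B_y(\bar\pi)$, so $\eta(\cdot\mid\pi,u)$ is the image of $\bar\pi Q$ under $y\mapsto B_y(\bar\pi)$. Fix $z_0,z_0'\in\mathcal{P}(\mathbb{X})$ and $u$. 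By Kantorovich--Rubinstein duality on $(\mathcal{P}(\mathbb{X}),W_1)$ it suffices to bound $\int g\,d\eta(\cdot\mid z_0,u)-\int g\,d\eta(\cdot\mid z_0',u)$ uniformly over $g:\mathcal{P}(\mathbb{X})\to\mathbb{R}$ that are $1$-Lipschitz for the $W_1$ metric on $\mathcal{P}(\mathbb{X})$; since $\mathcal{P}(\mathbb{X})$ has $W_1$-diameter at most $D$, such a $g$ may be normalized to satisfy $\|g\|_\infty\le D/2$.

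The next step is to telescope the difference as $(\mathrm{I})+(\mathrm{II})$, where $(\mathrm{I})=\int\bigl(g(B_y(\bar z_0))-g(B_y(\bar z_0'))\bigr)\,(\bar z_0 Q)(dy)$ changes only the prior fed into Bayes' rule while keeping the observation law, and $(\mathrm{II})=\int g(B_y(\bar z_0'))\,d(\bar z_0 Q-\bar z_0' Q)(y)$ changes only the observation law. Term $(\mathrm{II})$ is handled by boundedness of $g$: $|(\mathrm{II})|\le\tfrac D2\,\|\bar z_0 Q-\bar z_0' Q\|_{TV}$, and then $\|\bar z_0 Q-\bar z_0' Q\|_{TV}\le\gamma\,W_1(\bar z_0,\bar z_0')\le\theta\gamma\,W_1(z_0,z_0')$, where the first inequality uses that $x\mapsto\int\phi\,dQ(\cdot\mid x)$ is $\gamma$-Lipschitz for every $\|\phi\|_\infty\le1$ (Assumption~\ref{weakTtvQ2}) and the second is the standard fact that, $\mathcal{T}(\cdot\mid x,u)$ being $\theta$-Lipschitz in $W_1$, the prediction map $\pi\mapsto\bar\pi$ is $\theta$-Lipschitz in $W_1$ (Assumption~\ref{weakTtvQ}, by coupling $z_0,z_0'$ optimally and then $\mathcal{T}(\cdot\mid x,u),\mathcal{T}(\cdot\mid x',u)$ optimally for each pair). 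Thus $|(\mathrm{II})|\le\tfrac{\theta\gamma D}{2}\,W_1(z_0,z_0')$.

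Term $(\mathrm{I})$ is the core. Since $g$ is $1$-Lipschitz, $|(\mathrm{I})|\le\int_{\mathbb{Y}}W_1\bigl(B_y(\bar z_0),B_y(\bar z_0')\bigr)(\bar z_0 Q)(dy)$, so the problem reduces to the key lemma
\[
\int_{\mathbb{Y}} W_1\bigl(B_y(\mu),B_y(\mu')\bigr)\,(\mu Q)(dy)\ \le\ (1+\gamma D)\,W_1(\mu,\mu')\qquad\text{for all }\mu,\mu'\in\mathcal{P}(\mathbb{X}).
\]
To prove this I would fix a reference measure $\lambda$ for the family $\{Q(\cdot\mid x)\}$ and a jointly measurable density $g_y(x)=\tfrac{dQ(\cdot\mid x)}{d\lambda}(y)$, so that $(\mu Q)(dy)=\mu(g_y)\,\lambda(dy)$ and $B_y(\mu)=g_y\mu/\mu(g_y)$. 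Since $\mathbb{X}$ is compact, the $1$-Lipschitz functions on $\mathbb{X}$ with $\|\cdot\|_\infty\le D/2$ are compact (hence separable) in the sup-norm, so one may select, up to an arbitrary error $\varepsilon$, a measurable family $y\mapsto f_y$ of such test functions nearly attaining $W_1(B_y(\mu),B_y(\mu'))$. Applying the elementary identity $\frac ab-\frac{a'}{b'}=\frac{a-a'}{b}-\frac{a'}{b'}\cdot\frac{b-b'}{b}$ with $a=\mu(f_yg_y),\,b=\mu(g_y),\,a'=\mu'(f_yg_y),\,b'=\mu'(g_y)$ and integrating against $(\mu Q)(dy)=\mu(g_y)\,\lambda(dy)$ cancels the $\mu(g_y)$ denominators and, using $|\mu'(f_yg_y)/\mu'(g_y)|\le\|f_y\|_\infty\le D/2$, leaves $\int|\mu(f_yg_y)-\mu'(f_yg_y)|\,\lambda(dy)+\tfrac D2\int|\mu(g_y)-\mu'(g_y)|\,\lambda(dy)$. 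Writing each $L^1(\lambda)$ norm as a supremum over sign functions $\psi$ and using Fubini turns these into $\sup_\psi|\int G\,d(\mu-\mu')|$ with $G(x)=\int\psi(y)f_y(x)\,Q(dy\mid x)$, respectively $G(x)=\int\psi(y)\,Q(dy\mid x)$; a short estimate using that each $f_y$ is $1$-Lipschitz with $\|f_y\|_\infty\le D/2$ and Assumption~\ref{weakTtvQ2} shows these $G$ are $(1+\tfrac{\gamma D}{2})$-Lipschitz and $\gamma$-Lipschitz respectively, so Kantorovich--Rubinstein bounds them by $(1+\tfrac{\gamma D}{2})W_1(\mu,\mu')$ and $\tfrac{\gamma D}{2}W_1(\mu,\mu')$; summing and letting $\varepsilon\to0$ gives the lemma. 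Applying it with $\mu=\bar z_0$, $\mu'=\bar z_0'$ and $W_1(\bar z_0,\bar z_0')\le\theta W_1(z_0,z_0')$ yields $|(\mathrm{I})|\le\theta(1+\gamma D)\,W_1(z_0,z_0')$, and adding the bound on $(\mathrm{II})$ produces $\bigl(\theta+\theta\gamma D+\tfrac{\theta\gamma D}{2}\bigr)W_1(z_0,z_0')=\bigl(\theta+\tfrac{3\theta\gamma D}{2}\bigr)W_1(z_0,z_0')$, as claimed.

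The main obstacle is the key lemma, and within it two measure-theoretic points: first, the disintegration $Q(dy\mid x)=g_y(x)\,\lambda(dy)$ and the requirement that $W_1(B_y(\mu),B_y(\mu'))$ be defined $(\mu Q)$-a.e., which forces a suitable non-degeneracy of the channel (a reference measure with a.e.\ positive $g_y$ supplies exactly this, and it is tacitly needed whenever one compares posteriors from different priors); and second, the measurable selection of the near-optimal test functions $y\mapsto f_y$, which is what legitimizes interchanging the supremum defining $W_1(B_y(\mu),B_y(\mu'))$ with the $y$-integration and which relies on compactness of $\mathbb{X}$. The remaining ingredients — the $W_1$-Lipschitzness of the prediction step, the ratio identity, and the Lipschitz bounds on the auxiliary functions $G$ — are routine.
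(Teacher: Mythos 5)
Your proposal is correct and recovers the exact constant $\theta+\tfrac{3\theta\gamma D}{2}$; note that this paper does not prove the statement itself but imports it from \cite[Theorem 2.4]{DKY_ACC2025}, and your decomposition into a prior-perturbation term and an observation-law term, handled via the ratio identity for the Bayes update, Kantorovich--Rubinstein duality, and a measurable selection of near-optimal test functions, is essentially the same machinery that the paper's own Appendix proofs (of Theorems \ref{main1}--\ref{W1main_2}) and the cited reference employ. The one caveat you already flag --- that $B_y(\bar z_0')$ must be defined $(\bar z_0 Q)$-a.e., i.e.\ a non-degeneracy of the channel so that $\bar z_0'(g_y)>0$ where $\bar z_0(g_y)>0$ --- is a genuine tacit hypothesis of this style of argument and is handled the same way in the source, so it does not constitute a gap relative to the cited proof.
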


\section{Proximity of Belief-MDP Kernels in terms of Proximities of Transition Kernels and Measurement Kernels}

Consider two models:
\begin{itemize}
\item[(i)] A true model determined by \((\T, \Q)\), leading to a filter kernel $\eta$. This, given a fixed prior, induce the measure $P$ on ${\cal B}(\mathbb{X}^{\mathbb{Z}_+} \times \mathbb{Y}^{\mathbb{Z}_+})$.
\item[(ii)] An approximate model determined by \((\T_n, \Q_n)\), leading to a filter kernel $\eta^{{\cal T}_n,Q_n}$. This, given a fixed prior, induce the measure $P^{{\cal T}_n,Q_n}$ on ${\cal B}(\mathbb{X}^{\mathbb{Z}_+} \times \mathbb{Y}^{\mathbb{Z}_+})$
\end{itemize}

\subsection{Filter Kernel Proximities under the Bounded-Lipschitz Metric}

The uniform metric defined in (\ref{uniform_metric}) can be applied to transition kernels defined over the belief space $\PP (\X)$. For two such kernels $\eta, \eta' : \PP(\X) \times \U \to {\cal P}(\PP(\X))$, 
we write:
\begin{align*}
d_F\left(\eta, \eta^{\prime}\right):=\sup _{(\mu, u) \in \PP(\X) \times \U} \sup _{f \in F}\left|\int f(\nu) \eta(d \nu \mid \mu, u)-\int f(\nu) \eta^{\prime}(d \nu \mid \mu, u)\right| .
\end{align*}

In particular, if $F=T(\mathcal{P}(\X))$, the set of measurable functions with $\norm{f}_{\infty} \leq 1 $, then $d_F$ corresponds to the uniform total variation distance ($d_{TV}$). If $F=W(\mathcal{P}(\X))$, the set of 1-Lipschitz functions on the belief space, then $d_F$ corresponds to the uniform Wasserstein-1 distance ($d_{W_1}$).

We now present bounds for the distance between the filter kernels:
\begin{theorem}\label{main1}
 $d_{BL}(\eta,\eta^{{\cal T}_n,Q_n}) \leq 2 \left( d_{TV}(\mathcal{T}_n,\mathcal{T}) + d_{TV}(Q_n,Q)\right).$
\end{theorem}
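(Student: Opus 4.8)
The plan is to unwind the definition of the bounded-Lipschitz distance between the two filter kernels and push the discrepancy back, term by term, to the transition and measurement kernels. Fix a belief state $\mu \in \PP(\X)$, a control $u \in \U$, and a test function $f \in \operatorname{BL}(\PP(\X))$. By the representation (\ref{kernelFilter}), $\int f(\nu)\,\eta(d\nu\mid\mu,u) = \int_{\Y} f(F(\mu,u,y))\, H(dy\mid\mu,u)$, and similarly for $\eta^{{\cal T}_n,Q_n}$ with the Bayesian update $F_n$ and the predicted-observation kernel $H_n$ built from $({\cal T}_n, Q_n)$. So the quantity to bound is
\[
\left| \int_{\Y} f(F(\mu,u,y))\, H(dy\mid\mu,u) - \int_{\Y} f(F_n(\mu,u,y))\, H_n(dy\mid\mu,u) \right|.
\]
I would split this via a triangle inequality into (a) a term where the integrand is fixed at $f(F(\mu,u,y))$ but the integrating measure changes from $H$ to $H_n$, and (b) a term integrated against $H_n$ where the integrand changes from $f(F(\mu,u,y))$ to $f(F_n(\mu,u,y))$.

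For term (a), since $\|f\|_\infty \le 1$ and $y \mapsto f(F(\mu,u,y))$ is bounded by $1$, this term is at most $\normt{H(\cdot\mid\mu,u) - H_n(\cdot\mid\mu,u)}$. Now $H(\cdot\mid\mu,u) = \int_\X Q(\cdot\mid x)\, \bar{\mu}(dx)$ where $\bar\mu(dx) = \int {\cal T}(dx\mid x',u)\mu(dx')$ is the predictive measure, and $H_n$ has the same form with $Q_n, {\cal T}_n$. A further triangle inequality over $(Q$ vs.\ $Q_n)$ and $(\bar\mu$ vs.\ $\bar\mu_n)$, using that total variation of a mixture is controlled by the mixture of total variations and by the total variation of the mixing measures, bounds term (a) by $d_{TV}(Q_n,Q) + d_{TV}({\cal T}_n,{\cal T})$. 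For term (b), the key sub-lemma is that $\normt{F(\mu,u,y) - F_n(\mu,u,y)}$, after integrating $y$ against $H_n(\cdot\mid\mu,u)$, is controlled by $d_{TV}({\cal T}_n,{\cal T}) + d_{TV}(Q_n,Q)$; then $\|f\|_L \le 1$ together with the fact that total variation dominates the Wasserstein/BL distance on $\PP(\X)$ (as both test-function classes sit inside the unit TV ball up to the relevant normalization) transfers this to $|f(F)-f(F_n)|$. Summing (a) and (b) and taking the supremum over $(\mu,u)$ and $f$ gives the claimed factor of $2$.

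The main obstacle is term (b): controlling the Bayesian posteriors $F(\mu,u,y)$ and $F_n(\mu,u,y)$ when \emph{both} the likelihood (through $Q$ vs.\ $Q_n$) and the prior-predictive (through ${\cal T}$ vs.\ ${\cal T}_n$) are perturbed simultaneously. The normalization in Bayes' rule ($F(\mu,u,y)(dx) \propto Q(dy\mid x)\bar\mu(dx)$) is not Lipschitz in the likelihood uniformly in $y$, so one cannot naively bound the posterior difference pointwise in $y$; the resolution is to integrate over $y$ against $H_n$ first, which is precisely the normalizing constant appearing in the denominator, so that the troublesome denominators cancel and the bound becomes an averaged $L^1$-type estimate rather than a pointwise one. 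I expect the argument there to mirror the disintegration computations used in \cite{kara2020robustness} and \cite{kara2020near}, and the constant $2$ to arise from handling the numerator-and-denominator perturbations in the Radon–Nikodym expression separately.
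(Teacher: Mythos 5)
Your decomposition---a change-of-measure term for the predicted observation law plus a change-of-posterior term, with the posterior difference averaged against the observation marginal so that the Bayes normalizing denominators cancel---is exactly the argument the paper uses, the only structural differences being that the paper routes the two perturbations through an intermediate model $(\mathcal{T},Q_n)$ and treats them one at a time, and that it makes your term (b) rigorous via a Kuratowski--Ryll-Nardzewski measurable selection of optimal test functions $g_{y}$ before exchanging the supremum over $\operatorname{BL}_1(\mathbb{X})$ with the integral over $y$. The bookkeeping also agrees: term (a) carries $\|f\|_\infty$ and the averaged posterior term carries $2\|f\|_L$ (one unit from swapping the marginal, one from the cancellation step), so the constant $2$ comes from $\|f\|_\infty+2\|f\|_L\le 2$ on the bounded-Lipschitz unit ball, as you anticipated.
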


The following assumption will lead to a refinement.

\begin{assumption}\label{channel_reg}
For the measurement channels $\{Q_n\}_n$ and $Q$, we assume that
\begin{align*}
&\|Q_n(\cdot|x)-Q_n(\cdot|x')\|_{TV}\leq L_Q \|x-x'\|\\
&\|Q(\cdot|x)-Q(\cdot|x')\|_{TV}\leq L_Q \|x-x'\|
\end{align*}
for all $x,x'\in\mathbb{X}$ for some $L_Q<\infty$.
\end{assumption}

\begin{theorem}\label{main2}
Under Assumption \ref{channel_reg}, we have that
$$d_{BL}(\eta,\eta^{{\cal T}_n,Q_n}) \leq 2 \left( L_Q d_{W_1}\left(\mathcal{T}_n,\mathcal{T}\right) + d_{TV}(Q_n,Q)\right).$$
\end{theorem}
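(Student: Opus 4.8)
The plan is to revisit the proof of Theorem~\ref{main1} and track where the total variation distance $d_{TV}(\mathcal{T}_n,\mathcal{T})$ actually enters, then replace that appeal with a Wasserstein estimate that is legitimate precisely under Assumption~\ref{channel_reg}. Recall that the filter kernel $\eta(\cdot|\pi,u)$ is built in two stages, as in \eqref{kernelFilter}: first one forms the predictive measure $\bar\pi(\cdot) = \int \mathcal{T}(\cdot|x,u)\pi(dx)$ and the induced measurement distribution $H(dy|\pi,u) = \int Q(dy|x')\bar\pi(dx')$, and then one applies the Bayesian update $F(\pi,u,y)$. The term involving the transition kernel, in Theorem~\ref{main1}, comes from comparing $\bar\pi$ (from $\mathcal{T}$) with $\bar\pi_n$ (from $\mathcal{T}_n$), while the term involving $Q$ comes from the Bayesian update step with $Q$ versus $Q_n$. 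The $Q$-contribution is left untouched; only the $\mathcal{T}$-contribution must be re-estimated.

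First I would isolate, from the Appendix~\ref{Pmain1} argument, the intermediate model $(\mathcal{T}, Q_n)$ (or $(\mathcal{T}_n, Q_n)$ depending on how the telescoping is organized) and write $d_{BL}(\eta,\eta^{\mathcal{T}_n,Q_n}) \le d_{BL}(\eta^{\mathcal{T},Q_n},\eta^{\mathcal{T}_n,Q_n}) + d_{BL}(\eta,\eta^{\mathcal{T},Q_n})$, so the second term is already bounded by $2\,d_{TV}(Q_n,Q)$ by the same computation as in Theorem~\ref{main1}. For the first term, the two belief kernels share the same channel $Q_n$ but differ in the transition kernel. The key step is to show that for a fixed $Q_n$ that is $L_Q$-Lipschitz in total variation, the map $\nu \mapsto$ (posterior obtained by predicting with a transition kernel and then updating by $Q_n$) transports a $W_1$-closeness of predictive measures into a bounded-Lipschitz closeness of posteriors, with the Lipschitz constant $L_Q$ of the channel providing the conversion factor between $W_1$ and $d_{TV}$. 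Concretely, for a test function $f$ with $\|f\|_\infty + \|f\|_L \le 1$ on $\mathcal{P}(\mathbb{X})$, one writes the difference of the two filter kernels as an integral against the respective $H$-measures of $f(F(\cdot))$, adds and subtracts to separate (a) the change in the posterior map due to the different predictor at a fixed $y$ and (b) the change in the averaging measure $H$; for (a) one uses that $Q_n(\cdot|x)$ being $L_Q$-Lipschitz in TV makes the Bayes map weakly/BL-continuous with modulus controlled by $L_Q$ and the $W_1$ distance of the predictors $\int \mathcal{T}(\cdot|x,u)\pi(dx)$ vs $\int \mathcal{T}_n(\cdot|x,u)\pi(dx)$, which in turn is at most $d_{W_1}(\mathcal{T}_n,\mathcal{T})$ by the dual (Kantorovich) formulation since $\pi$ is a probability measure and $1$-Lipschitz test functions are preserved under mixing; for (b) one bounds the total variation (hence BL) distance between $H(\cdot|\pi,u)$ and $H_n(\cdot|\pi,u)$, again using that $Q_n$ is Lipschitz in TV to pass from the $W_1$ proximity of the predictors to a $L_Q d_{W_1}$ bound.

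The main obstacle I anticipate is step (b) together with the bookkeeping of which constant multiplies what: passing from $W_1\bigl(\int \mathcal{T}\pi,\int \mathcal{T}_n\pi\bigr) \le d_{W_1}(\mathcal{T}_n,\mathcal{T})$ is clean by Kantorovich duality, but then establishing that $\|H - H_n\|_{TV} \le L_Q\, d_{W_1}(\mathcal{T}_n,\mathcal{T})$ requires using the Lipschitz-in-TV property of $Q_n$ in the form $\bigl|\int Q_n(B|x)\,\bar\pi(dx) - \int Q_n(B|x)\,\bar\pi_n(dx)\bigr|$, and here one needs $x\mapsto Q_n(B|x)$ to be Lipschitz (in $x$) with constant at most $L_Q$ uniformly in $B$, which is exactly the content of Assumption~\ref{channel_reg}, so one integrates against an optimal $W_1$-coupling of $\bar\pi,\bar\pi_n$. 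Once these two pieces are in hand the factor $2$ and the additive structure come out just as in Theorem~\ref{main1}, so I would expect the final bound $d_{BL}(\eta,\eta^{\mathcal{T}_n,Q_n}) \le 2\bigl(L_Q\, d_{W_1}(\mathcal{T}_n,\mathcal{T}) + d_{TV}(Q_n,Q)\bigr)$ to drop out after collecting terms; the care needed is to ensure no extra diameter or $\delta(Q)$ factors creep in, which they should not because we are only asking for BL-proximity of the filter kernels rather than the sharper $W_1$-proximity treated later in Theorems~\ref{W1main_1}–\ref{W1main_2}.
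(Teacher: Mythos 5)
Your proposal is correct and follows essentially the same route as the paper: triangle inequality through the intermediate model $(\mathcal{T},Q_n)$, keeping the $2\,d_{TV}(Q_n,Q)$ contribution from Theorem~\ref{main1}, and re-estimating the transition-kernel terms by observing that under Assumption~\ref{channel_reg} the relevant test functions $x\mapsto\int g(y)\,Q_n(dy\mid x)$ (for $\|g\|_\infty\le 1$) are $L_Q$-Lipschitz, so each appearance of $d_{TV}(\mathcal{T}_n,\mathcal{T})$ becomes $L_Q\,d_{W_1}(\mathcal{T}_n,\mathcal{T})$ via Kantorovich duality. This is precisely the content of the paper's Lemma~\ref{Py2} and its reworking of the bounds from the proof of Theorem~\ref{main1}(ii).
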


\subsection{Filter Kernel Proximities under the Wasserstein Metric}

Now we present results for the Wasserstein-1 distance on filter processes. We extend our results to the $W_1$ distance, assuming $\X$ is compact. First, we state the Wasserstein-1 version of Theorem \ref{main1}.

\begin{theorem}\label{W1main_1}
  Let $\X$ be a compact metric space with diameter $D:=\sup_{x,y \in X} d(x,y)$. Then:
 \[d_{W_1}(\eta,\eta^{{\cal T}_n,Q_n}) \leq (D/2+2) \left( d_{TV}(\mathcal{T}_n,\mathcal{T}) + d_{TV}(Q_n,Q)\right).\]
\end{theorem}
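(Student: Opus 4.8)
The plan is to bootstrap from the bounded-Lipschitz bound of Theorem~\ref{main1} by comparing the $W_1$ metric and the bounded-Lipschitz metric $\rho_{BL}$ on the compact base space $\mathbb{X}$. Since $(\mathbb{X},d)$ is compact with diameter $D$, any $1$-Lipschitz function $f:\mathbb{X}\to\mathbb{R}$ can, after subtracting a constant (which does not affect integrals against a difference of probability measures), be taken to satisfy $\|f\|_\infty \le D/2$; hence $f/(D/2+1)$ lies in $\operatorname{BL}(\mathbb{X})$ because $\|f/(D/2+1)\|_\infty + \|f/(D/2+1)\|_L \le (D/2)/(D/2+1) + 1/(D/2+1) = 1$. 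Taking the supremum over such $f$ gives the elementary inequality $W_1(\mu,\nu) \le (D/2+1)\,\rho_{BL}(\mu,\nu)$ for all $\mu,\nu\in\mathcal P(\mathbb{X})$. This is the key quantitative comparison; the rest is a matter of lifting it one level up, from $\mathcal P(\mathbb{X})$ to $\mathcal P(\mathcal P(\mathbb{X}))$.

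Next I would argue that the same comparison holds at the level of the belief space: since $(\mathbb{X},d)$ is compact, $(\mathcal P(\mathbb{X}),W_1)$ is a compact metric space, and its diameter is at most $D$ as well (because for any coupling the transport cost is at most $D$). Therefore, applying the same elementary inequality to probability measures on the space $(\mathcal P(\mathbb{X}),W_1)$, one obtains for any $\Phi,\Psi\in\mathcal P(\mathcal P(\mathbb{X}))$ that $W_1(\Phi,\Psi)\le (D/2+1)\,\rho_{BL}(\Phi,\Psi)$, where on the right $\rho_{BL}$ is computed with respect to the base metric $W_1$ on $\mathcal P(\mathbb{X})$. Applying this with $\Phi=\eta(\cdot\mid\mu,u)$ and $\Psi=\eta^{{\cal T}_n,Q_n}(\cdot\mid\mu,u)$ and taking the supremum over $(\mu,u)$, I get
\[
d_{W_1}(\eta,\eta^{{\cal T}_n,Q_n}) \le (D/2+1)\, d_{BL}(\eta,\eta^{{\cal T}_n,Q_n}),
\]
where the $d_{BL}$ on the right is the uniform bounded-Lipschitz distance between the belief kernels (with $\mathcal P(\mathbb{X})$ carrying the $W_1$ metric). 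Combining with Theorem~\ref{main1}, which gives $d_{BL}(\eta,\eta^{{\cal T}_n,Q_n}) \le 2\left(d_{TV}(\mathcal T_n,\mathcal T)+d_{TV}(Q_n,Q)\right)$, yields the factor $(D/2+1)\cdot 2 = D+2$; to land exactly on the stated constant $(D/2+2)$ one should instead bound $\|f\|_\infty \le D/2$ and normalize by $D/2$ alone after noting that a $1$-Lipschitz $f$ shifted to have zero mean (or zero value at a fixed point) on a diameter-$D$ space need only be divided by $\max(1,D/2)$ — I would check the constant bookkeeping carefully to reproduce $(D/2+2)$ as claimed, possibly by keeping $\|f\|_L\le 1$ and $\|f\|_\infty\le D/2$ separately so that $f$ itself is the sum of a function in $(D/2)\cdot\operatorname{T}(\mathbb{X})$-scale and leads to a bound $d_{W_1}\le (D/2)\,d_{TV}(\cdots)+ 2\,(\cdots)$-type split matching Theorem~\ref{main1}'s proof directly rather than through its final statement.

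The main obstacle I anticipate is twofold. First, there is a compatibility question: Theorem~\ref{main1} is proved with $\mathcal P(\mathbb{X})$ implicitly carrying some metric (the problem setup mentions the Prokhorov metric for the Borel structure, but the Lipschitz seminorm in $\operatorname{BL}(\mathcal P(\mathbb{X}))$ must be taken with respect to a chosen metric on $\mathcal P(\mathbb{X})$); I must make sure the metric used on $\mathcal P(\mathbb{X})$ in the definition of $d_{BL}(\eta,\cdot)$ is consistent with — or comparable to — the $W_1$ metric used in the target $d_{W_1}(\eta,\cdot)$, so that the comparison inequality actually applies. If Theorem~\ref{main1}'s $d_{BL}$ uses $W_1$ as the ground metric on $\mathcal P(\mathbb{X})$, the argument goes through cleanly; otherwise one needs the (true, on compact $\mathbb{X}$) topological equivalence of Prokhorov and $W_1$, but that only gives topological, not Lipschitz, comparability, so I would want the cleaner route of re-examining the proof of Theorem~\ref{main1} to confirm it is stated with the $W_1$ ground metric. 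Second, tracking the exact constant $(D/2+2)$ rather than a crude $(D+2)$ requires going into the proof of Theorem~\ref{main1} and applying the $1$-Lipschitz test function directly there, splitting its contribution into a $\|f\|_L$-part (which pairs with $d_{W_1}$-type estimates internally, but here is bounded by the $d_{TV}$ terms via $W_1\le D\cdot d_{TV}$-type inequalities) and a $\|f\|_\infty\le D/2$-part — so the cleanest honest proof probably mirrors the Appendix~\ref{Pmain1} argument with $f\in\operatorname{W}(\mathbb{X})$ in place of $f\in\operatorname{BL}(\mathbb{X})$ and carries the diameter bound through, which I would do rather than quoting Theorem~\ref{main1} as a black box.
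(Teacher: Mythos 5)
Your final route is exactly the paper's proof: the paper reuses inequality (\ref{rhobl}) from the proof of Theorem~\ref{main1} with the test class $\operatorname{W}(\mathcal{P}(\mathbb{X}))$ in place of $\operatorname{BL}(\mathcal{P}(\mathbb{X}))$, notes that the $W_1$-diameter of $\mathcal{P}(\mathbb{X})$ equals $D$ so that one may take $\|f\|_\infty \le D/2$ after subtracting a constant, and concludes $\|f\|_\infty + 2\|f\|_L \le D/2 + 2$, finishing with the triangle inequality through the intermediate model $(\mathcal{T},Q_n)$. Your initial black-box comparison $W_1 \le (D/2+1)\,\rho_{BL}$ is correct but, as you yourself observe, only yields the weaker constant $D+2$; your decision to instead rerun the Appendix~\ref{Pmain1} argument with $1$-Lipschitz test functions, tracking $\|f\|_\infty$ and $\|f\|_L$ separately, is precisely what the paper does.
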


Next, we present the Wasserstein-1 version of Theorem \ref{main2}.

\begin{theorem}\label{W1main_2}
Let $\X$ be compact with diameter $D$. Under Assumption \ref{channel_reg}, we have,
\[d_{W_1}(\eta,\eta^{{\cal T}_n,Q_n}) \leq (D/2+2) \left( L_Q d_{W_1}\left(\mathcal{T}_n,\mathcal{T}\right) + d_{TV}(Q_n,Q)\right).\]
 
\end{theorem}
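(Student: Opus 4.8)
\textbf{Proof proposal for Theorem~\ref{W1main_2}.}

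The plan is to mirror the proof of Theorem~\ref{W1main_1}, but feed in the sharper transition-kernel estimate coming from Assumption~\ref{channel_reg} in place of the crude total-variation bound. Concretely, I expect that the proof of Theorem~\ref{W1main_1} proceeds by fixing $(\mu,u)\in\PP(\X)\times\U$ and a $1$-Lipschitz test function $f$ on $\PP(\X)$, writing the difference $\int f(\nu)\,\eta(d\nu\mid\mu,u) - \int f(\nu)\,\eta^{{\cal T}_n,Q_n}(d\nu\mid\mu,u)$ through the disintegration in \eqref{kernelFilter}, and splitting it into (a) a term measuring the discrepancy between the predictive measures $H(\cdot\mid\mu,u)$ and $H_n(\cdot\mid\mu,u)$ on $\sY$, and (b) a term measuring, for a common $y$, the discrepancy between the Bayesian updates $F(\mu,u,y)$ and $F_n(\mu,u,y)$ in $\PP(\X)$ (this second piece is where the diameter $D$ enters, since $W_1$ on a set of diameter $D$ is controlled by $\tfrac{D}{2}$ times total variation). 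The factor $(D/2+2)$ in Theorem~\ref{W1main_1} then arises as $D/2$ from the belief-update term plus $2$ (coming via the bounded-Lipschitz bound of Theorem~\ref{main1}) from the change-of-predictive-measure term.

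The key step is therefore to track exactly where the summand $d_{TV}(\mathcal{T}_n,\mathcal{T})$ was used in establishing Theorem~\ref{W1main_1} and replace it by $L_Q\,d_{W_1}(\mathcal{T}_n,\mathcal{T})$, exactly as the passage from Theorem~\ref{main1} to Theorem~\ref{main2} does for the bounded-Lipschitz metric. The mechanism is that whenever the transition kernels are integrated against a function that is only revealed through the observation channel — i.e. composed with the Bayes operator which involves the densities of $Q$ or $Q_n$ — Assumption~\ref{channel_reg} upgrades total-variation Lipschitz regularity of the channel to a Lipschitz bound with constant $L_Q$ with respect to the base metric $d$ on $\X$; this lets one bound the relevant integral difference by $L_Q\,d_{W_1}(\mathcal{T}_n,\mathcal{T})$ rather than $d_{TV}(\mathcal{T}_n,\mathcal{T})$. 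Meanwhile the observation-kernel mismatch term, which only involves $Q$ and $Q_n$ directly (not through an integral against $\mathcal{T}$), is unaffected and still contributes $d_{TV}(Q_n,Q)$. Since the geometric constants multiplying these two contributions are the purely metric constant $(D/2+2)$ already obtained in Theorem~\ref{W1main_1}, the claimed bound
\[
d_{W_1}(\eta,\eta^{{\cal T}_n,Q_n}) \leq (D/2+2)\left( L_Q\, d_{W_1}(\mathcal{T}_n,\mathcal{T}) + d_{TV}(Q_n,Q)\right)
\]
follows.

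The main obstacle I anticipate is bookkeeping rather than conceptual: one must carefully verify that every appearance of $d_{TV}(\mathcal{T}_n,\mathcal{T})$ in the Theorem~\ref{W1main_1} argument sits behind an integral against a test function whose Lipschitz constant (in $d$) is controlled by $L_Q$ once the channel's total-variation regularity is invoked — in particular confirming that the normalization in the Bayes update does not spoil this, which is precisely the point already handled in the proof of Theorem~\ref{main2}. Given that both Theorem~\ref{W1main_1} and Theorem~\ref{main2} are already in hand, the cleanest exposition is to present this as a short argument that combines the $W_1$-to-TV geometry of Theorem~\ref{W1main_1} with the channel-regularity substitution of Theorem~\ref{main2}, rather than re-deriving everything from scratch; the detailed computation is deferred to Appendix~\ref{W1sect}.
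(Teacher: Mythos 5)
Your proposal matches the paper's proof: the paper establishes Theorem~\ref{W1main_2} exactly by rerunning the argument of Theorem~\ref{W1main_1} (which gives the bound $\sup_f(\|f\|_\infty+2\|f\|_L)$ times the kernel distances, with $\|f\|_\infty\le D/2$ for a $1$-Lipschitz $f$ on the diameter-$D$ belief space) and replacing each occurrence of $d_{TV}(\mathcal{T}_n,\mathcal{T})$ by $L_Q\,d_{W_1}(\mathcal{T}_n,\mathcal{T})$ via the channel-regularity mechanism of Theorem~\ref{main2}, namely the $L_Q$-Lipschitzness of $x\mapsto\int g(y)\,Q_n(dy\mid x)$. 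The only slip is cosmetic: in the paper's bookkeeping the $D/2$ multiplies the predictive-measure (total-variation) discrepancy on $\mathbb{Y}$, while the $2$ comes from the belief-update term ($2\|f\|_L$) --- the reverse of your attribution --- but this does not affect the final bound.
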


\subsection{Proofs of the results in this section}

We begin with the following lemma:

\begin{lemma}\label{Py}
Let $z_0 \in \mathcal{P}(\mathbb{X})$. Then, we have:
$\left\|P^{{\cal T}_n,Q_n}(dy_1 \mid z_0,u) - P(dy_1 \mid z_0, u)\right\|_{TV}$
    $ \leq  d_{TV}(Q_n, Q)+ d_{TV}(\T_n, \T)$.
\end{lemma}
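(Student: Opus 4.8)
The plan is to write the one-step observation predictive laws explicitly and compare them through an intermediate ("hybrid") model. Recall from the belief-MDP construction that, for an initial belief $z_0$ and action $u$, the predictive law of the next observation is
\[
P(dy_1 \mid z_0, u) = \int_{\mathbb{X}} \int_{\mathbb{X}} Q(dy_1 \mid x_1)\, \mathcal{T}(dx_1 \mid x_0, u)\, z_0(dx_0),
\]
and $P^{{\cal T}_n,Q_n}(dy_1 \mid z_0,u)$ is obtained by replacing $\mathcal{T}$ with $\mathcal{T}_n$ and $Q$ with $Q_n$. I would introduce the hybrid predictive
\[
\widetilde{P}(dy_1 \mid z_0, u) := \int_{\mathbb{X}} \int_{\mathbb{X}} Q(dy_1 \mid x_1)\, \mathcal{T}_n(dx_1 \mid x_0, u)\, z_0(dx_0),
\]
which uses the approximate transition kernel but the true measurement kernel, and then apply the triangle inequality
\[
\bigl\| P^{{\cal T}_n,Q_n}(\cdot \mid z_0,u) - P(\cdot \mid z_0,u) \bigr\|_{TV} \le \bigl\| P^{{\cal T}_n,Q_n}(\cdot \mid z_0,u) - \widetilde{P}(\cdot \mid z_0,u) \bigr\|_{TV} + \bigl\| \widetilde{P}(\cdot \mid z_0,u) - P(\cdot \mid z_0,u) \bigr\|_{TV}.
\]

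For the first term, I would fix a test function $g$ with $\|g\|_\infty \le 1$, express the difference of integrals as $\int \int \bigl(\int g(y_1)\,[Q_n(dy_1\mid x_1) - Q(dy_1\mid x_1)]\bigr)\, \mathcal{T}_n(dx_1\mid x_0,u)\, z_0(dx_0)$, move the absolute value inside both integrals, and recognize the inner quantity as bounded by $\|Q_n(\cdot\mid x_1) - Q(\cdot\mid x_1)\|_{TV} \le d_{TV}(Q_n,Q)$; since $z_0$ and $\mathcal{T}_n(\cdot\mid x_0,u)$ are probability measures, this term is at most $d_{TV}(Q_n,Q)$. For the second term, I would again fix $g$ with $\|g\|_\infty \le 1$ and observe that $\phi_g(x_1) := \int g(y_1)\, Q(dy_1\mid x_1)$ is measurable with $\|\phi_g\|_\infty \le 1$; the difference then reduces to $\int \bigl(\int \phi_g(x_1)\,[\mathcal{T}_n(dx_1\mid x_0,u) - \mathcal{T}(dx_1\mid x_0,u)]\bigr) z_0(dx_0)$, which is bounded by $\sup_{x_0,u}\|\mathcal{T}_n(\cdot\mid x_0,u) - \mathcal{T}(\cdot\mid x_0,u)\|_{TV} = d_{TV}(\mathcal{T}_n,\mathcal{T})$. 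Summing the two bounds gives the claim.

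There is no serious obstacle; the argument is essentially bookkeeping with the definition of the uniform total-variation metric on kernels in \eqref{uniform_metric}. The only points requiring a little care are (i) the measurability of $\phi_g$ and of the various iterated integrands, which follows from standard properties of stochastic kernels, and (ii) the choice of the hybrid kernel $\widetilde{P}$, which is made precisely so that the transition-kernel comparison is carried out against a test function ($\phi_g$) that is automatically bounded by $1$, allowing the bound $d_{TV}(\mathcal{T}_n,\mathcal{T})$ to be invoked directly.
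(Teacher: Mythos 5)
Your proof is correct and follows essentially the same route as the paper's: both arguments insert a hybrid model that changes one kernel at a time, apply the triangle inequality, and bound each term by pushing a test function with $\|g\|_\infty\le 1$ through the unchanged kernel (yielding $\phi_g$, resp.\ $h_n$, still bounded by $1$) so that the uniform total-variation metric on kernels applies directly. The only difference is the choice of hybrid --- you use $(\mathcal{T}_n,Q)$ while the paper uses $(\mathcal{T},Q_n)$ --- which is an immaterial, symmetric variation.
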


\begin{proof}
First we define an intermediate model determined by \((\T, Q_n)\), leading to a filter kernel $\eta^{{\cal T},Q_n}$. This, given a fixed prior, induces the measure $P^{{\cal T},Q_n}$ on ${\cal B}(\mathbb{X}^{\mathbb{Z}_+} \times \mathbb{Y}^{\mathbb{Z}_+})$.
First we prove that 
\begin{itemize}
    \item[(i)] $\left\|P^{{\cal T},Q_n}(dy_1 \mid z_0) - P(dy_1 \mid z_0)\right\|_{TV} \leq d_{TV}(Q_n, Q)$.
    \item[(ii)] $\left\|P^{{\cal T},Q_n}(dy_1 | z_0) - P^{{\cal T}_n,Q_n}(dy_1 | z_0)\right\|_{TV}$ 
    $\leq d_{TV}(\T_n, \T)$
\end{itemize}
For simplicity, we omit the explicit dependence on the control action $u$ in the proof, since $u$ is fixed and does not affect the argument.

    \begin{itemize}
        \item[(i)] Note that \begin{align}
& \normt{P^{{\cal T},Q_n}\left(d y_1 \mid z_0\right)-P\left(d y_1 \mid z_0\right)} \\
&= \sup _{\norm{g}_\infty \leq 1}\left[\int z_0\left(d x_0\right) T\left(d x_1 \mid x_0\right) Q_n\left(d y_1 | {x_1}\right) g\left(y_1\right) -\int z_0\left(d x_0\right) T\left(d x_1 \mid x_0\right) Q\left(d{y_1} \mid x_1\right) g (y_1) \right] \nonumber\\
&= \sup _{\norm{g}_\infty \leq 1}\left[\int z_0\left(d x_0\right) T\left(d x_1 \mid x_0\right) h_n(x_1) -\int z_0\left(d x_0\right) T\left(d x_1 \mid x_0\right) h(x_1), \right]
\end{align}
where $h_n\left(x_1\right)=\int Q_n\left(d{y_1}| {x_1}\right) g\left(y_1\right)$ and $h\left(x_1\right)=\int Q\left(d{y_1} \mid x_1\right) g (y_1)$.
Now observe that:
$$ h_n\left(x_1\right)-h\left(x_1\right) \leq d_{TV}(Q_n, Q)$$ by definition.
Thus:
\begin{align}\label{Q_b}
& \normt{P^{{\cal T},Q_n}\left(d y_1 \mid z_0\right)-P\left(d y_1 \mid z_0\right)} \nonumber\\
&= \sup _{\norm{g}_\infty \leq 1}\left[\int z_0\left(d x_0\right) T\left(d x_1 \mid x_0\right) h_n(x_1)-\int z_0\left(d x_0\right) T\left(d x_1 \mid x_0\right) h(x_1) \right] \leq d_{TV}(Q_n, Q).
\end{align}
        \item[(ii)]
        Similarly, note that:
\begin{align}
& \normt{P^{{\cal T},Q_n}\left(d y_1 \mid z_0\right)-P^{{\cal T}_n,Q_n}\left(d y_1 \mid z_0\right)} \\
&= \sup _{\norm{g}_\infty\leq 1}\left[\int z_0\left(d x_0\right) T\left(d x_1 \mid x_0\right) Q_n\left(d y_1 | {x_1}\right) g\left(y_1\right) -\int z_0\left(d x_0\right) T_n\left(d x_1 \mid x_0\right) Q_n\left(d{y_1} \mid x_1\right) g (y_1) \right] \nonumber\\
&= \sup _{\norm{g}_\infty\leq 1}\left[\int z_0\left(d x_0\right) T_n\left(d x_1 \mid x_0\right) h_n(x_1) -\int z_0\left(d x_0\right) T\left(d x_1 \mid x_0\right) h_n(x_1) \right]
\end{align}
where $h_n\left(x_1\right)=\int Q_n\left(d{y_1}| {x_1}\right) g\left(y_1\right)$.
Observe that:
$$ \int  T_n\left(d x_1 \mid x_0\right) h_n(x_1)- \int T\left(d x_1 \mid x_0\right) h_n(x_1)  \leq d_{TV}(T_n, T)$$ 
by definition.
Hence:
\begin{align}
& \normt{P^{{\cal T},Q_n}\left(d y_1 \mid z_0\right)-P^{{\cal T}_n,Q_n}\left(d y_1 \mid z_0\right)} \nonumber\\
&= \sup _{\norm{g}_\infty \leq 1} \left[ \int z_0\left(d x_0\right) T_n\left(d x_1 \mid x_0\right) h_n(x_1) - \int z_0\left(d x_0\right) T\left(d x_1 \mid x_0\right) h_n(x_1) \right] \leq d_{TV}(T_n, T)
\end{align}
    \end{itemize}
And proof follows from the triangle inequality.
\end{proof}

\textbf{Proof of Theorem \ref{main2}.} 
First we prove that
\begin{itemize}
    \item[(i)] $d_{BL}(\eta,\eta^{{\cal T},Q_n}) \leq 2 d_{TV}(Q_n,Q).$
    \item[(ii)] $d_{BL}(\eta^{{\cal T},Q_n},\eta^{{\cal T}_n,Q_n}) \leq 2  d_{TV}(T_n,T).$
\end{itemize}
    (i)
    First, consider:  \begin{align}\label{i}
d_{BL}(\eta,\eta^{{\cal T},Q_n})= \sup_{z_0\in \Z} \sup_{f \in \operatorname{BL}(\Z)} \left|\int_{{\cal P}(\mathbb{X})} f\left(z_1\right) \eta\left(d z_1 \mid z_0\right)-\int_{{\cal P}(\mathbb{X})} f\left(z_1\right) \eta^{{\cal T},Q_n}\left(d z_1 \mid z_0\right)\right|.\end{align}
For a fixed \( f \) such that \( \norm{f}_\infty + \norm{f}_L \leq 1 \), we can write:
\begin{align}
&
\left|\int_{{\cal P}(\mathbb{X})} f\left(z_1\right) \eta^{{\cal T},Q_n}\left(d z_1 \mid z_0\right)-\int_{{\cal P}(\mathbb{X})} f\left(z_1\right) \eta\left(d z_1 \mid z_0\right)\right| \nonumber\\
& =\left|\int_{\mathbb{Y}} f\left(z^1_1\left(z_0, y_1\right)\right) P^{{\cal T},Q_n}\left(d y_1 \mid z_0\right)-\int_{\mathbb{Y}} f\left(z_1\left(z_0,  y_1\right)\right) P\left(d y_1 \mid z_0\right)\right|\nonumber \\
&\leq \left|\int_{\mathbb{Y}} f\left(z^1_1\left(z_0^{ }, y_1\right)\right) P^{{\cal T},Q_n}\left(d y_1 \mid z_0^{ }\right)-\int_{\mathbb{Y}} f\left(z_1^1\left(z_0^{ }, y_1\right)\right) P\left(d y_1 \mid z_0\right)\right| \nonumber\\
&+ \int_{\mathbb{Y}}\left|f\left(z^1_1\left(z_0^{ }, y_1\right)\right)-f\left(z_1\left(z_0, y_1\right)\right)\right| P\left(d y_1 \mid z_0\right) \nonumber\\
&\leq \norm{f}_\infty \left\|P^{{\cal T},Q_n}\left(dy_1 \mid z_0^{ } \right)-P\left(dy_1 \mid z_0\right)\right\|_{T V}\label{prob_dist}\\
&+ \int_{\mathbb{Y}}\left|f\left(z_1^1\left(z_0^{ }, y_1\right)\right)-f\left(z_1\left(z_0, y_1\right)\right)\right| P\left(d y_1 \mid z_0\right), \label{second}
\end{align}
where $z_1\left(z_0, y\right):=P(X_1\in . | Z_0=z_0, Y_1=y_1)$ and $z^1_1\left(z_0, y\right):=P^{{\cal T},Q_n}(X_1\in . | Z_0=z_0, Y_1=y_1)$.
By Lemma \ref{Py}, we have:
\begin{align}\label{TV1}
&\normt{P^{{\cal T},Q_n}\left(d y_1 \mid z_0\right)-P\left(d y_1 \mid z_0\right)} \leq d_{TV}(Q_n,Q). 
\end{align}
Now we can analyze the second term in (\ref{second}). Using the Lipschitz property of \( f \), we can write:
\begin{align}\label{eq3}
&\int_{\mathbb{Y}}\left|f\left(z_1^1\left(z_0^{ }, y_1\right)\right)-f\left(z_1\left(z_0, y_1\right)\right)\right| P\left(d y_1 \mid z_0\right)\nonumber\\
&\leq \norm{f}_L \int_{\mathbb{Y}}\rho_{BL} (z_1\left(z_0, y_1\right), z_1\left(z_0, y_1\right)) P\left(d y_1 \mid z_0\right)\nonumber\\
&=\norm{f}_L \int_{\mathbb{Y}} \sup_{g \in \operatorname{BL}_1(\mathbb{X})}\left(\int_{\mathbb{X}} g(x_1)z_1^1\left(z_0, y_1\right)(dx_1) - \int_{\mathbb{X}} g(x_1)z_1\left(z_0, y_1\right)(dx_1)\right)P\left(d y_1 \mid z_0\right)
\end{align}
If we examine the term inside, we have:
\begin{align}\label{inside}
&\sup_{g \in \operatorname{BL}_1(\mathbb{X})}\left(\int_{\mathbb{X}} g(x_1)z_1^1\left(z_0 , y_1\right)(dx_1) - \int_{\mathbb{X}} g(x_1)z_1\left(z_0, y_1\right)(dx_1)\right)\\
&=\sup_{g \in \operatorname{BL}_1(\mathbb{X})}\left(\int_{\mathbb{X}} g(x_1)(z_1^1 \left(z_0 , y_1\right)- z_1\left(z_0, y_1\right))(dx_1)\right)\nonumber\\
&=\sup_{g \in \operatorname{BL}_1(\mathbb{X})}\left(\int_{\mathbb{X}} g(x_1)w_{y_1}(dx_1)\right),\nonumber
\end{align}
where $w_{y_1}=(z_1^1\left(z_0 , y_1\right)- z_1\left(z_0, y_1\right))$ 
which is a signed measure on $\mathbb{X}$. 
The set $\operatorname{BL}_1(\mathbb{X})$ is closed, uniformly bounded and equicontinuos with respect to the sup-norm topology, 
so by the Arzela-Ascoli theorem $\operatorname{BL}_1(\mathbb{X})$ is compact. 
Since a continuous function on a compact set attains its supremum,
 the set
$$
A_y:=\left\{h_y(x)=\arg\sup_{g \in \operatorname{BL}_1(\mathbb{X})}\left(\int_{\mathbb{X}} g(x)w_{y}(dx)\right):h_y(x)\in \operatorname{BL}_1(\mathbb{X})\right\}
$$
is nonempty for every $y\in \mathbb{Y}$. Moreover, the integral is continuous with respect to the sup-norm, i.e., 
$$ 
\left|\int_{\mathbb{X}} g(x)w_{y}(dx)-\int_{\mathbb{X}} h(x)w_{y}(dx)\right|\leq\norm{g-h}_\infty \quad \forall g,h\in \operatorname{BL}_1(\mathbb{X}). 
$$
Then, $A_y$ is a closed set under sup-norm.
$\mathbb{Y}$ and $\operatorname{BL}_1(\mathbb{X})$ are Polish spaces, and define $\Gamma=\{(y,A_y), y \in \mathbb{Y}\}$. 
$A_y$ is closed for each $y\in \mathbb{Y}$ and $\Gamma$ is Borel measurable. 
So there exists a measurable function $h:\mathbb{Y}\to \operatorname{BL}_1(\mathbb{X})$ 
such that $h(y)\in A_y$ for all $y \in \mathbb{Y}$ 
by a measurable selection theorem\footnote{[\cite{himmelberg1976optimal}, Theorem 2][Kuratowski Ryll-Nardzewski Measurable Selection Theorem]
Let $\mathbb{X}, \mathbb{Y}$ be Polish spaces and $\Gamma=\{(x, \psi(x)): x\in \mathbb{X}\}$ where $\psi(x) \subset \mathbb{Y}$ be such that, $\psi(x)$ is closed for each $x \in \mathbb{X}$ and let $\Gamma$ be a Borel measurable set in $\mathbb{X} \times \mathbb{Y}$. Then, there exists at least one measurable function $f: \mathbb{X} \rightarrow \mathbb{Y}$ such that $\{(x, f(x)), x \in \mathbb{X}\} \subset \Gamma$.
}. 
Let us define $g_y=h(y)$. Using this, we can proceed with Equation (\ref{eq3}):
\begin{align}
&\int_{\mathbb{Y}} \sup_{g \in \operatorname{BL}_1(\mathbb{X})}\left(\int_{\mathbb{X}} g(x_1)z_1^1\left(z_0 , y_1\right)(dx_1) - \int_{\mathbb{X}} g(x_1)z_1\left(z_0, y_1\right)(dx_1)\right)P\left(d y_1 \mid z_0\right)\nonumber\\
&=\int_{\mathbb{Y}} \left(\int_{\mathbb{X}} g_{y_1}(x_1)z_1^1\left(z_0 , y_1\right)(dx_1) - \int_{\mathbb{X}} g_{y_1}(x_1)z_1\left(z_0, y_1\right)(dx_1)\right) P\left(d y_1 \mid z_0\right)\label{inside2}\\
&=\int_{\mathbb{Y}} \int_{\mathbb{X}} g_{y_1}(x_1)z_1^1\left(z_0 , y_1\right)(dx_1) P(d y_1 \mid z_0)-
\int_{\mathbb{Y}}\int_{\mathbb{X}} g_{y_1}(x_1)z_1^1\left(z_0, y_1\right)(dx_1) P^{{\cal T},Q_n}(d y_1 \mid z_0)\nonumber\\
&+\int_{\mathbb{Y}} \int_{\mathbb{X}} g_{y_1}(x_1)z_1^1\left(z_0 , y_1\right)(dx_1) P^{{\cal T},Q_n}(d y_1 \mid z_0)-
\int_{\mathbb{Y}}\int_{\mathbb{X}} g_{y_1}(x_1)z_1\left(z_0, y_1\right)(dx_1) P(d y_1 \mid z_0 )\label{gy1}
\end{align}
For the first term, we know:
\(
\int_{\mathbb{X}} g_{y_1}(x_1)z_1\left(z_0 , y_1\right)(dx_1) \in \operatorname{T}(\Y) 
\)
and therefore:
\begin{align}\label{sup-1}
&=\int_{\mathbb{Y}} \int_{\mathbb{X}} g_{y_1}(x_1)z_1^1\left(z_0 , y_1\right)(dx_1) P(d y_1 \mid z_0)-
\int_{\mathbb{Y}}\int_{\mathbb{X}} g_{y_1}(x_1)z_1^1\left(z_0, y_1\right)(dx_1) P^{{\cal T},Q_n}(d y_1 \mid z_0)\nonumber\\
& \leq \normt{P^{{\cal T},Q_n}\left(d y_1 \mid z_0\right)-P\left(d y_1 \mid z_0\right)} \leq d_{TV}(Q_n,Q).  
\end{align}
For the second term,  we can write by smoothing:
\begin{align}\label{sup-2}
&\int_{\mathbb{Y}} \int_{\mathbb{X}} g_{y_1}(x_1)z_1^1\left(z_0 , y_1\right)(dx_1) P^{{\cal T},Q_n}(d y_1 \mid z_0)-
\int_{\mathbb{Y}}\int_{\mathbb{X}} g_{y_1}(x_1)z_1\left(z_0, y_1\right)(dx_1) P(d y_1 \mid z_0 ) \nonumber \\
&=\int_{\mathbb{X}} \int_{\mathbb{Y}} g_{y_1}(x_1)Q_n\left(d y_1 \mid x_1\right) \mathcal{T}\left(d x_1 \mid z_0^{ }\right)-\int_{\mathbb{X}} \int_{\mathbb{Y}} g_{y_1}(x_1) Q\left(d y_1 \mid x_1\right) \mathcal{T}\left(d x_1 \mid z_0\right)\nonumber\\
&=\int_{\mathbb{X}} \omega_n(x_1) \mathcal{T}\left(d x_1 \mid z_0^{ }\right)-\int_{\mathbb{X}}\omega(x_1) \mathcal{T}\left(d x_1 \mid z_0\right)
\end{align}
where $\omega_n(x_1)=\int_{\mathbb{Y}} g_{y_1}(x_1)Q_n\left(d y_1 \mid x_1\right)$ and $ \omega(x_1)=\int_{\mathbb{Y}} g_{y_1}(x_1)Q\left(d y_1 \mid x_1\right).$
The first equality follows from (\ref{kernelFilter}) and an application of Fubini's theorem, since both integrals are bounded by $1$.  
Moreover, from the above we obtain 
\(
\omega_n(x_1) - \omega(x_1) \;\leq\; d_{TV}(Q_n, Q).
\)
Returning to the term in (\ref{sup-2}), we have:
\begin{align}\label{w_s}
&\int_{\mathbb{X}} \omega_n(x_1) \mathcal{T}\left(d x_1 \mid z_0^{ }\right)-\int_{\mathbb{X}}\omega(x_1) \mathcal{T}\left(d x_1 \mid z_0\right) \leq d_{TV}(Q_n,Q).
\end{align}
Using the bounds from (\ref{eq3}), (\ref{gy1}), (\ref{sup-1}), (\ref{sup-2}), and (\ref{w_s}), we obtain:
\begin{align}\label{last2}
& \int_{\mathbb{Y}}\left|f\left(z_1^1\left(z_0^{ }, y_1\right)\right)-f\left(z_1\left(z_0, y_1\right)\right)\right| P\left(d y_1 \mid z_0\right)  \leq 2 \norm{f}_L d_{TV}(Q_n,Q).
\end{align}
Finally, combining this with (\ref{i}), (\ref{second}), (\ref{TV1}), and (\ref{last2}), we conclude:
\begin{align}\label{rhobl}
    d_{TV}(\eta,\eta^{{\cal T},Q_n}) \leq \sup_{f\in \operatorname{BL}(\Z)} (\norm{f}_\infty + 2 \norm{f}_L) d_{TV}(Q_n,Q)\leq 2 d_{TV}(Q_n,Q).
\end{align}

(ii) Let us now analyze the case where the observation kernels remain the same, but the transition kernels differ. 

Starting with:  \begin{align}\label{ii}
    d_{BL}(\eta^{{\cal T}_n,Q_n},\eta^{{\cal T},Q_n})= \sup_{z_0\in \Z} \sup_{f \in \operatorname{BL}(\Z)} \left|\int_{{\cal P}(\mathbb{X})} f\left(z_1\right) \eta^{{\cal T}_n,Q_n}\left(d z_1 \mid z_0\right)-\int_{{\cal P}(\mathbb{X})} f\left(z_1\right) \eta^{{\cal T},Q_n}\left(d z_1 \mid z_0\right)\right|.\end{align}
    For a fixed \( f \) such that \( \norm{f}_\infty + \norm{f}_L \leq 1 \), following an argument similar to inequality (\ref{second}), we obtain:
    \begin{align}
    &
    \left|\int_{{\cal P}(\mathbb{X})} f\left(z_1\right) \eta^{{\cal T}_n,Q_n}\left(d z_1 \mid z_0\right)-\int_{{\cal P}(\mathbb{X})} f\left(z_1\right) \eta^{{\cal T},Q_n}\left(d z_1 \mid z_0\right)\right| \nonumber\\
    & =\left|\int_{\mathbb{Y}} f\left(z^2_1\left(z_0, y_1\right)\right) P^{{\cal T}_n,Q_n}\left(d y_1 \mid z_0\right)-\int_{\mathbb{Y}} f\left(z_1^1\left(z_0,  y_1\right)\right) P^{{\cal T},Q_n}\left(d y_1 \mid z_0\right)\right|\nonumber \\
    &\leq \left|\int_{\mathbb{Y}} f\left(z^2_1\left(z_0^{ }, y_1\right)\right) P^{{\cal T}_n,Q_n}\left(d y_1 \mid z_0^{ }\right)-\int_{\mathbb{Y}} f\left(z_1^2\left(z_0^{ }, y_1\right)\right) P^{{\cal T},Q_n}\left(d y_1 \mid z_0\right)\right| \nonumber\\
    &+ \int_{\mathbb{Y}}\left|f\left(z^2_1\left(z_0^{ }, y_1\right)\right)-f\left(z_1^1\left(z_0, y_1\right)\right)\right| P^{{\cal T},Q_n}\left(d y_1 \mid z_0\right) \nonumber\\
    &\leq \norm{f}_\infty \left\|P^{{\cal T}_n,Q_n}\left(dy_1 \mid z_0^{ } \right)-P^{{\cal T},Q_n}\left(dy_1 \mid z_0\right)\right\|_{T V}\\
    &+ \int_{\mathbb{Y}}\left|f\left(z_1^2\left(z_0^{ }, y_1\right)\right)-f\left(z_1^1\left(z_0, y_1\right)\right)\right| P^{{\cal T},Q_n}\left(d y_1 \mid z_0\right), \label{secondi}
    \end{align}
    where $z_1^1\left(z_0, y\right):=P^{{\cal T},Q_n}(X_1\in . | Z_0=z_0, Y_1=y_1)$ and $z^2_1\left(z_0, y\right):=P^{{\cal T}_n,Q_n}(X_1\in . | Z_0=z_0, Y_1=y_1)$.
    For the first term, using Lemma \ref{Py}, we have:
    \begin{align}\label{TV1i}
    &\normt{P^{{\cal T}_n,Q_n}\left(d y_1 \mid z_0\right)-P^{{\cal T},Q_n}\left(d y_1 \mid z_0\right)} \leq d_{TV}(T_n,T) . 
    \end{align}
    For the second term in (\ref{secondi}), we write:
    \begin{align}\label{eq3i}
    &\int_{\mathbb{Y}}\left|f\left(z_1^2\left(z_0^{ }, y_1\right)\right)-f\left(z_1^1\left(z_0, y_1\right)\right)\right| P^{{\cal T},Q_n}\left(d y_1 \mid z_0\right)\nonumber\\
    &\leq \norm{f}_L \int_{\mathbb{Y}}\rho_{BL} (z_1^2\left(z_0, y_1\right), z_1^1\left(z_0, y_1\right)) P^{{\cal T},Q_n}\left(d y_1 \mid z_0\right)\nonumber\\
    &=\norm{f}_L \int_{\mathbb{Y}} \sup_{g \in \operatorname{BL}_1(\mathbb{X})}\left(\int_{\mathbb{X}} g(x_1)z_1^2\left(z_0, y_1\right)(dx_1) - \int_{\mathbb{X}} g(x_1)z_1^1\left(z_0, y_1\right)(dx_1)\right)P^{{\cal T},Q_n}\left(d y_1 \mid z_0\right)
    \end{align}
    For the inner term:
    \begin{align}\label{insidei}
    &\sup_{g \in \operatorname{BL}_1(\mathbb{X})}\left(\int_{\mathbb{X}} g(x_1)z_1^2\left(z_0 , y_1\right)(dx_1) - \int_{\mathbb{X}} g(x_1)z_1^1\left(z_0, y_1\right)(dx_1)\right)\\
    &=\sup_{g \in \operatorname{BL}_1(\mathbb{X})}\left(\int_{\mathbb{X}} g(x_1)(z_1^2 \left(z_0 , y_1\right)- z_1^1\left(z_0, y_1\right))(dx_1)\right)\nonumber\\
    &=\sup_{g \in \operatorname{BL}_1(\mathbb{X})}\left(\int_{\mathbb{X}} g(x_1)w_{y_1}(dx_1)\right),\nonumber
    \end{align}
    where $w_{y_1}=(z_1^2\left(z_0 , y_1\right)- z_1^1\left(z_0, y_1\right))$ 
    which is a signed measure on $\mathbb{X}$. Using the same argument as in part (i), we proceed as follows:
    \begin{align}
    &\int_{\mathbb{Y}} \sup_{g \in \operatorname{BL}_1(\mathbb{X})}\left(\int_{\mathbb{X}} g(x_1)z_1^2\left(z_0 , y_1\right)(dx_1) - \int_{\mathbb{X}} g(x_1)z_1^1\left(z_0, y_1\right)(dx_1)\right)P^{{\cal T},Q_n}\left(d y_1 \mid z_0\right)\nonumber\\
    &=\int_{\mathbb{Y}} \left(\int_{\mathbb{X}} g_{y_1}(x_1)z_1^2\left(z_0 , y_1\right)(dx_1) - \int_{\mathbb{X}} g_{y_1}(x_1)z_1^1\left(z_0, y_1\right)(dx_1)\right) P^{{\cal T},Q_n}\left(d y_1 \mid z_0\right)\label{inside2i}\\
    &=\int_{\mathbb{Y}} \int_{\mathbb{X}} g_{y_1}(x_1)z_1^2\left(z_0 , y_1\right)(dx_1) P^{{\cal T},Q_n}(d y_1 \mid z_0)-
    \int_{\mathbb{Y}}\int_{\mathbb{X}} g_{y_1}(x_1)z_1^2\left(z_0, y_1\right)(dx_1) P^{{\cal T}_n,Q_n}(d y_1 \mid z_0)\nonumber\\
    &+\int_{\mathbb{Y}} \int_{\mathbb{X}} g_{y_1}(x_1)z_1^2\left(z_0 , y_1\right)(dx_1) P^{{\cal T}_n,Q_n}(d y_1 \mid z_0)-
    \int_{\mathbb{Y}}\int_{\mathbb{X}} g_{y_1}(x_1)z_1^1\left(z_0, y_1\right)(dx_1) P^{{\cal T},Q_n}(d y_1 \mid z_0 )\label{gy1i}
    \end{align}
    For the first term, we have
    \begin{align*}
    \int_{\mathbb{X}} g_{y_1}(x_1)z_1^2\left(z_0 , y_1\right)(dx_1) \in \operatorname{T}(\Y) .
    \end{align*}
    Thus, using the same reasoning as in (\ref{sup-1}), we have
    \begin{align}\label{sup-1i}
    &=\int_{\mathbb{Y}} \int_{\mathbb{X}} g_{y_1}(x_1)z_1^2\left(z_0 , y_1\right)(dx_1) P^{{\cal T},Q_n}(d y_1 \mid z_0)-
    \int_{\mathbb{Y}}\int_{\mathbb{X}} g_{y_1}(x_1)z_1^2\left(z_0, y_1\right)(dx_1) P^{{\cal T}_n,Q_n}(d y_1 \mid z_0)\nonumber\\
    & \leq \normt{P^{{\cal T},Q_n}\left(d y_1 \mid z_0\right)-P^{{\cal T}_n,Q_n}\left(d y_1 \mid z_0\right)} \leq d_{TV}(T_n,T).  
    \end{align}
    
    For the second term, we apply smoothing
    \begin{align}\label{sup-2i}
    &\int_{\mathbb{Y}} \int_{\mathbb{X}} g_{y_1}(x_1)z_1^2\left(z_0 , y_1\right)(dx_1) P^{{\cal T}_n,Q_n}(d y_1 \mid z_0)-
    \int_{\mathbb{Y}}\int_{\mathbb{X}} g_{y_1}(x_1)z_1^1\left(z_0, y_1\right)(dx_1) P^{{\cal T},Q_n}(d y_1 \mid z_0 ) \nonumber \\
    &=\int_{\mathbb{Y}} \int_{\mathbb{X}} g_{y_1}(x_1)Q_n\left(d y_1 \mid x_1\right) \mathcal{T}\left(d x_1 \mid z_0^{ }\right)-\int_{\mathbb{Y}} \int_{\mathbb{X}} g_{y_1}(x_1) Q_n\left(d y_1 \mid x_1\right) \mathcal{T}_n\left(d x_1 \mid z_0\right)\nonumber\\
    &=\int_{\mathbb{X}} \int_{\mathbb{Y}} g_{y_1}(x_1)Q_n\left(d y_1 \mid x_1\right) \mathcal{T}\left(d x_1 \mid z_0^{ }\right)-\int_{\mathbb{X}} \int_{\mathbb{Y}} g_{y_1}(x_1) Q_n\left(d y_1 \mid x_1\right) \mathcal{T}_n\left(d x_1 \mid z_0\right)\nonumber\\
    &=\int_{\mathbb{X}} \omega_n(x_1) \mathcal{T}\left(d x_1 \mid z_0^{ }\right)-\int_{\mathbb{X}}\omega_n(x_1) \mathcal{T}_n\left(d x_1 \mid z_0\right) \leq d_{TV}(T_n,T)
    \end{align}
    where $\omega_n(x_1)=\int_{\mathbb{Y}} g_{y_1}(x_1)Q_n\left(d y_1 \mid x_1\right).$
    The first equality is a consequence of the equation (\ref{kernelFilter}), the second equality follows from Fubini's theorem, and the last inequality holds since \(\norm{\omega_n}_\infty \leq 1\).

    By combining the results from inequalities (\ref{eq3i}), (\ref{gy1i}), (\ref{sup-1i}), and (\ref{sup-2i}), we obtain:
    \begin{align}\label{last2i}
    & \int_{\mathbb{Y}}\left|f\left(z_1^2\left(z_0^{ }, y_1\right)\right)-f\left(z_1^1\left(z_0, y_1\right)\right)\right| P^{{\cal T},Q_n}\left(d y_1 \mid z_0\right) \leq 2 \norm{f}_L d_{TV}(T_n,T).
    \end{align}
   
Using inequalities (\ref{ii}), (\ref{secondi}), (\ref{TV1i}), and (\ref{last2i}), we can write:
 \begin{align}\label{rhobli}
        d_{BL}(\eta^{{\cal T}_n,Q_n},\eta^{{\cal T},Q_n}) \leq \sup_{f\in \operatorname{BL}(\Z)} (\norm{f}_\infty + 2 \norm{f}_L) d_{TV}(T_n,T)
        \leq 2 d_{TV}(T_n,T).
    \end{align}

The result holds by the triangle inequality.\hfill$\square$

\begin{lemma}\label{Py2}
Under Assumption \ref{channel_reg}, we have that
     $\normt{P^{{\cal T}_n,Q_n}\left(d y_1 \mid z_0\right)-P\left(d y_1 \mid z_0\right)}\leq  d_{TV}(Q_n, Q) +  L_Q d_{W_1}\left(T_n,T\right)$.
\end{lemma}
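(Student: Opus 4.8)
The plan is to follow the two-step decomposition of Lemma~\ref{Py}, keeping the channel step unchanged but upgrading the transition step to a Wasserstein estimate that exploits the Lipschitz regularity in Assumption~\ref{channel_reg}. First I would reintroduce the intermediate model determined by $(\T, Q_n)$, with induced one-step measurement law $P^{{\cal T},Q_n}(dy_1\mid z_0)$, and apply the triangle inequality
\[
\normt{P^{{\cal T}_n,Q_n}(dy_1\mid z_0)-P(dy_1\mid z_0)} \le \normt{P^{{\cal T}_n,Q_n}(dy_1\mid z_0)-P^{{\cal T},Q_n}(dy_1\mid z_0)} + \normt{P^{{\cal T},Q_n}(dy_1\mid z_0)-P(dy_1\mid z_0)}.
\]
The second term is bounded by $d_{TV}(Q_n,Q)$ exactly as in part (i) of the proof of Lemma~\ref{Py} (the channels there are arbitrary), so it suffices to show that the first term is at most $L_Q\, d_{W_1}(\T_n,\T)$.

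For the first term I would reuse the expansion from part (ii) of the proof of Lemma~\ref{Py}: for any measurable $g$ with $\norm{g}_\infty\le1$, writing $h_n(x_1):=\int Q_n(dy_1\mid x_1)\,g(y_1)$, the difference equals
\[
\int z_0(dx_0)\!\int T_n(dx_1\mid x_0)\,h_n(x_1) - \int z_0(dx_0)\!\int T(dx_1\mid x_0)\,h_n(x_1).
\]
The key point is that Assumption~\ref{channel_reg} makes $h_n$ Lipschitz with constant $L_Q$: for all $x_1,x_1'$,
\[
|h_n(x_1)-h_n(x_1')| = \Bigl|\int g(y_1)\bigl(Q_n(dy_1\mid x_1)-Q_n(dy_1\mid x_1')\bigr)\Bigr| \le \norm{Q_n(\cdot\mid x_1)-Q_n(\cdot\mid x_1')}_{TV} \le L_Q\,\norm{x_1-x_1'}.
\]
Hence, by Kantorovich--Rubinstein duality, for each fixed $x_0$ the inner difference is bounded by $\norm{h_n}_L\, W_1\!\bigl(T_n(\cdot\mid x_0),T(\cdot\mid x_0)\bigr)\le L_Q\, d_{W_1}(\T_n,\T)$; integrating against the probability measure $z_0(dx_0)$ preserves the bound, and taking the supremum over $g$ with $\norm{g}_\infty\le1$ gives $\normt{P^{{\cal T}_n,Q_n}(dy_1\mid z_0)-P^{{\cal T},Q_n}(dy_1\mid z_0)}\le L_Q\, d_{W_1}(\T_n,\T)$. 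Adding the two terms yields the claimed bound.

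I do not anticipate a genuine obstacle here; the one place that requires care is that $h_n$ is $L_Q$-Lipschitz rather than $1$-Lipschitz, so one must invoke the general inequality $\int f\,d\mu-\int f\,d\nu\le\norm{f}_L\,W_1(\mu,\nu)$ and carry the constant $L_Q$ through. The remaining routine points, handled exactly as in Lemma~\ref{Py}, are the Fubini interchange of the $x_0$- and $y_1$-integrals (licensed because all integrands are bounded by $1$) and the fact that the uniform distance $d_{W_1}(\T_n,\T)$ dominates each pointwise-in-$(x_0,u)$ value $W_1\!\bigl(T_n(\cdot\mid x_0),T(\cdot\mid x_0)\bigr)$.
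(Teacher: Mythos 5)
Your proposal is correct and follows essentially the same route as the paper: the same triangle inequality through the intermediate model $(\mathcal{T},Q_n)$, the same reuse of the channel bound from Lemma~\ref{Py}, and the same key observation that $h_n(x_1)=\int g(y_1)Q_n(dy_1\mid x_1)$ is $L_Q$-Lipschitz under Assumption~\ref{channel_reg}, so that Kantorovich--Rubinstein duality upgrades the transition term to $L_Q\,d_{W_1}(\mathcal{T}_n,\mathcal{T})$. The only cosmetic difference is that the paper applies the duality to the mixed measures $T(\cdot\mid z_0)$ and $T_n(\cdot\mid z_0)$ while you apply it pointwise in $x_0$ and then integrate against $z_0$; both are valid and yield the same bound.
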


\begin{proof}
First we prove that
$\normt{P^{{\cal T},Q_n}\left(d y_1 \mid z_0\right)-P^{{\cal T}_n,Q_n}\left(d y_1 \mid z_0\right)}\leq L_Q d_{W_1}\left(T_n,T\right) $.
Denoting by $T(dx_1|z):=\int T(dx_1|x)z(dx)$, we start from the following term for some $\|g\|_\infty \leq 1$
\begin{align*}
& \left|\int g(y_1)Q_n(dy_1|x_1)T_n\left(d x_1 \mid z_0\right)  -\int g(y_1)Q_n(dy_1|x_1)T\left(d x_1 \mid z_0\right)  \right|\\
&\leq \|\int g(y_1)Q_n(dy_1|x_1)\|_{Lip} W_1\left(T(\cdot|z_0),T_n(\cdot|z_0)\right) \leq L_Q d_{W_1}(T,T_n)
\end{align*}
where we used the following:
\begin{align*}
\left|\int g(y_1)Q_n(dy_1|x_1) - \int g(y_1)Q_n(dy_1|x'_1)\right|\leq L_Q \|x_1-x'_1\|
\end{align*}
which follows from Assumption \ref{channel_reg}. Taking supremum over all $\|g\|_\infty \leq 1$ implies that 
$$ \normt{P^{{\cal T},Q_n}\left(d y_1 \mid z_0\right)-P^{{\cal T}_n,Q_n}\left(d y_1 \mid z_0\right)} \leq L_Q d_{W_1}(T,T_n).$$
Result follow from triangle inequality and inequality (\ref{Q_b}).
\end{proof}

\noindent\textbf{Proof of Theorem \ref{main2}.} 
We focus
 $d_{BL}(\eta^{{\cal T},Q_n},\eta^{{\cal T}_n,Q_n}) \leq 2 L_Q d_{W_1}\left(T_n,T\right)$, as result follows directly from the triangle inequality. We first note that (\ref{TV1i}) and (\ref{sup-1i}) are bounded by $L_Q d_{W_1}\left(T_n,T\right)$ directly using Lemma \ref{Py2}.

For (\ref{sup-2i}), we have that 
\begin{align*}
|w_n(x_1)-w_n(x_1')| =\left| \int_\mathbb{Y} g_{y_1}(x_1)Q_n(dy_1|x_1) - \int_\mathbb{Y} g_{y_1}(x_1)Q_n(dy_1|x_1') \right|\leq L_Q \|x_1-x_1'\|
\end{align*}
under Assumption \ref{channel_reg}. We can then bound (\ref{sup-2i}), instead using $L_Q d_{W_1}\left(T_n,T\right)$. Combining these bounds concludes the result.
\hfill$\square$



\noindent\textbf{Proof of Theorem \ref{W1main_1}.} 
  From the proof of Theorem \ref{main1}, from inequality (\ref{rhobl}), we obtain:
  \begin{align}\label{rhobl_W1}
    d_{W_1}(\eta,\eta^{{\cal T},Q_n}) \leq \sup_{f\in \operatorname{W}(\Z)} (\norm{f}_\infty + 2 \norm{f}_L)  d_{TV}(Q_n,Q)\leq (D/2+2)  d_{TV}(Q_n,Q).
  \end{align}
Since the diameter of $\mathbb{X}$ is $D$, it follows that the diameter of $\mathcal{Z}$ equipped with the $W_1$ norm is also $D$.  
Moreover, since $\sup_{x,y} |f(x)-f(y)| \leq D$, there exists a constant $c$ such that 
\(
-D/2\leq f(x)-c \leq D/2\;\) \(\forall x \in \mathbb{X}.
\)
Hence, by replacing $f$ with $f-c$, we may assume without loss of generality that $\|f\|_{\infty} \leq D/2$.  
The result then follows directly from the triangle inequality.  
\hfill$\square$

\noindent\textbf{Proof of Theorem \ref{W1main_2}.} 
By applying the same steps used in the proof of Theorem \ref{main2}, we can bound (\ref{TV1i}), (\ref{sup-1i}), and (\ref{sup-2i}) by $L_Q d_{W_1}(T_n,T)$ under Assumption \ref{channel_reg}. Using (\ref{rhobl_W1}), we obtain the desired result.
\hfill$\square$

\section{Computable Robustness Bounds to Simultaneous Perturbations in System and Measurement Kernels}\label{robustness}

We now study the robustness problem. As noted, on prior work we refer the reader to \cite{kara2020robustness} for robustness to transition kernels and to \cite{WuVerdu,YukselOptimizationofChannels,YukselBasarBook24} and \cite{hogeboom2021sequential} for the special case of perturbations of measurement channels. We first review regularity properties of optimal solutions. 

\subsection{Regularity of Value Functions and Optimal Solutions}
For the robustness analysis, we will critically build on regularity properties of optimal solutions, presented in this subsection.

\noindent{\bf Discounted cost.} An implication of Theorem \ref{ergodicity} or Theorem \ref{WassersteinCont2} is that solutions to optimality equations are Lipschitz continuous for both discounted cost and average cost problems \cite[Theorems 3.1 and 3.2]{tutorialkara2024partially}:

\begin{theorem}\label{ExistenceDiscount}
If the cost function $c: \mathbb{X} \times \mathbb{U} \to \mathbb{R}$ is continuous and bounded, and $\mathbb{U}$ is compact, under the assumptions of Theorem \ref{Weak_tran_channel_thm_comb}, for any $\beta \in (0,1)$, there exists an optimal solution to the discounted cost optimality problem with a continuous and bounded value function. Furthermore, either under Assumption \ref{main_assumption}, with $K_2=\frac{\alpha D (3-2\delta(Q))}{2}$ or under Assumption \ref{weakTtvQweakTtvQ2} with $K_2 = \left( \theta +\frac{3 \theta \gamma D}{2} \right)$, if $\beta K_2 < 1$ the value function is Lipschitz continuous.
\end{theorem}

\noindent{\bf Average cost.} 
The average cost optimality equation (ACOE) plays a crucial role for the analysis and the existence results of MDPs under the infinite horizon average cost optimality criteria. The triplet $(h,\rho^*,\gamma^*)$, where $h,\gamma:\cal{P}(\mathbb{X})\to \mathbb{R}$ are measurable functions and $\rho*\in\mathbb{R}$ is a constant, forms the ACOE if 
\begin{align}\label{acoe}
h(z)+\rho^*&=\inf_{u\in\mathbb{U}}\left\{\tilde{c}(z,u) + \int h(z_1)\eta(dz_1|z,u)\right\}\nonumber\\
&=\tilde{c}(z,\gamma^*(z)) + \int h(z_1)\eta(dz_1|z,\gamma^*(z))
\end{align}
for all $z\in\cal{P}(\mathbb{X})$. It is well known that (see e.g. \cite[Theorem 5.2.4]{HernandezLermaMCP}) if (\ref{acoe}) is satisfied with the triplet $(h,\rho^*,\gamma^*)$, and furthermore if $h$ satisfies $\sup_{\gamma\in\Gamma}\lim_{t \to \infty}\frac{E_z^\gamma[h(Z_t)] }{t}=0, \quad \forall z\in\cal{P}(\mathbb{X})$, then $\gamma^*$ is an optimal policy for the POMDP under the infinite horizon average cost optimality criteria, and $J^*(z)=\inf_{\gamma\in\Gamma}J(z,\gamma)=\rho^* \quad \forall z\in \cal{P}(\mathbb{X})$.
\begin{theorem}\label{mainEmre}
\begin{itemize}
\item[(i)]  \cite[Theorem 1.2]{demirci2023average} Under Assumption \ref{main_assumption}, with $K_2=\frac{\alpha D (3-2\delta(Q))}{2} < 1$, 
    a solution to the average cost optimality 
    equation (ACOE) exists. 
    This leads to the existence of an optimal 
    control policy, and optimal cost is constant for 
    every initial state.
\item[(ii)]  \cite[Theorem 3.7]{DKY_ACC2025} Under Assumption \ref{weakTtvQweakTtvQ2}, with $K_2=\theta +\frac{3 \theta \gamma D}{2}  < 1$, 
    a solution to the average cost optimality 
    equation (ACOE) exists. 
    This leads to the existence of an optimal 
    control policy, and optimal cost is constant for 
    every initial state.
  \item[(iii)]  \cite[Theorem 3]{anotherLookPOMDPs} If the cost function $c: \mathbb{X} \times \mathbb{U} \to \mathbb{R}$ is continuous and bounded, and $\mathbb{U}$ is compact, under weak Feller regularity of $\eta$ (e.g.,  under Theorem \ref{Weak_tran_channel_thm_comb}), there exists an optimal policy \footnote{Here, the optimality result may only hold for a restrictive class of initial conditions or initializations, unlike parts (i)-(ii), as the convex analytic method is utilized.}  
  \end{itemize}
\end{theorem}


\subsection{Computable Robustness Bounds under Discounted Cost}

Using Lipschitz regularity from Theorems \ref{ExistenceDiscount} and \ref{mainEmre}, and continuity results from Theorems \ref{W1main_1} and \ref{W1main_2}, we establish robustness and continuity properties of optimal solutions, extending \cite{zhou2024robustness} and  \cite{bozkurt2025model} for related results in the discounted cost setting. Let us define the optimal cost function for the belief-MDP (i.e., the fully observable case) as follows:
\(J^*_\beta(\tilde{c}, \mu, \eta)\),
where $\tilde{c}$ is the transformed cost function, $\mu$ is the initial distribution, $\eta$ is the transition kernel, and $\beta$ is the discount factor. We denote this function as \( J^*_\beta(\eta) (.) \) when considering it as a function of \( \mu \) (the initial distribution), and let the corresponding optimal policy be denoted by \( \gamma^*_\eta \) (which is also a map from the initial distribution to the policy space). 
Similarly, we define \( J_\beta(\eta, \gamma) (.) \) as the cost function of the policy \( \gamma \) for a given initial distribution.

From the definition of the optimal cost, we know the following equivalences:
$J^*_{\beta}(c,\mu,{\cal T}, Q) = J^*_\beta(\tilde{c}, \mu, \eta)$,
and similarly,
$J^*_{\beta}(c,\mu,{\cal T}_n, Q_n) = J^*_\beta(\tilde{c}, \mu, \eta^{{\cal T}_n, Q_n})$.
 The following theorem is adapted from \cite[Theorem 2.4]{zhou2024robustness}.

\begin{theorem}[Difference in Value Functions]
\label{upperbound1proof}
If $c$ is continuous and bounded, $\mathbb{U}$ is compact, $\eta,\eta^{{\cal T}_n,Q_n}$ are weak Feller, and $\eta$ is Wasserstein-regular with Lipschitz-continuous $J^*_{\beta}(\eta)$, then
\[
\| J^*_{\beta}(\eta) - J^*_{\beta}(\eta^{{\cal T}_n,Q_n}) \|_{\infty} \leq \frac{\beta}{1-\beta}\|J^*_{\beta}\|_{\Lip}\, d_{W_1}(\eta,\eta^{{\cal T}_n,Q_n}).
\]
\end{theorem}

Under Assumption \ref{main_assumption} and given that $K_2 < 1$ (as defined in (\ref{K2DefWasF})) (or alternatively  under Assumption \ref{weakTtvQweakTtvQ2} with $K_2 = \left( \theta +\frac{3 \theta \gamma D}{2} \right) < 1$), all conditions in the theorem hold. The only additional check required is whether $\eta^{{\cal T}_n,Q_n}$ is weak Feller. In this case, by applying \cite[Lemma 4.2]{demirci2023average}, we obtain:    $\left\Vert J^*_{\beta}(\eta) \right\Vert _{\text{Lip}} \leq \frac{K_1}{1 - \beta K_2}.$

Observe that Theorems~\ref{W1main_1} and~\ref{W1main_2} provide upper bounds on the Wasserstein-1 distance $d_{\wc_{1}}(\eta,\eta^{{\cal T}_n,Q_n})$ between the true and approximate filter processes. Using these results, we can derive the following upper bound:

\begin{corollary}\label{discount_cont}
Under Assumption~\ref{main_assumption}, suppose $K_2 < 1$ and the filter process $\eta^{{\cal T}_n, Q_n}$ is weak Feller. Then, the following bound holds:
\begin{align*}
& \left\Vert J^*_{\beta}(\eta) - J^*_{\beta}({\eta^{{\cal T}_n,Q_n}}) \right\Vert_{\infty} \leq 
\frac{\beta}{1-\beta} \frac{K_1}{1 - \beta K_2} \frac{D+4}{2} \left( d_{TV}(\T_n,\T) + d_{TV}(Q_n,Q)\right).
\end{align*}
Furthermore, under Assumption~\ref{channel_reg}, we have the refined bound:
\begin{align*}
& \left\Vert J^*_{\beta}(\eta) - J^*_{\beta}({\eta^{{\cal T}_n,Q_n}}) \right\Vert_{\infty} \leq 
\frac{\beta}{1-\beta} \frac{K_1}{1 - \beta K_2} \frac{D+4}{2} \left( L_{Q}d_{W_1}(\T_n,\T) + d_{TV}(Q_n,Q)\right).
\end{align*}
\end{corollary}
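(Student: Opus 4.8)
The plan is to chain together three ingredients that have already been assembled in the excerpt: (a) the value-function difference bound of Theorem~\ref{upperbound1proof}, (b) the explicit Lipschitz estimate $\left\Vert J_{\beta}(\eta)\right\Vert_{\Lip}\le K_1/(1-\beta K_2)$ quoted right after that theorem, and (c) the filter-kernel proximity bounds of Theorems~\ref{W1main_1} and~\ref{W1main_2}. First I would verify that the hypotheses of Theorem~\ref{upperbound1proof} are met under Assumption~\ref{main_assumption} with $K_2<1$: the cost $c$ is continuous and bounded and $\mathbb{U}$ is compact by assumption; $\eta$ is weak Feller by Theorem~\ref{TV_kernel_thm} (since Assumption~\ref{main_assumption} gives total-variation continuity of $\mathcal{T}$ and a control-independent observation channel is the standing setting here, or alternatively via Theorem~\ref{TV_channel_thm}); $\eta$ is Wasserstein-regular with constant $K_2<1$ by Theorem~\ref{ergodicity}, hence by Theorem~\ref{ExistenceDiscount} / \cite[Lemma 4.2]{demirci2023average} the value function $J_\beta(\eta)$ is Lipschitz; and $\eta^{{\cal T}_n,Q_n}$ is weak Feller by the explicit hypothesis of the corollary. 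So Theorem~\ref{upperbound1proof} applies verbatim.

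Next I would substitute. Theorem~\ref{upperbound1proof} gives
\[
\bigl\Vert J^*_\beta(\eta)-J^*_\beta(\eta^{{\cal T}_n,Q_n})\bigr\Vert_\infty
\le \frac{\beta}{1-\beta}\,\bigl\Vert J_\beta(\eta)\bigr\Vert_{\Lip}\, d_{W_1}(\eta,\eta^{{\cal T}_n,Q_n}).
\]
Into this I insert $\bigl\Vert J_\beta(\eta)\bigr\Vert_{\Lip}\le \dfrac{K_1}{1-\beta K_2}$ and then the kernel bound from Theorem~\ref{W1main_1}, namely $d_{W_1}(\eta,\eta^{{\cal T}_n,Q_n})\le (D/2+2)\bigl(d_{TV}(\mathcal{T}_n,\mathcal{T})+d_{TV}(Q_n,Q)\bigr)=\dfrac{D+4}{2}\bigl(d_{TV}(\mathcal{T}_n,\mathcal{T})+d_{TV}(Q_n,Q)\bigr)$. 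Multiplying the three factors yields exactly the first displayed inequality of the corollary. For the refined bound, I simply replace the application of Theorem~\ref{W1main_1} by Theorem~\ref{W1main_2}, which under the additional Assumption~\ref{channel_reg} gives $d_{W_1}(\eta,\eta^{{\cal T}_n,Q_n})\le \dfrac{D+4}{2}\bigl(L_Q\, d_{W_1}(\mathcal{T}_n,\mathcal{T})+d_{TV}(Q_n,Q)\bigr)$; everything else is unchanged, producing the second display.

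Honestly there is no deep obstacle here — the corollary is a bookkeeping composition of results already proved. The one point that deserves a sentence of care is the monotonicity being invoked implicitly: Theorem~\ref{upperbound1proof} is stated with a generic $d_{W_1}(\eta,\eta^{{\cal T}_n,Q_n})$ on the right, and I am upper-bounding that quantity by the kernel-level estimate, which is legitimate precisely because the bound in Theorem~\ref{upperbound1proof} is increasing in the Wasserstein distance. A second minor check is that the constant $K_1$ appearing in the Lipschitz estimate is the same $K_1$ from Assumption~\ref{main_assumption}-\ref{CostLipschitz}, so no renaming is needed. I would therefore present the proof as a short three-line calculation, once for each display, citing Theorems~\ref{upperbound1proof}, \ref{W1main_1}, \ref{W1main_2} and the post-theorem Lipschitz bound, and remark that the only genuine hypothesis the user must supply beyond Assumption~\ref{main_assumption} and $K_2<1$ is the weak Feller property of the approximate filter $\eta^{{\cal T}_n,Q_n}$, which is why it appears explicitly in the statement.
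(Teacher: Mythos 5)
Your proposal is correct and follows exactly the route the paper intends: apply Theorem~\ref{upperbound1proof}, insert the Lipschitz estimate $\left\Vert J_{\beta}(\eta)\right\Vert_{\Lip}\le K_1/(1-\beta K_2)$ from \cite[Lemma 4.2]{demirci2023average}, and then bound $d_{W_1}(\eta,\eta^{{\cal T}_n,Q_n})$ by Theorem~\ref{W1main_1} (respectively Theorem~\ref{W1main_2} for the refined bound). Your hypothesis-checking (weak Feller of $\eta$, Wasserstein regularity via Theorem~\ref{ergodicity}, and the explicitly assumed weak Feller property of $\eta^{{\cal T}_n,Q_n}$) matches the paper's own commentary preceding the corollary.
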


Similarly, \cite[Corollary 2.1]{zhou2024robustness} then implies the following:
\begin{theorem}
\label{upperbound3proof}
Under conditions of Theorem \ref{upperbound1proof}, we have
\begin{align*}&
\left\Vert J_{\beta}(\eta,\gamma^*_{\eta^{{\cal T}_n,Q_n}}) - J^*_{\beta}(\eta) \right\Vert_{\infty}
\leq \frac{2\beta}{(1-\beta)^{2}}  \left \Vert J^*_{\beta}(\eta)\right\Vert_{\text{Lip}} d_{W_{1}}(\eta,\eta^{{\cal T}_n,Q_n})
\end{align*}
\end{theorem}

\begin{corollary}\label{discount_robust}
Under Assumption \ref{main_assumption}, given that $ K_2 < 1 $ and assuming that $ \eta^{{\cal T}_n,Q_n} $ is weak Feller, we have:
\begin{align*}
&\|J_{\beta}(\eta,\gamma^*_{\eta^{{\cal T}_n,Q_n}})-J^*_{\beta}(\eta)\|_{\infty}\leq
\frac{2\beta K_1}{(1-\beta)^2(1-\beta K_2)}\frac{D+4}{2}\bigl(d_{TV}(\T_n,\T)+d_{TV}(Q_n,Q)\bigr).
\end{align*}
Furthermore, under Assumption~\ref{channel_reg}, we have the refined bound:
\begin{align*}
& \|J_{\beta}(\eta,\gamma^*_{\eta^{{\cal T}_n,Q_n}})-J^*_{\beta}(\eta)\|_{\infty} \leq 
\frac{2\beta K_1}{(1-\beta)^2(1-\beta K_2)}\frac{D+4}{2} \left( L_{Q}d_{W_1}(\T_n,\T) + d_{TV}(Q_n,Q)\right).
\end{align*}
\end{corollary}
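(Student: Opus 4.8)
The plan is to chain together three ingredients already available in the excerpt: the abstract robustness bound of Theorem~\ref{upperbound3proof}, the Lipschitz estimate on the belief-MDP value function recorded after Theorem~\ref{upperbound1proof}, and the explicit filter-kernel proximity bounds of Theorems~\ref{W1main_1} and~\ref{W1main_2}. The structure is exactly parallel to the proof of Corollary~\ref{discount_cont}, with the only change being that the robustness prefactor $\frac{2\beta}{(1-\beta)^2}$ coming from Theorem~\ref{upperbound3proof} replaces the continuity prefactor $\frac{\beta}{1-\beta}$.

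First I would check that the hypotheses of Theorem~\ref{upperbound3proof} (equivalently, those of Theorem~\ref{upperbound1proof}) are in force. Under Assumption~\ref{main_assumption} the cost $c$ is continuous and bounded, $\mathbb{X}$ is compact, and $\mathcal{T}$ is continuous in total variation; together with compactness of $\mathbb{U}$ (a standing assumption) and the belief-MDP reduction, this yields weak Feller continuity of $\eta$ via Theorem~\ref{TV_kernel_thm} (or Theorem~\ref{TV_channel_thm}), while weak Feller continuity of $\eta^{{\cal T}_n,Q_n}$ is part of the hypothesis and, via Theorem~\ref{ExistenceDiscount}, guarantees that the optimal policy $\gamma^*_{\eta^{{\cal T}_n,Q_n}}$ exists. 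Since $K_2 < 1$ and $\beta \in (0,1)$ we have $\beta K_2 < 1$, so Theorem~\ref{ergodicity} gives Wasserstein regularity of $\eta$ with constant $K_2$, and the discussion following Theorem~\ref{upperbound1proof} (through \cite[Lemma~4.2]{demirci2023average}) gives the Lipschitz bound $\|J_\beta(\eta)\|_{\Lip} \le \frac{K_1}{1-\beta K_2}$. Hence all conditions of Theorem~\ref{upperbound3proof} are met.

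Next I would apply Theorem~\ref{upperbound3proof} and then substitute the two previous estimates, obtaining
\[
\left\Vert J_{\beta}(\eta,\gamma^*_{\eta^{{\cal T}_n,Q_n}}) - J^*_{\beta}(\eta) \right\Vert_{\infty} \le \frac{2\beta}{(1-\beta)^2}\,\|J_\beta(\eta)\|_{\Lip}\, d_{W_1}(\eta,\eta^{{\cal T}_n,Q_n}) \le \frac{2\beta}{(1-\beta)^2}\,\frac{K_1}{1-\beta K_2}\, d_{W_1}(\eta,\eta^{{\cal T}_n,Q_n}).
\]
It then remains to bound $d_{W_1}(\eta,\eta^{{\cal T}_n,Q_n})$. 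Theorem~\ref{W1main_1} gives $d_{W_1}(\eta,\eta^{{\cal T}_n,Q_n}) \le (D/2+2)\bigl(d_{TV}(\mathcal{T}_n,\mathcal{T}) + d_{TV}(Q_n,Q)\bigr)$, and writing $D/2+2 = (D+4)/2$ produces the first displayed inequality of the corollary. For the refined bound, under the additional Assumption~\ref{channel_reg} I would instead invoke Theorem~\ref{W1main_2}, which replaces $d_{TV}(\mathcal{T}_n,\mathcal{T})$ by $L_Q\, d_{W_1}(\mathcal{T}_n,\mathcal{T})$, yielding the second displayed inequality.

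I do not expect a genuine obstacle: the argument is a bookkeeping assembly of results cited or proved earlier. The one point requiring care is verifying that the imported regularity facts — weak Feller continuity of $\eta$ and, above all, Lipschitz continuity of $J_\beta(\eta)$ with the explicit constant $\frac{K_1}{1-\beta K_2}$ — are legitimately available under Assumption~\ref{main_assumption} together with $K_2 < 1$ (in particular that $\beta K_2 < 1$, which is why the hypothesis is phrased as $K_2<1$ rather than merely $\beta K_2<1$), and that compactness of $\mathbb{U}$, which the existence and regularity theorems require, is among the standing assumptions.
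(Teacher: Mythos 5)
Your proposal is correct and follows exactly the route the paper intends: apply Theorem~\ref{upperbound3proof}, insert the Lipschitz estimate $\|J_\beta(\eta)\|_{\Lip} \le K_1/(1-\beta K_2)$ obtained via Theorem~\ref{ergodicity} and the discussion after Theorem~\ref{upperbound1proof}, and then bound $d_{W_1}(\eta,\eta^{{\cal T}_n,Q_n})$ by Theorem~\ref{W1main_1} (or Theorem~\ref{W1main_2} under Assumption~\ref{channel_reg}). The arithmetic assembling the constants matches the stated bounds, and your verification of the hypotheses is the same bookkeeping the paper relies on.
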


\begin{remark}[Strategic measures vs. belief-MDP reduction]
The above result builds on the belief-MDP reduction and the regularity of the corresponding belief transition kernel. An alternative approach is to directly compare the strategic measures on the observation, state, and action processes of the perturbed and the original model. \cite[Theorem 2.3 and Theorem 3.1]{kara2020near} and \cite[Theorem 3.3]{devranby2025near} follow this approach for perturbations on the kernels. While the conditions in the papers above require total variation continuity in the states and actions, and therefore are more relaxed, their use for finite model approximations require a tailored analysis via the product topology on measurements and, if applicable, priors. Whereas, the analysis via belief-MDP reduction only uses regularity in the reduced kernel for which readily available results are applicable. For finite models, a related bound, also via total variation perturbations on kernels, and consistent with either of the approaches, is presented in \cite[Theorem 15.11]{Kri25}.
\end{remark}

We note that the above theorem addresses the case that the approximate value function may not be Lipschitz, further refinement is possible if it is so. This refinement avoids the term $(1-\beta)^2$ in the denominator and is thus useful for the average cost also. The following is such an adaptation of \cite[Theorem 2.7]{zhou2024robustness} in our context. 

\begin{theorem}\label{upperbound2proof}
If $c$ is continuous and bounded, $\mathbb{U}$ is compact, $\eta,\eta^{{\cal T}_n,Q_n}$ are weak Feller, and both $\eta$, $\eta^{{\cal T}_n,Q_n}$ are Wasserstein-regular with Lipschitz-continuous $J^*_{\beta}(\eta)$ and $J^*_{\beta}(\eta^{{\cal T}_n,Q_n})$, then
\begin{align*}
&\left\Vert J_{\beta}(\eta,\gamma^*_{\eta^{{\cal T}_n,Q_n}}) - J^*_{\beta}(\eta) \right\Vert_{\infty}
\leq 
\frac{\beta}{1-\beta}\left( \|J^*_{\beta}(\eta)\|_{\Lip} + \|J^*_{\beta}(\eta^{{\cal T}_n,Q_n})\|_{\Lip} \right)  d_{W_{1}}(\eta,\eta^{{\cal T}_n,Q_n}).
\end{align*}
\end{theorem}

\begin{corollary}
Suppose Assumption \ref{main_assumption} holds for both the correct and the approximate systems, and that $K_2 < 1$ and $K_2^{\T_n, Q_n} < 1$ (as defined in (\ref{K2DefWasF}) for the approximate system). Then, we have:
\begin{align*}
&\left\Vert J_{\beta}(\eta,\gamma^*_{\eta^{{\cal T}_n,Q_n}}) - J^*_{\beta}(\eta) \right\Vert_{\infty}\leq \frac{\beta}{(1-\beta)} \frac{K_1}{1 - \beta K_2} 
\frac{K_1}{1 - \beta K_2^{T_n, Q_n}} \frac{D+4}{2} \left(  d_{TV}(\T_n,\T) + d_{TV}(Q_n,Q)\right).
\end{align*}
\end{corollary}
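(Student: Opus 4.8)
The plan is to simply chain together the two robustness results already established for the approximate-policy performance gap and the filter-kernel proximity bounds. First I would invoke Theorem~\ref{upperbound2proof}, which—under the stated hypotheses (continuity and boundedness of $c$, compactness of $\mathbb{U}$, weak Feller property and Wasserstein regularity of both $\eta$ and $\eta^{{\cal T}_n,Q_n}$, with Lipschitz-continuous value functions $J_\beta(\eta)$ and $J_\beta(\eta^{{\cal T}_n,Q_n})$)—gives
\[
\left\Vert J_{\beta}(\eta,\gamma^*_{\eta^{{\cal T}_n,Q_n}}) - J^*_{\beta}(\eta) \right\Vert_{\infty}
\leq \frac{\beta}{1-\beta}\left( \|J_{\beta}(\eta)\|_{\Lip} + \|J_{\beta}(\eta^{{\cal T}_n,Q_n})\|_{\Lip} \right) d_{W_{1}}(\eta,\eta^{{\cal T}_n,Q_n}),
\]
noting that $J_\beta(\eta,\gamma^*_{\eta^{{\cal T}_n,Q_n}}) \geq J^*_\beta(\eta)$ so the left side is indeed this performance gap.

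Next I would verify the hypotheses of Theorem~\ref{upperbound2proof} hold under Assumption~\ref{main_assumption} applied to both systems with $K_2<1$ and $K_2^{{\cal T}_n,Q_n}<1$: Theorem~\ref{ergodicity} gives Wasserstein regularity of $\eta$ with coefficient $K_2$ (and likewise for $\eta^{{\cal T}_n,Q_n}$), Theorem~\ref{TV_kernel_thm} (the total-variation continuity of the transition kernel in Assumption~\ref{main_assumption}, part~\ref{totalvar}) yields the weak Feller property, and the Lipschitz bound $\|J_\beta(\eta)\|_{\Lip}\leq \frac{K_1}{1-\beta K_2}$ follows from \cite[Lemma 4.2]{demirci2023average} as already quoted in the excerpt after Theorem~\ref{upperbound1proof}; the analogous bound $\|J_\beta(\eta^{{\cal T}_n,Q_n})\|_{\Lip}\leq \frac{K_1}{1-\beta K_2^{{\cal T}_n,Q_n}}$ holds for the approximate model. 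Substituting both Lipschitz bounds into the displayed inequality gives the factor $\frac{\beta}{1-\beta}\left(\frac{K_1}{1-\beta K_2}+\frac{K_1}{1-\beta K_2^{{\cal T}_n,Q_n}}\right)$.

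Finally I would bound $d_{W_1}(\eta,\eta^{{\cal T}_n,Q_n})$ using Theorem~\ref{W1main_1}, which under compactness of $\mathbb{X}$ with diameter $D$ gives $d_{W_1}(\eta,\eta^{{\cal T}_n,Q_n}) \leq \frac{D+4}{2}\left(d_{TV}({\cal T}_n,{\cal T}) + d_{TV}(Q_n,Q)\right)$; multiplying through produces the claimed bound, after noting that the sum of the two Lipschitz terms is written in the corollary statement as the product $\frac{K_1}{1-\beta K_2}\cdot\frac{K_1}{1-\beta K_2^{{\cal T}_n,Q_n}}$ (here I would need to check whether the corollary as stated is using a slightly looser envelope — e.g.\ bounding the sum $a+b$ of two quantities each less than $\frac{K_1}{1-\beta K_2^{\max}}$ by an appropriate product, or whether this is a typo for the sum; I would present the cleaner sum-form $\frac{\beta}{1-\beta}\left(\frac{K_1}{1-\beta K_2}+\frac{K_1}{1-\beta K_2^{{\cal T}_n,Q_n}}\right)\frac{D+4}{2}(d_{TV}({\cal T}_n,{\cal T})+d_{TV}(Q_n,Q))$ and remark that it implies the stated one under mild conditions such as $K_1 \geq 1-\beta K_2$). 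The only genuine subtlety—hardly an obstacle—is this bookkeeping step reconciling the "sum of Lipschitz norms" appearing in Theorem~\ref{upperbound2proof} with the "product" appearing in the corollary statement; everything else is a direct substitution.
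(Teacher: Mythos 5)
Your proposal is correct and follows exactly the route the paper intends: Theorem~\ref{upperbound2proof} for the performance gap, Theorem~\ref{ergodicity} and Theorem~\ref{TV_kernel_thm} to verify Wasserstein regularity and the weak Feller property under Assumption~\ref{main_assumption}, the bound $\|J_\beta(\eta)\|_{\Lip}\le \frac{K_1}{1-\beta K_2}$ (and its analogue for the approximate model), and Theorem~\ref{W1main_1} for $d_{W_1}(\eta,\eta^{{\cal T}_n,Q_n})$. Your suspicion about the final bookkeeping step is well founded: the argument yields the \emph{sum} $\frac{\beta}{1-\beta}\left(\frac{K_1}{1-\beta K_2}+\frac{K_1}{1-\beta K_2^{{\cal T}_n,Q_n}}\right)\frac{D+4}{2}\left(d_{TV}({\cal T}_n,{\cal T})+d_{TV}(Q_n,Q)\right)$, and the product of the two Lipschitz factors appearing in the corollary statement is evidently a typographical slip (the parallel average-cost result, Corollary~\ref{robust_average}, correctly displays the sum), so your cleaner sum-form bound is the one that actually follows and should be regarded as the intended statement.
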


\subsection{Computable Robustness Bounds under Average Cost}

In the following, we discuss the average cost setup building on \cite[Theorem 2.6 and Theorem 2.9]{zhou2024robustness}. We note that for this setup, a minorization assumption which is typically imposed in average cost optimal control is not applicable. 

\begin{theorem}\label{vanishingproof1}
\begin{itemize}
\item[(i)] The cost function $c: \mathbb{X} \times \mathbb{U} \to \mathbb{R}$ is continuous and bounded, and $\mathbb{U}$ is compact.
\item[(ii)] $\eta$ and $\eta^{{\cal T}_n,Q_n}$ are weak Feller (e.g. under Theorem \ref{Weak_tran_channel_thm_comb}).
\item[(iii)]  ACOE for $({\cal P}(\mathbb{X}), \mathbb{U}, \eta^{{\cal T}_n,Q_n}, \tilde{c})$ is satisfied with a deterministic stationary policy $\gamma_{\eta^{{\cal T}_n,Q_n}}^*$.
\item[(iv)] There exists a constant $\alpha\in(0,1)$ such that $\forall\;\beta\in[\alpha,1)$, $\|J^*_{\beta}(\eta)\|_{\Lip}<\infty$, $\lim_{\beta\to 1} \|J^*_{\beta}(\eta)\|_{\Lip}$  and $\lim_{\beta\to 1} \|J^*_{\beta}(\eta^{{\cal T}_n,Q_n})\|_{\Lip}$ exist, and for any $\pi \in {\cal P}(\mathbb{X})$ there exists a sequence $\beta_{n}(\pi)\to 1$ such that $(1-\beta_{n}(\pi))J_{\beta_{n}(\pi)}(\eta,\gamma^*_{\eta^{{\cal T}_n,Q_n}})(\pi)\to J_{\infty}(\eta,\gamma^*_{\eta^{{\cal T}_n,Q_n}})(\pi)$.
\end{itemize}

Then, 
\begin{align*}
&\left\Vert J^*_{\infty}(\eta)-J_{\infty}(\eta,\gamma^*_{\eta^{{\cal T}_n,Q_n}}) \right\Vert_{\infty}\leq  \left(\lim_{\beta\to 1} \|J^*_{\beta}(\eta)\|_{\Lip} + \lim_{\beta\to 1} \|J^*_{\beta}(\eta^{{\cal T}_n,Q_n})\|_{\Lip} \right)d_{W_{1}}(\eta,\eta^{{\cal T}_n,Q_n}).
\end{align*}
\end{theorem}

Note that the condition $(1-\beta_{n}(\pi))J_{\beta_{n}(\pi)}(\eta,\gamma^*_{\eta^{{\cal T}_n,Q_n}})(\pi)\to J_{\infty}(\eta,\gamma^*_{\eta^{{\cal T}_n,Q_n}})(\pi)$, by the Abelian inequality \cite[Lemma 5.3.1]{HernandezLermaMCP}, imposes essentially a stationary condition: The policy $\gamma^*_{\eta^{{\cal T}_n,Q_n}}$ is to lead to an invariant probability measure. 

\begin{corollary}\label{cont_average}
Suppose Assumption~\ref{main_assumption} holds for both the true and approximate systems. Assume also that $K_2 < 1$ and $K_2^{\T_n, Q_n} < 1$, where $K_2^{\T_n, Q_n}$ is defined for the approximate system as in~\eqref{K2DefWasF}. 
The optimal cost functions satisfy:
    \[
    \left\Vert J^*_{\infty}(\eta) - J^*_{\infty}(\eta^{{\cal T}_n,Q_n}) \right\Vert_{\infty} \leq 
    \frac{K_1}{1 - K_2} \cdot \frac{D+4}{2} \left( d_{TV}(\T_n,\T) + d_{TV}(Q_n,Q) \right).
    \]
 Furthermore, under Assumption~\ref{channel_reg}, we have the refined bound:
    \[
    \left\Vert J^*_{\infty}(\eta) - J^*_{\infty}(\eta^{{\cal T}_n,Q_n}) \right\Vert_{\infty} \leq 
    \frac{K_1}{1 - K_2} \cdot \frac{D+4}{2} \left( L_Q \cdot d_{W_1}(\T_n,\T) + d_{TV}(Q_n,Q) \right).
    \]
\end{corollary}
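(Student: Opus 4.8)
The plan is to pass to the belief-MDP reductions, represent each optimal \emph{average} cost by a vanishing-discount limit, apply the discounted difference bound of Theorem~\ref{upperbound1proof} along the way, and finally substitute the filter-kernel proximity estimates of Theorems~\ref{W1main_1}--\ref{W1main_2}. In effect one first proves the average-cost analogue of Theorem~\ref{upperbound1proof} and then plugs in the kernel bounds, exactly as in Corollaries~\ref{discount_cont} and~\ref{discount_robust}.

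I would begin with the reduction $J^*_\infty(c,\mu,\T,Q)=J^*_\infty(\eta)(\mu)$ and $J^*_\infty(c,\mu,\T_n,Q_n)=J^*_\infty(\eta^{{\cal T}_n,Q_n})(\mu)$. Under Assumption~\ref{main_assumption} with $K_2<1$ and $K_2^{\T_n,Q_n}<1$, Theorem~\ref{mainEmre}(i) applied to each model gives that both belief-MDPs satisfy an ACOE and that the optimal average cost is a constant, say $\rho^*$ and $\rho_n^*$ respectively; hence $\|J^*_\infty(\eta)-J^*_\infty(\eta^{{\cal T}_n,Q_n})\|_\infty=|\rho^*-\rho_n^*|$. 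Also, by Assumption~\ref{main_assumption}(\ref{totalvar}) the transition kernels $\T,\T_n$ are total-variation (hence weakly) continuous, so $\eta$ and $\eta^{{\cal T}_n,Q_n}$ are weak Feller via Theorem~\ref{TV_kernel_thm} (or Theorem~\ref{TV_channel_thm}); and for $\beta$ in a left-neighborhood of $1$ (so that $\beta K_2<1$), Theorem~\ref{ExistenceDiscount} together with the estimate $\|J_\beta(\eta)\|_{\Lip}\le K_1/(1-\beta K_2)$ recorded after Theorem~\ref{upperbound1proof} shows that $\eta$ is Wasserstein-regular with Lipschitz-continuous $J_\beta(\eta)$, with this Lipschitz bound tending to $K_1/(1-K_2)$ as $\beta\uparrow 1$ since $K_2<1$.

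Next I would invoke the vanishing-discount argument underlying Theorem~\ref{mainEmre}(i) (developed in \cite{demirci2023average}): for each fixed $\mu$, $(1-\beta)J^*_\beta(\eta)(\mu)\to\rho^*$ and $(1-\beta)J^*_\beta(\eta^{{\cal T}_n,Q_n})(\mu)\to\rho_n^*$ as $\beta\uparrow 1$, so $(1-\beta)\big(J^*_\beta(\eta)(\mu)-J^*_\beta(\eta^{{\cal T}_n,Q_n})(\mu)\big)\to\rho^*-\rho_n^*$. Applying Theorem~\ref{upperbound1proof} and multiplying by $(1-\beta)$ gives
\[
(1-\beta)\,\| J^*_\beta(\eta)-J^*_\beta(\eta^{{\cal T}_n,Q_n})\|_\infty \;\le\; \frac{\beta K_1}{1-\beta K_2}\, d_{W_1}(\eta,\eta^{{\cal T}_n,Q_n}),
\]
and letting $\beta\uparrow 1$, using $\frac{\beta K_1}{1-\beta K_2}\to\frac{K_1}{1-K_2}$, yields $|\rho^*-\rho_n^*|\le \frac{K_1}{1-K_2}\,d_{W_1}(\eta,\eta^{{\cal T}_n,Q_n})$. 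Substituting Theorem~\ref{W1main_1} (valid since $\X$ is compact with diameter $D$ by Assumption~\ref{main_assumption}), namely $d_{W_1}(\eta,\eta^{{\cal T}_n,Q_n})\le\frac{D+4}{2}\big(d_{TV}(\T_n,\T)+d_{TV}(Q_n,Q)\big)$, gives the first bound; under Assumption~\ref{channel_reg}, Theorem~\ref{W1main_2} replaces $d_{TV}(\T_n,\T)$ by $L_Q\,d_{W_1}(\T_n,\T)$, giving the refined bound.

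The step I expect to demand the most care is the vanishing-discount limit in the third paragraph: establishing that $(1-\beta)J^*_\beta$ converges to precisely the constant optimal average cost of the belief-MDP, for both the true and the approximate models. This rests on the equicontinuity and relative-compactness arguments of the vanishing discount approach of \cite{demirci2023average} (which is exactly what underlies Theorem~\ref{mainEmre}(i)); one must also check that the bound $\|J_\beta(\eta)\|_{\Lip}\le K_1/(1-\beta K_2)$ holds uniformly for $\beta$ in a left-neighborhood of $1$ and passes to the limit, but this is immediate from its explicit, $\beta$-continuous form. Everything else is a routine chaining of Theorems~\ref{upperbound1proof}, \ref{W1main_1} and \ref{W1main_2}, and one should note that, as in Theorem~\ref{vanishingproof1}, no minorization-type contraction hypothesis is needed here.
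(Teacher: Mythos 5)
Your proposal is correct and follows essentially the route the paper intends: the corollary is obtained by applying the vanishing-discount limit (underlying Theorem~\ref{mainEmre}(i) and \cite[Theorem 2.6]{zhou2024robustness}) to the discounted continuity bound of Theorem~\ref{upperbound1proof} with $\|J_\beta(\eta)\|_{\Lip}\le K_1/(1-\beta K_2)$, and then substituting the filter-kernel proximity bounds of Theorems~\ref{W1main_1} and~\ref{W1main_2}. Your observations that $K_2^{\T_n,Q_n}<1$ is needed only to guarantee the approximate model's optimal average cost is a constant, and that only the true model's Lipschitz constant enters the final bound, are both consistent with the stated result.
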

\begin{corollary}\label{robust_average}
Under the same conditions as in Corollary~\ref{cont_average}, assume that for any $\pi \in {\cal P}(\mathbb{X})$ there exists a sequence $\beta_{n}(\pi)\to 1$ such that \[(1-\beta_{n}(\pi))J_{\beta_{n}(\pi)}(\eta,\gamma^*_{\eta^{{\cal T}_n,Q_n}})(\pi)\to J_{\infty}(\eta,\gamma^*_{\eta^{{\cal T}_n,Q_n}})(\pi).\] Then the performance loss due to using the approximate optimal policy in the original system is bounded as follows:
\begin{align*}
&\left\Vert J_{\infty}(\eta,\gamma^*_{\eta^{{\cal T}_n,Q_n}}) - J^*_{\infty}(\eta) \right\Vert_{\infty}\leq \left( \frac{K_1}{1 - K_2}+\frac{K_1}{1 - K_2^{\T_n, Q_n}} \right)  \frac{D+4}{2} \left(  d_{TV}(\T_n,\T) + d_{TV}(Q_n,Q)\right).
\end{align*}
Furthermore, under Assumption~\ref{channel_reg}, we have the refined bound:
\begin{align*}
&\left\Vert J_{\infty}(\eta,\gamma^*_{\eta^{{\cal T}_n,Q_n}}) - J^*_{\infty}(\eta) \right\Vert_{\infty}\leq \left( \frac{K_1}{1 - K_2}+\frac{K_1}{1 - K_2^{\T_n, Q_n}} \right)  \frac{D+4}{2} \left( L_Q d_{W_1}(\T_n,\T) + d_{TV}(Q_n,Q)\right).
\end{align*}
\end{corollary}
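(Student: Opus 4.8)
The plan is to combine the average-cost robustness bound of Theorem~\ref{vanishingproof1} with the explicit filter-kernel proximity estimates of Theorems~\ref{W1main_1} and~\ref{W1main_2}, exactly mirroring how Corollary~\ref{cont_average} is obtained from Theorem~\ref{upperbound2proof}-type reasoning. First I would verify that under Assumption~\ref{main_assumption} with $K_2<1$ (and the same for the approximate model with $K_2^{\T_n,Q_n}<1$), all hypotheses (i)--(iv) of Theorem~\ref{vanishingproof1} are met: items (i) and (ii) are immediate from Assumption~\ref{main_assumption} together with Theorem~\ref{TV_kernel_thm} (total-variation continuity of $\mathcal{T}$ gives weak Feller $\eta$, and the same for $\eta^{\mathcal{T}_n,Q_n}$); item (iii) — existence of a deterministic stationary ACOE-optimal policy $\gamma^*_{\eta^{\mathcal{T}_n,Q_n}}$ — follows from Theorem~\ref{mainEmre}(i) applied to the approximate model since $K_2^{\T_n,Q_n}<1$; and item (iv) is supplied by Theorem~\ref{ergodicity} and \cite[Lemma~4.2]{demirci2023average}, which give $\|J_\beta(\eta)\|_{\Lip}\le \frac{K_1}{1-\beta K_2}$ and $\|J_\beta(\eta^{\mathcal{T}_n,Q_n})\|_{\Lip}\le \frac{K_1}{1-\beta K_2^{\T_n,Q_n}}$ uniformly in $\beta$, so the limits as $\beta\to 1$ exist and equal $\frac{K_1}{1-K_2}$ and $\frac{K_1}{1-K_2^{\T_n,Q_n}}$ respectively; the remaining Abelian/stationarity condition in (iv) is precisely the extra hypothesis assumed in the corollary statement.

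With these in place, Theorem~\ref{vanishingproof1} yields
\begin{align*}
\left\Vert J_{\infty}(\eta,\gamma^*_{\eta^{{\cal T}_n,Q_n}}) - J^*_{\infty}(\eta) \right\Vert_{\infty}
&\leq \Bigl(\lim_{\beta\to1}\|J_\beta(\eta)\|_{\Lip} + \lim_{\beta\to1}\|J_\beta(\eta^{{\cal T}_n,Q_n})\|_{\Lip}\Bigr) d_{W_1}(\eta,\eta^{{\cal T}_n,Q_n})\\
&\leq \Bigl(\frac{K_1}{1-K_2} + \frac{K_1}{1-K_2^{\T_n,Q_n}}\Bigr) d_{W_1}(\eta,\eta^{{\cal T}_n,Q_n}),
\end{align*}
using $J^*_\infty(\eta)=J_\infty(\eta)$ (the optimal average cost is constant, by Theorem~\ref{mainEmre}(i)). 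Then I substitute the bound $d_{W_1}(\eta,\eta^{{\cal T}_n,Q_n})\le \frac{D+4}{2}\bigl(d_{TV}(\mathcal{T}_n,\mathcal{T})+d_{TV}(Q_n,Q)\bigr)$ from Theorem~\ref{W1main_1} (noting $D/2+2=(D+4)/2$) to get the first displayed inequality, and the bound $d_{W_1}(\eta,\eta^{{\cal T}_n,Q_n})\le \frac{D+4}{2}\bigl(L_Q\, d_{W_1}(\mathcal{T}_n,\mathcal{T})+d_{TV}(Q_n,Q)\bigr)$ from Theorem~\ref{W1main_2} (valid under the additional Assumption~\ref{channel_reg}) to get the refined inequality. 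This is entirely a matter of chaining established estimates, so no genuinely new argument is needed.

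The main obstacle — really the only delicate point — is the hypothesis verification for Theorem~\ref{vanishingproof1}(iv), specifically ensuring that the Lipschitz constants of $J_\beta(\eta)$ and $J_\beta(\eta^{\mathcal{T}_n,Q_n})$ stay bounded and convergent as $\beta\to 1$. This requires $\beta K_2<1$ to hold \emph{up to} $\beta=1$, which is exactly why the strict inequalities $K_2<1$ and $K_2^{\T_n,Q_n}<1$ are assumed rather than merely $\beta K_2<1$ for the fixed discount factor; one should be careful that the constant $K_1$ appearing in \cite[Lemma~4.2]{demirci2023average} is the same Lipschitz constant of $c$ for both models (which it is, since $c$ is unchanged), so the two Lipschitz bounds are uniform in $\beta$ and their $\beta\to1$ limits are the stated closed forms. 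The stationarity/Abelian condition is not proved here but carried as an assumption, consistent with how Theorem~\ref{vanishingproof1} and Corollary~\ref{robust_average} are stated; one could remark that it holds automatically whenever $\gamma^*_{\eta^{\mathcal{T}_n,Q_n}}$ induces a unique invariant measure on $\mathcal{P}(\mathbb{X})$, which is guaranteed under the Dobrushin-type contraction implicit in $K_2^{\T_n,Q_n}<1$.
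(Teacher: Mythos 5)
Your proposal is correct and follows exactly the route the paper intends: verify the hypotheses of Theorem~\ref{vanishingproof1}, insert the $\beta\to 1$ limits of the Lipschitz bounds $\frac{K_1}{1-\beta K_2}$ and $\frac{K_1}{1-\beta K_2^{\mathcal{T}_n,Q_n}}$, and then substitute the filter-kernel proximity estimates of Theorems~\ref{W1main_1} and~\ref{W1main_2} for $d_{W_1}(\eta,\eta^{\mathcal{T}_n,Q_n})$. The only minor imprecision is that \cite[Lemma~4.2]{demirci2023average} gives an upper bound on $\|J_\beta(\eta)\|_{\Lip}$ rather than its exact value, so the $\beta\to 1$ limits are merely bounded above by $\frac{K_1}{1-K_2}$ and $\frac{K_1}{1-K_2^{\mathcal{T}_n,Q_n}}$ — which is all the stated inequality requires.
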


\section{Application to Finite Model Approximation Bounds via State and Measurement Quantization}

Here, we show that the analysis leads to explicit bounds via a quantization approach where both the state and measurements are approximated with finite models. 

\subsection{Approximate Finite State Space Model}\label{sec:transition_quant}

In this subsection, we apply the quantization introduced in \cite[Section 3]{SaYuLi15c} and together with the following subsection lead to a finite POMDP model. For simplicity, we assume a finite action space $\mathbb{U}$, noting general compact action spaces can be similarly approximated under weak Feller regularity (see \cite[Chapter 3]{SaLiYuSpringer}). We note that discounted and average cost near-optimality of quantized states were established in \cite{KSYContQLearning,SaLiYuSpringer}. 



Let $\mathbb{X}$ be a compact metric space. 
The quantization scheme considered partitions the state space $\mathbb{X}$ into disjoint Borel sets $\{B_i\}_{i=1}^n$ such that $\bigcup_i B_i = \mathbb{X}$. A finite set of representative states is then defined as
\[
\mathbb{X}_n := \{ x_1, \dots, x_n \},
\]
and the quantization map $\psi_{\mathbb{X}}: \mathbb{X} \rightarrow \mathbb{X}_n$ is defined by
\[
\psi_{\mathbb{X}}(x) = x_i, \quad \text{if } x \in B_i.
\]
Define a reference measure $\mu$ on $\mathbb{X}$ (assuming $\mu(B_i) > 0$) to obtain the normalized restriction:
\[
\mu_i^*(\cdot) := \frac{\mu(\cdot \cap B_i)}{\mu(B_i)}.
\]
The quantized transition kernel $\T_n: \mathbb{X} \times \mathbb{U} \to \mathcal{P}(\X_n)$ is then defined as:
\[
\T_n(x_j \mid x, u) := \int_{B_i} \mathcal{T}(B_j \mid x', u) \mu_i^*(dx'),
\]
for any $x \in B_i$ and $u \in \mathbb{U}$. The approximation error due to quantization is controlled by the quantity
\[
L_{\mathbb{X}_n} := \max_{i \in \{1,\dots, n\}} \sup_{x, x' \in B_i} \|x - x'\|.
\]

We then have the following result:

\begin{lemma}
    Suppose that $\X$ and $\U$ are compact and that the stochastic kernel $\mathcal{T}(\cdot \mid x, u)$ is weakly continuous in $(x, u)$.  
    Then, as $L_{\mathbb{X}_n} \to 0$, 
    \[
    d_{W_{1}}(\mathcal{T}, {\mathcal{T}}_n) \to 0.
    \]
\end{lemma}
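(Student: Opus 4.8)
The plan is to bound $d_{W_1}(\mathcal{T},\mathcal{T}_n)$ by the quantization resolution $L_{\mathbb{X}_n}$ plus a term that vanishes by weak continuity and compactness, and then observe that $L_{\mathbb{X}_n}\to 0$ and the weak-continuity term $\to 0$ together give the result. Concretely, fix $(x,u)$ with $x\in B_i$. By the triangle inequality for $W_1$,
\[
W_1\bigl(\mathcal{T}(\cdot\mid x,u),\,\mathcal{T}_n(\cdot\mid x,u)\bigr)\le W_1\bigl(\mathcal{T}(\cdot\mid x,u),\,\mathcal{T}(\cdot\mid x_i,u)\bigr)+W_1\bigl(\mathcal{T}(\cdot\mid x_i,u),\,\mathcal{T}_n(\cdot\mid x,u)\bigr).
\]
For the second term, note $\mathcal{T}_n(\cdot\mid x,u)=\int_{B_i}\mathcal{T}(\cdot\mid x',u)\,\mu_i^*(dx')$ is a mixture of the measures $\{\mathcal{T}(\cdot\mid x',u):x'\in B_i\}$, so by convexity of $W_1$ in each argument (Jensen) it is within $\sup_{x'\in B_i}W_1(\mathcal{T}(\cdot\mid x_i,u),\mathcal{T}(\cdot\mid x',u))$ of $\mathcal{T}(\cdot\mid x_i,u)$. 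Hence
\[
W_1\bigl(\mathcal{T}(\cdot\mid x,u),\,\mathcal{T}_n(\cdot\mid x,u)\bigr)\le 2\sup_{x'\in B_i}\,W_1\bigl(\mathcal{T}(\cdot\mid x,u),\,\mathcal{T}(\cdot\mid x',u)\bigr)\quad(\text{for }x,x'\in B_i),
\]
and taking the supremum over $i$, $x\in B_i$, and $u$ yields
\[
d_{W_1}(\mathcal{T},\mathcal{T}_n)\le 2\sup_{u\in\mathbb{U}}\ \sup_{\substack{x,x'\in\mathbb{X}\\ d(x,x')\le L_{\mathbb{X}_n}}} W_1\bigl(\mathcal{T}(\cdot\mid x,u),\,\mathcal{T}(\cdot\mid x',u)\bigr).
\]

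It then remains to show the right-hand side tends to $0$ as $L_{\mathbb{X}_n}\to 0$. This is where compactness does the work: on the compact metric space $\mathbb{X}\times\mathbb{U}$, the map $(x,u)\mapsto\mathcal{T}(\cdot\mid x,u)$ is weakly continuous, hence uniformly weakly continuous, and since $\mathbb{X}$ is compact the $W_1$ metric metrizes weak convergence on $\mathcal{P}(\mathbb{X})$; therefore $(x,u)\mapsto\mathcal{T}(\cdot\mid x,u)$ is uniformly continuous as a map into $(\mathcal{P}(\mathbb{X}),W_1)$. Uniform continuity gives a modulus $\omega$ with $W_1(\mathcal{T}(\cdot\mid x,u),\mathcal{T}(\cdot\mid x',u))\le\omega(d(x,x'))$ and $\omega(r)\to 0$ as $r\to 0$, so the displayed bound is at most $2\,\omega(L_{\mathbb{X}_n})\to 0$.

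The main obstacle — really the only non-routine point — is justifying the passage from pointwise weak continuity to uniform continuity in $W_1$ on the product space. Two ingredients must be assembled carefully: (i) that $W_1$ metrizes weak convergence on $\mathcal{P}(\mathbb{X})$ when $\mathbb{X}$ is compact (standard, since bounded Lipschitz test functions suffice and $\mathbb{X}$ bounded), and (ii) that a continuous function from a compact space into a metric space is uniformly continuous (Heine–Cantor), which requires knowing $\mathbb{X}\times\mathbb{U}$ is compact — guaranteed here since both $\mathbb{X}$ and $\mathbb{U}$ are compact. A small amount of care is also needed because the inner supremum is over pairs $x,x'$ lying in a common partition cell rather than all pairs within distance $L_{\mathbb{X}_n}$; enlarging to all pairs with $d(x,x')\le L_{\mathbb{X}_n}$ is a harmless over-estimate and keeps the argument clean. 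Everything else — the triangle inequality and the Jensen/convexity step for $W_1$ — is routine.
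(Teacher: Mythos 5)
There is one genuine gap: you identify $\mathcal{T}_n(\cdot\mid x,u)$ with the mixture $\int_{B_i}\mathcal{T}(\cdot\mid x',u)\,\mu_i^*(dx')$, but that is not how the paper defines the quantized kernel. As defined, $\mathcal{T}_n$ takes values in $\mathcal{P}(\mathbb{X}_n)$: it first averages $\mathcal{T}(B_j\mid x',u)$ over $x'\in B_i$ and then collapses all of the mass in each cell $B_j$ onto the representative point $x_j$. The object you work with is only the \emph{intermediate} kernel $\widetilde{\mathcal{T}}_n(\cdot\mid x,u):=\int_{B_i}\mathcal{T}(\cdot\mid x',u)\,\mu_i^*(dx')$, which the paper introduces explicitly in the proof of the neighbouring Lipschitz-rate lemma. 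Consequently your displayed bound
\[
d_{W_1}(\mathcal{T},\mathcal{T}_n)\le 2\sup_{u\in\mathbb{U}}\ \sup_{d(x,x')\le L_{\mathbb{X}_n}} W_1\bigl(\mathcal{T}(\cdot\mid x,u),\mathcal{T}(\cdot\mid x',u)\bigr)
\]
is missing the transport cost of that collapse. The fix is cheap: since each mass element in $B_j$ is moved to $x_j$ over a distance at most $L_{\mathbb{X}_n}$, one has $W_1\bigl(\mathcal{T}_n(\cdot\mid x,u),\widetilde{\mathcal{T}}_n(\cdot\mid x,u)\bigr)\le L_{\mathbb{X}_n}$ (this is exactly the term $A_2$ in the paper's proof of the Lipschitz version), so the correct bound carries an additional additive $L_{\mathbb{X}_n}$, which also vanishes and does not affect the conclusion. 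But as written the proof argues about the wrong kernel.

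The remainder of your argument is sound and, in fact, supplies the content the paper leaves unproved (the lemma is stated without proof; only the Lipschitz-rate variant is proved). The Jensen/convexity step for the mixture, the enlargement from ``same cell'' to ``$d(x,x')\le L_{\mathbb{X}_n}$'', and the passage from pointwise weak continuity to uniform continuity into $(\mathcal{P}(\mathbb{X}),W_1)$ via Heine--Cantor on the compact product $\mathbb{X}\times\mathbb{U}$, using that $W_1$ metrizes weak convergence when $\mathbb{X}$ is compact, are all correct and are presumably the intended route. Compared with the paper's proof of the Lipschitz variant, you replace the explicit rate $\alpha\,d(x',x)$ by a modulus of continuity $\omega(L_{\mathbb{X}_n})$, which is the natural generalization; once you reinstate the $+L_{\mathbb{X}_n}$ term for the cell-collapse, your argument matches the paper's decomposition $A_1+A_2$ with $A_1$ controlled by uniform weak continuity instead of a Lipschitz constant.
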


If $\mathcal{T}(\cdot | x, u)$ is $\alpha$-Lipschitz in $x$, we obtain an explicit rate (see also \cite[Lemma 3.2]{devranby2025near}):

  \begin{lemma}\label{quantizedT}
        Suppose Assumption~\ref{weakTtvQweakTtvQ2}(i)-(ii) holds. In particular, assume that the transition kernel $\mathcal{T}(\cdot \mid x, u)$ is $\alpha$-Lipschitz continuous in $x$ uniformly over $u$. Then, the quantized kernel $\mathcal{T}_n$ satisfies
        \[
        d_{W_{1}}(\mathcal{T}, \mathcal{T}_n)  \leq (\alpha+1)L_{\mathbb{X}_n}.
        \]
        \end{lemma}
        
\begin{proof}
        Define the intermediate kernel
        \[
        \widetilde{\mathcal{T}}_n(\cdot \mid x, u) := \int_{B_i} \mathcal{T}(\cdot \mid x', u) \, \mu_i^*(dx'),
        \]
        for any $x \in B_i$,
        where $\mu_i^*$ is the normalized restriction of a reference measure $\mu$ to $B_i$.
        
        We split the approximation error as follows:
        \[
        W_1\left( \mathcal{T}_n(\cdot \mid x, u), \mathcal{T}(\cdot \mid x, u) \right) \leq A_1 + A_2,
        \]
        where
        \begin{align*}
        A_1 &:= W_1\left( \widetilde{\mathcal{T}}_n(\cdot \mid x, u), \mathcal{T}(\cdot \mid x, u) \right),\quad A_2:= W_1\left( \mathcal{T}_n(\cdot \mid x, u), \widetilde{\mathcal{T}}_n(\cdot \mid x, u) \right).
        \end{align*}       
        Using Jensen’s inequality and Lipschitz continuity of $\mathcal{T}$ in $x$, we get:
        \begin{align*}
        A_1 &\leq \sum \int_{B_i} W_1\left( \mathcal{T}(\cdot \mid x', u), \mathcal{T}(\cdot \mid x, u) \right) \mu_i^*(dx') \\
        &\leq \sum \int_{B_i} \alpha \cdot d(x', x) \, \mu_i^*(dx') \leq \alpha \cdot \sup_{x' \in B_i} d(x', x) \leq \alpha L_{\mathbb{X}_n},
        \end{align*}
        since $x$ belongs to $B_i$.
    
        Note that $\mathcal{T}_n(\cdot \mid x, u)$ and $\widetilde{\mathcal{T}}_n(\cdot \mid x, u)$ differ only in their redistribution over the discrete image space $\X_n$. In particular, define $\mathcal{T}_n^j$ and $\widetilde{\mathcal{T}}_n^j$ as the restrictions of the respective kernels to cell $B_j$.
        Each mass in $B_j$ is moved to its representative point $x_{j}$ with a maximum shift of $\sup_{x \in B_j} d(x, x_{j}) \leq L_{\mathbb{X}_n}$. We then  have \cite[Theorem 2.6]{Kre11}:
        \[
        A_2 \leq \sum_{j=1}^{k_n} \int_{B_j} d(y, x_{n,j}) \, \widetilde{\mathcal{T}}_n(dy \mid x, u) \leq L_{\mathbb{X}_n}.
        \]
        Combining both terms:
        \[
        W_1\left( \mathcal{T}_n(\cdot \mid x, u), \mathcal{T}(\cdot \mid x, u) \right) \leq (\alpha+1)L_{\mathbb{X}_n}.
        \]
\end{proof}
We now show that by discretizing (quantizing) the state space $\mathbb{X}$, one can construct approximate models whose optimal policies converge to near-optimal solutions for the original POMDP.

\begin{lemma}\label{quantizedeta}
    Suppose that $\mathbb{X}$ is compact with diameter $D$. Under Assumption \ref{channel_reg}, we have:
    \[
    d_{W_{1}}\big(\eta^{{\cal T},Q},\eta^{{\cal T}_n,Q}\big) \to 0 \quad \text{as} \ n \to \infty.
    \]
\end{lemma}

\begin{proof}
    By Theorem \ref{W1main_2}, we obtain:
    \(
    d_{W_{1}}\big(\eta^{{\cal T},Q},\eta^{{\cal T}_n,Q}\big) \leq (D/2 + 2) L_Q \, d_{W_{1}}(\T_n, \T) \to 0\)  as \( n \to \infty\).
\end{proof}

The following is a direct consequence of Lemma \ref{quantizedT} and the proof of Lemma \ref{quantizedeta}.
  
        \begin{lemma}
    Under Assumptions \ref{channel_reg} and \ref{weakTtvQweakTtvQ2}(i)-(ii) we have:
    \[
    d_{W_{1}}\big(\eta^{{\cal T},Q},\eta^{{\cal T}_n,Q}\big) \leq (D/2 + 2) L_Q \, (\alpha+1)L_{\mathbb{X}_n}
    \]
\end{lemma}
        
Using these results and Corollary \ref{discount_robust}, we show that the policy obtained from the quantized model is near-optimal.

\begin{corollary}
    Under Assumptions \ref{main_assumption} and \ref{channel_reg}, with $ K_2 < 1 $, we have:
    \[
    \left\Vert J_{\beta}\big(\eta,\gamma^*_{\eta^{{\cal T}_n,Q}}\big) - J^*_{\beta}(\eta) \right\Vert_{\infty} \leq \frac{2\beta K_1}{(1-\beta)^2(1-\beta K_2)}\frac{D+4}{2} L_Q \, (\alpha+1)L_{\mathbb{X}_n}.
    \]
\end{corollary}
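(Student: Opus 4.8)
The plan is to chain two results already in hand: the robustness bound of Corollary~\ref{discount_robust} and the explicit quantization error estimate $d_{W_1}(\mathcal{T},\mathcal{T}_n)\le(\alpha+1)L_{\mathbb{X}_n}$ from the preceding lemma. Here only the state space is quantized, so the observation channel is left unchanged, $Q_n=Q$, and hence $d_{TV}(Q_n,Q)=0$; moreover Assumption~\ref{channel_reg} holds with constant $L_Q$ for the fixed channel $Q$. Under these identifications the refined bound of Corollary~\ref{discount_robust} reduces to
\[
\bigl\|J_{\beta}(\eta,\gamma^*_{\eta^{\mathcal{T}_n,Q}})-J^*_{\beta}(\eta)\bigr\|_{\infty}
\le \frac{2\beta K_1}{(1-\beta)^2(1-\beta K_2)}\,\frac{D+4}{2}\,L_Q\, d_{W_1}(\mathcal{T}_n,\mathcal{T}),
\]
and substituting $d_{W_1}(\mathcal{T}_n,\mathcal{T})\le(\alpha+1)L_{\mathbb{X}_n}$ (noting that the $\alpha$-Lipschitz-in-total-variation hypothesis of Assumption~\ref{main_assumption}-\ref{regularity} yields $W_1$-Lipschitz continuity of $\mathcal{T}$ on the diameter-$D$ space $\mathbb{X}$, which is what the lemma requires) gives precisely the claimed inequality.

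What remains is to check that all hypotheses of Corollary~\ref{discount_robust} hold. Assumption~\ref{main_assumption} and $K_2<1$ are assumed; the weak Feller property of the true filter kernel $\eta$ follows from Theorem~\ref{TV_kernel_thm} (or Theorem~\ref{TV_channel_thm}); and the Wasserstein regularity of $\eta$ together with Lipschitz continuity of $J_\beta(\eta)$ with constant $K_1/(1-\beta K_2)$ is exactly what Theorem~\ref{ergodicity} and \cite[Lemma 4.2]{demirci2023average} provide. The one condition needing care is the weak Feller property of the approximate filter kernel $\eta^{\mathcal{T}_n,Q}$: the kernel $\mathcal{T}_n$, regarded on the original space $\mathbb{X}$, is piecewise constant on the cells $B_i$ and therefore not continuous, so Theorems~\ref{TV_kernel_thm}--\ref{TV_channel_thm} cannot be invoked directly on $\mathbb{X}$. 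I would resolve this by passing to the finite POMDP on the quantized state space $\mathbb{X}_n$: a finite metric space carries the discrete topology, so continuity in the state is automatic and $\mathcal{T}_n(\cdot\mid\cdot,u)$ is trivially continuous in total variation, while $Q$ is control-free; Theorem~\ref{TV_kernel_thm} then gives weak Feller continuity of the associated belief kernel on $\mathcal{P}(\mathbb{X}_n)$, which is the object to which Corollary~\ref{discount_robust} is applied.

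Thus the main obstacle is not computational but interpretive — ensuring the approximate belief kernel is set up on the finite state space $\mathbb{X}_n$ (where the regularity hypotheses hold vacuously) rather than on $\mathbb{X}$; once this is settled the bound follows by direct substitution. A secondary bookkeeping point is the mild notational clash between the total-variation Lipschitz constant $\alpha$ of Assumption~\ref{main_assumption}-\ref{regularity} and the Wasserstein Lipschitz constant appearing in $(\alpha+1)L_{\mathbb{X}_n}$; the two differ by at most a factor of $D$, and I would either absorb this into the displayed constant or state the bound directly with the Wasserstein Lipschitz constant of $\mathcal{T}$, consistent with the preceding corollary.
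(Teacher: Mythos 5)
Your proposal is correct and follows essentially the same route the paper intends: the corollary is obtained by setting $Q_n=Q$ (so $d_{TV}(Q_n,Q)=0$) and substituting the quantization estimate $d_{W_1}(\mathcal{T}_n,\mathcal{T})\le(\alpha+1)L_{\mathbb{X}_n}$ into the refined bound of Corollary~\ref{discount_robust}. The two caveats you raise --- verifying the weak Feller property of $\eta^{\mathcal{T}_n,Q}$ by viewing the quantized model on the finite space $\mathbb{X}_n$, and the clash between the total-variation Lipschitz constant $\alpha$ of Assumption~\ref{main_assumption} and the Wasserstein Lipschitz constant required by the quantization lemma (which differ by a factor of $D/2$ on a diameter-$D$ space) --- are genuine gaps in the paper's stated hypotheses, and your handling of both is appropriate.
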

A similar result applies for the average cost case:
\begin{corollary}
    Under Assumption \ref{channel_reg}, suppose Assumption \ref{main_assumption} holds for both the true and the approximate systems, and that $ K_2 < 1 $ and $ K_2^{\T_n, Q} < 1 $ (as defined in \eqref{K2DefWasF} for the approximate system). Furthermore, for any $\pi \in \mathcal{P}(\mathbb{X})$, suppose there exists a sequence $\beta_{n}(\pi) \to 1$ such that:
    \[
    (1-\beta_{n}(\pi)) J_{\beta_{n}(\pi)}\big(\eta,\gamma^*_{\eta^{{\cal T}_n,Q}}\big)(\pi) \to J_{\infty}\big(\eta,\gamma^*_{\eta^{{\cal T}_n,Q}}\big)(\pi).
    \]
    Then, we have:
    \begin{align*}
&\left\Vert J_{\infty}(\eta,\gamma^*_{\eta^{{\cal T}_n,Q_n}}) - J^*_{\infty}(\eta) \right\Vert_{\infty}\leq \left( \frac{K_1}{1 - K_2}+\frac{K_1}{1 - K_2^{\T_n, Q_n}} \right)  \frac{D+4}{2} L_Q \, (\alpha+1)L_{\mathbb{X}_n}.
\end{align*}
\end{corollary}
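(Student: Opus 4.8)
The plan is to read off the bound as a direct specialization of the average-cost robustness estimate in Corollary~\ref{robust_average} to the present setting, in which only the state space is quantized so that the observation kernel is left intact, $Q_n = Q$ (hence $d_{TV}(Q_n,Q)=0$), and then to substitute the explicit Wasserstein quantization rate for the transition kernel established in the preceding lemmas.

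Concretely, I would proceed in four steps. First, I would check that the hypotheses of Corollary~\ref{robust_average} (equivalently of Theorem~\ref{vanishingproof1}) hold for the pair $(\eta^{\mathcal{T},Q},\eta^{\mathcal{T}_n,Q})$: the cost $c$ is bounded, continuous and Lipschitz and $\mathbb{U}$ is compact by Assumption~\ref{main_assumption}; $\eta^{\mathcal{T},Q}$ and $\eta^{\mathcal{T}_n,Q}$ are weak Feller because $\mathcal{T}$ and $\mathcal{T}_n$ are continuous in total variation with $Q$ unchanged (Theorem~\ref{TV_kernel_thm}, and Theorem~\ref{ergodicity} further gives Wasserstein regularity); Assumption~\ref{main_assumption} for the approximate system together with $K_2^{\mathcal{T}_n,Q}<1$ yields, via \cite[Lemma 4.2]{demirci2023average}, Lipschitz continuity of $J_\beta(\eta^{\mathcal{T}_n,Q})$ with constant $K_1/(1-\beta K_2^{\mathcal{T}_n,Q})$, and likewise for $\eta^{\mathcal{T},Q}$ under $K_2<1$; and the vanishing-discount stationarity condition on $\gamma^*_{\eta^{\mathcal{T}_n,Q}}$ is part of the hypothesis. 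Second, I would apply the refined bound of Corollary~\ref{robust_average}; since $d_{TV}(Q,Q)=0$, this gives
\[
\left\Vert J_{\infty}(\eta,\gamma^*_{\eta^{\mathcal{T}_n,Q}}) - J^*_{\infty}(\eta) \right\Vert_{\infty}
\leq \left( \frac{K_1}{1 - K_2}+\frac{K_1}{1 - K_2^{\mathcal{T}_n,Q}} \right)\frac{D+4}{2}\, L_Q\, d_{W_1}(\mathcal{T}_n,\mathcal{T}).
\]
Third, I would insert the quantization rate $d_{W_1}(\mathcal{T}_n,\mathcal{T}) \leq (\alpha+1)L_{\mathbb{X}_n}$ from the Lipschitz quantized-kernel lemma, which applies since Assumption~\ref{main_assumption}-\ref{regularity} provides Lipschitz continuity of $\mathcal{T}(\cdot\mid x,u)$ in $x$. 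Fourth, combining these yields the claimed inequality.

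The chaining of inequalities is routine; the points that require care are bookkeeping ones. First, one must verify that the finite model $(\mathcal{T}_n,Q)$ genuinely inherits Assumption~\ref{main_assumption} — in particular that $\mathcal{T}_n$ is continuous in total variation, admits a finite Lipschitz constant for the regularity condition, and hence satisfies $K_2^{\mathcal{T}_n,Q}<1$ — rather than merely positing it; the cleanest route is to observe that $\mathcal{T}_n(\cdot\mid x,u)$ depends on $x$ only through the cell $B_i\ni x$, so its modulus of continuity is controllable in terms of $\alpha$ and the partition diameter $L_{\mathbb{X}_n}$. Second, Assumption~\ref{main_assumption}-\ref{regularity} is stated in the total variation metric, whereas the quantization lemma asks for $W_1$-Lipschitz continuity; passing from one to the other costs a factor of the diameter $D$, so to land exactly on the constant $(\alpha+1)L_{\mathbb{X}_n}$ one should either read $\alpha$ as a $W_1$-Lipschitz constant (as in Assumption~\ref{weakTtvQ}) or absorb the extra $D$ into the constant. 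Once these are settled, the statement follows immediately.
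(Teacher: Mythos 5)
Your proposal is correct and follows essentially the same route the paper intends: specialize Corollary~\ref{robust_average} to the case $Q_n = Q$ (so the $d_{TV}(Q_n,Q)$ term vanishes) and substitute the quantization rate $d_{W_1}(\mathcal{T}_n,\mathcal{T}) \leq (\alpha+1)L_{\mathbb{X}_n}$, equivalently the filter bound $d_{W_1}(\eta^{\mathcal{T},Q},\eta^{\mathcal{T}_n,Q}) \leq (D/2+2)L_Q(\alpha+1)L_{\mathbb{X}_n}$ from the preceding corollary. Your two bookkeeping caveats are well taken but do not obstruct the argument: the corollary's hypotheses already posit Assumption~\ref{main_assumption} for the approximate system, and the quantization lemma is stated under the $W_1$-Lipschitz condition of Assumption~\ref{weakTtvQ}, so reading $\alpha$ there as the $W_1$ modulus (as you suggest) is the intended resolution.
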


\subsection{Approximate Finite Measurement Space  Model}\label{sec:observation_quant}

We now investigate the effect of quantizing the measurement space. 
The quantization scheme considered partitions the observation space $\mathbb{Y}$ into disjoint Borel sets $\{B_i\}_{i=1}^n$ such that $\bigcup_i B_i = \mathbb{Y}$. A finite set of representative observations is then defined as
\[
\mathbb{Y}_n := \{ y_1, \dots, y_n \},
\]
and the quantization map $\psi_{\mathbb{Y}}: \mathbb{Y} \rightarrow \mathbb{Y}_n$ is defined by
\[
\psi_{\mathbb{Y}}(y) = y_i, \quad \text{if } y \in B_i.
\]
Using this mapping, a new POMDP model is constructed with a quantized observation channel $Q_n$ defined by
\[
Q_n(y_i \mid x) := Q(B_i \mid x),
\]
where $Q$ is the original observation channel. Given an initial distribution $\mu$ over the state space, the optimal cost under the quantized channel is defined as
\[
J_\beta^*(\mu, Q_n) := \inf_{\widehat{\gamma} \in \widehat{\Gamma}} J_\beta(\mu, Q_n, \widehat{\gamma}),
\]
where the policies $\widehat{\gamma}$ are admissible with respect to (finite) $\mathbb{Y}_n$-valued measurements.
\begin{assumption}\label{quantMeasurementL}
Suppose that $\mathbb{Y} \subset \mathbb{R}^n$ is compact and $Q(dy \mid x) = g(x, y)\lambda(dy)$, where $g(x, y)$ is Lipschitz continuous in $y$ with constant $\alpha_{\mathbb{Y}}$, i.e.,
\begin{align}
|g(x, y) - g(x, y')| \leq \alpha_{\mathbb{Y}} \|y - y'\|, \quad \forall y, y' \in \mathbb{Y}, \; x \in \mathbb{X}.\label{}
\end{align}
\end{assumption}
To facilitate the analysis, we define an intermediate channel $\tilde{Q}_n$ with the density function, say $\tilde{g}$, with respect to $\lambda$, such that for some $y' \in B_i$
\begin{align*}
\tilde{g}(x,y')=\frac{Q(B_i|x)}{\lambda(B_i)}=\frac{1}{\lambda(B_i)}\int_{B_i}g(x,y)\lambda(dy).
\end{align*}
This channel and the discretized channel achieve the same value function as they are informationally equivalent \cite{devranby2025near}. The following result from \cite{YukselOptimizationofChannels} and \cite{devranby2025near} provides an explicit upper bound on the performance degradation due to observation quantization:
The approximation error due to quantization is controlled by the quantity
\[
L_{\mathbb{Y}_n} := \max_{i \in \{1,\dots, n\}} \sup_{y, y' \in B_i} \|y - y'\|.
\]
Under Assumption~\ref{quantMeasurementL}, it is shown in \cite[p. 257]{devranby2025near} that the total variation distance between the original channel $Q$ and the quantized version $\tilde{Q}_n$ satisfies $d_{TV}(\tilde{Q}_n, Q)=\sup_x \|\tilde{Q}_n(dy|x)-Q(dy | x)\|_{TV} \leq \alpha_{\mathbb{Y}} L_{\mathbb{Y}_n}$. Notably, given the total variation bounds we have
\cite[Theorem 5.2]{devranby2025near}:
\begin{align}\label{observation_cont}
J_\beta^*(\mu, Q_n) - J_\beta^*(\mu, Q) \leq \frac{\beta}{(1 - \beta)^2} \|c\|_\infty \alpha_{\mathbb{Y}} L_{\mathbb{Y}_n}.
\end{align}
Note that the optimal policy achieved under the degraded channel is equivalent to the value of the policy under $\hat{\gamma}$ with original channel.
Hence, we have that
\begin{align}\label{observation_robust}
J_\beta(\mu, Q, \hat{\gamma}) - J_\beta^*(\mu, Q) \leq \frac{\beta}{(1 - \beta)^2} \|c\|_\infty \alpha_{\mathbb{Y}} L_{\mathbb{Y}_n},
\end{align}
where $\hat{\gamma}$ denotes the optimal policy for the approximate model. Alternatively, using Corollaries~\ref{discount_cont} and~\ref{discount_robust}, we obtain the following bounds:
\begin{corollary}
    Under Assumption \ref{main_assumption} and Assumption
    \ref{quantMeasurementL},
\begin{align*}
& J_\beta^*(\mu, Q_n) - J_\beta^*(\mu, Q) \leq 
\frac{\beta}{1-\beta} \frac{K_1}{1 - \beta K_2} \frac{D+4}{2}  \alpha_{\mathbb{Y}} L_{\mathbb{Y}_n}
\end{align*}
and
\begin{align*}
J_\beta(\mu, Q, \hat{\gamma}) - J_\beta^*(\mu, Q)
\leq
\frac{\beta K_1}{(1-\beta)^2(1-\beta K_2)}\frac{D+4}{2}\alpha_{\mathbb{Y}} L_{\mathbb{Y}_n}.
\end{align*}
where $\hat{\gamma}$ denotes the optimal policy corresponding to the approximate model. 
\end{corollary}
\begin{corollary}
Under Assumption~\ref{main_assumption} and Assumption~\ref{quantMeasurementL}, suppose that $K_2 < 1$. Then, the difference between the optimal average cost functions for the true and approximate models is bounded as:
\[
\left\Vert J^*_{\infty}(\eta) - J^*_{\infty}(\eta^{{\cal T},Q_n}) \right\Vert_{\infty} 
\leq \frac{K_1}{1 - K_2} \cdot \frac{D+4}{2}  \alpha_{\mathbb{Y}} L_{\mathbb{Y}_n}.
\]

Furthermore, assume that for every initial belief $\pi \in {\cal P}(\mathbb{X})$, there exists a sequence $\beta_n(\pi) \to 1$ such that
\[
(1 - \beta_n(\pi)) J_{\beta_n(\pi)}(\eta, \gamma^*_{\eta^{{\cal T}, Q_n}})(\pi) 
\to J_{\infty}(\eta, \gamma^*_{\eta^{{\cal T}_n, Q_n}})(\pi).
\]
Then, the performance loss incurred by using the approximate optimal policy in the true model satisfies the bound:
\[
\left\Vert J_{\infty}(\eta, \gamma^*_{\eta^{{\cal T}, Q_n}}) - J^*_{\infty}(\eta) \right\Vert_{\infty}
\leq \left( \frac{K_1}{1 - K_2} + \frac{K_1}{1 - K_2^{\T, Q_n}} \right) 
\cdot \frac{D+4}{2}  \alpha_{\mathbb{Y}} L_{\mathbb{Y}_n}.
\]
\end{corollary}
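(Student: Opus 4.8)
The plan is to derive both bounds from the average-cost results already in hand—Corollary~\ref{cont_average} and Corollary~\ref{robust_average}—applied with the ``approximate'' model taken to be $(\mathcal{T},\tilde{Q}_n)$: the transition kernel is kept identical to the true one and only the observation channel is replaced by the intermediate density channel $\tilde{Q}_n$. Since the two transition kernels coincide, $d_{TV}(\mathcal{T},\mathcal{T})=d_{W_1}(\mathcal{T},\mathcal{T})=0$, so every transition term in those corollaries vanishes and they collapse to $\|J^*_\infty(\eta)-J^*_\infty(\eta^{\mathcal{T},\tilde{Q}_n})\|_\infty\le \frac{K_1}{1-K_2}\frac{D+4}{2}\,d_{TV}(\tilde{Q}_n,Q)$ and, for the policy-mismatch cost, to the same expression with the constant $\frac{K_1}{1-K_2}+\frac{K_1}{1-K_2^{\mathcal{T},\tilde{Q}_n}}$. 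Inserting the estimate $d_{TV}(\tilde{Q}_n,Q)\le \alpha_{\mathbb{Y}}L_{\mathbb{Y}_n}$ recalled above (from Assumption~\ref{quantMeasurementL} and \cite[p.~257]{devranby2025near}) produces exactly the two right-hand sides in the statement.

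Two checks make this reduction legitimate. First, Assumption~\ref{main_assumption} must hold for the pair $(\mathcal{T},\tilde{Q}_n)$: compactness of $\mathbb{X}$, total-variation continuity and Lipschitz regularity of $\mathcal{T}$, and boundedness/Lipschitzness of $c$ are inherited verbatim from the true model, so the only nontrivial point is $K_2^{\mathcal{T},\tilde{Q}_n}=\frac{\alpha D(3-2\delta(\tilde{Q}_n))}{2}<1$. I would obtain this by observing that $\tilde{Q}_n$ is a garbling of $Q$: one reads which cell $B_i$ the observation lands in and then resamples within $B_i$ according to the normalized reference measure, a Markov post-processing independent of $x$; hence $\sup_{x,x'}\|\tilde{Q}_n(\cdot\mid x)-\tilde{Q}_n(\cdot\mid x')\|_{TV}\le \sup_{x,x'}\|Q(\cdot\mid x)-Q(\cdot\mid x')\|_{TV}$, so $\delta(\tilde{Q}_n)\ge\delta(Q)$ and $K_2^{\mathcal{T},\tilde{Q}_n}\le K_2<1$. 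This also yields weak Feller continuity of $\eta^{\mathcal{T},\tilde{Q}_n}$ (Theorem~\ref{TV_kernel_thm}) and the Lipschitz bound $\|J_\beta(\eta^{\mathcal{T},\tilde{Q}_n})\|_{\Lip}\le K_1/(1-\beta K_2^{\mathcal{T},\tilde{Q}_n})$ uniform in $\beta$ near $1$—precisely the remaining structural hypotheses behind Corollaries~\ref{cont_average}--\ref{robust_average}; the vanishing-discount/Abelian condition for $\gamma^*_{\eta^{\mathcal{T},Q_n}}$ is part of the statement.

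Second, the genuinely finite channel $Q_n(y_i\mid x)=Q(B_i\mid x)$ must be exchanged for $\tilde{Q}_n$. These channels are informationally equivalent in the sense used around \eqref{observation_cont} and in \cite{devranby2025near}: every $\mathbb{Y}_n$-valued admissible policy induces, under $\tilde{Q}_n$, an $\mathbb{Y}$-valued policy with the same joint law of $(X_k,U_k)_k$, and conversely. Hence $J^*_\infty(\eta^{\mathcal{T},Q_n})=J^*_\infty(\eta^{\mathcal{T},\tilde{Q}_n})$, $K_2^{\mathcal{T},Q_n}=K_2^{\mathcal{T},\tilde{Q}_n}$, and $J_\infty(\eta,\gamma^*_{\eta^{\mathcal{T},Q_n}})=J_\infty(\eta,\gamma^*_{\eta^{\mathcal{T},\tilde{Q}_n}})$; substituting these identities into the two displays of the first paragraph gives both asserted bounds.

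The routine content—killing the transition terms and inserting $\alpha_{\mathbb{Y}}L_{\mathbb{Y}_n}$—is immediate. I expect the delicate steps to be the two checks just described: verifying $K_2^{\mathcal{T},\tilde{Q}_n}<1$ via the Dobrushin-coefficient monotonicity $\delta(\tilde{Q}_n)\ge\delta(Q)$ under garbling, and ensuring the informational equivalence is strong enough to transport not just the optimal value but also the true-model performance of the approximate optimal policy (and the validity of its assumed vanishing-discount limit) from the $\tilde{Q}_n$-model to the $Q_n$-model.
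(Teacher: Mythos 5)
Your proposal is correct and follows the route the paper intends: the corollary is obtained by specializing Corollaries~\ref{cont_average} and~\ref{robust_average} to the approximate model $(\mathcal{T},\tilde{Q}_n)$, so the transition terms vanish and $d_{TV}(\tilde{Q}_n,Q)\le\alpha_{\mathbb{Y}}L_{\mathbb{Y}_n}$ yields exactly the stated constants. Your two additional checks---the Dobrushin monotonicity $\delta(\tilde{Q}_n)\ge\delta(Q)$ under garbling, giving $K_2^{\mathcal{T},\tilde{Q}_n}\le K_2<1$, and the informational equivalence of $Q_n$ and $\tilde{Q}_n$ needed to transport both the optimal value and the mismatched-policy performance---are sound and make explicit hypotheses that the paper leaves implicit.
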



%
%

\subsection{Finite-POMDP Approximation via Joint Quantization of State and Measurement Spaces}

As a further primary contribution of our paper, we now combine the results of the preceding two subsections to obtain an explicit performance bound for the case when both the transition kernel and observation channel are jointly quantized. That is, we consider approximate models of the form $(\mathcal{T}_n, Q_n)$ constructed via quantization of both the state and measurement spaces. This leads to a finite model approximation, suitable both for numerical and learning theoretic methods. 

\paragraph{Original POMDP model:} Consider the controlled process $\{x_k, y_k\}_{k \geq 0}$ governed by the dynamics:
\[
x_{k+1} \sim \mathcal{T}(\cdot \mid x_k, u_k), \quad y_k \sim Q(\cdot \mid x_k).
\]

\paragraph{Joint Finite Approximate Model:}  
Let $\mathcal{T}_n$ be the quantized transition kernel constructed as in Section~\ref{sec:transition_quant}, and $Q_n$ be the quantized observation channel as in Section~\ref{sec:observation_quant}. Then the approximate finite model is defined by:
\[
x_{k+1} \sim \mathcal{T}_n(\cdot \mid x_k, u_k), \quad y_k \sim Q_n(\cdot \mid x_k).
\]
Since both $\mathcal{T}_n$ and $Q_n$ take values in finite sets, the resulting belief space is finite-dimensional and suitable for standard finite-POMDP algorithms.

\begin{theorem}
    Under Assumptions \ref{channel_reg}, \ref{weakTtvQweakTtvQ2}(i)-(ii) and \ref{quantMeasurementL} we have:
    \[
    d_{W_{1}}\big(\eta^{{\cal T},Q},\eta^{{\cal T}_n,Q_n}\big) \leq \frac{D+4}{2} 
\left[ L_Q \, (\alpha+1)L_{\mathbb{X}_n}+\alpha_\Y L_{\Y_n} \right].
    \]
\end{theorem}

\begin{corollary}
\label{thm:joint_discounted_bound}
Let $\mathbb{X}$ and $\mathbb{Y}$ be compact metric spaces. Suppose Assumptions~\ref{main_assumption} and \ref{quantMeasurementL} hold, and that $K_2 < 1$. Then,  the optimal policy $\hat{\gamma}$ for the approximate model $(\mathcal{T}_n, Q_n)$
satisfies:
\[
\left\Vert J_\beta(\eta, \gamma^*_{\eta^{{\cal T}_n, Q_n}}) - J^*_\beta(\eta) \right\Vert_{\infty} 
\leq \frac{2\beta K_1}{(1-\beta)^2(1-\beta K_2)} \cdot \frac{D+4}{2} 
\left[ L_Q \, (\alpha+1)L_{\mathbb{X}_n}+\alpha_\Y L_{\Y_n} \right].
\]

\end{corollary}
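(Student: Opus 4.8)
## Proof Proposal

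The plan is to chain together three ingredients already established in the paper: the filter-kernel proximity bound of Theorem~\ref{W1main_2}, the discounted-cost robustness bound of Corollary~\ref{discount_robust} (equivalently Theorem~\ref{upperbound3proof}), and the explicit quantization error estimates for $\mathcal{T}_n$ and $\widetilde{Q}_n$ derived in Sections~\ref{sec:transition_quant} and~\ref{sec:observation_quant}. The target inequality concerns the policy $\gamma^*_{\eta^{{\cal T}_n,Q_n}}$ optimal for the jointly quantized belief-MDP, evaluated on the true belief-MDP $\eta$, so the natural route is: (i) bound the performance loss by the Wasserstein-1 distance $d_{W_1}(\eta,\eta^{{\cal T}_n,Q_n})$ between the true and approximate filter kernels, then (ii) bound that Wasserstein distance in terms of $d_{W_1}(\mathcal{T}_n,\mathcal{T})$ and $d_{TV}(Q_n,Q)$, then (iii) substitute the quantization rates $d_{W_1}(\mathcal{T}_n,\mathcal{T})\le(\alpha+1)L_{\mathbb{X}_n}$ and $d_{TV}(\widetilde{Q}_n,Q)\le\alpha_{\mathbb{Y}}L_{\mathbb{Y}_n}$.

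Concretely, first I would invoke Corollary~\ref{discount_robust}: under Assumption~\ref{main_assumption} with $K_2<1$ and $\eta^{{\cal T}_n,Q_n}$ weak Feller, we have
\[
\left\Vert J_{\beta}(\eta,\gamma^*_{\eta^{{\cal T}_n,Q_n}})-J^*_{\beta}(\eta)\right\Vert_{\infty}\le \frac{2\beta K_1}{(1-\beta)^2(1-\beta K_2)}\, d_{W_1}(\eta,\eta^{{\cal T}_n,Q_n}),
\]
using the Lipschitz bound $\|J_\beta(\eta)\|_{\Lip}\le K_1/(1-\beta K_2)$ from \cite[Lemma~4.2]{demirci2023average}. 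Next, apply Theorem~\ref{W1main_2} (valid since $\mathbb{X}$ is compact and Assumption~\ref{channel_reg} holds):
\[
d_{W_1}(\eta,\eta^{{\cal T}_n,Q_n})\le \frac{D+4}{2}\left(L_Q\, d_{W_1}(\mathcal{T}_n,\mathcal{T})+d_{TV}(Q_n,Q)\right).
\]
Finally, substitute the state-quantization estimate $d_{W_1}(\mathcal{T}_n,\mathcal{T})\le(\alpha+1)L_{\mathbb{X}_n}$ (the Lemma in Section~\ref{sec:transition_quant}, using that $\mathcal{T}$ is $\alpha$-Lipschitz in $x$, which is implied by Assumption~\ref{main_assumption}-\ref{regularity} since TV-Lipschitz dominates $W_1$-Lipschitz up to the diameter $D$) and the measurement-quantization estimate $d_{TV}(Q_n,Q)=d_{TV}(\widetilde{Q}_n,Q)\le\alpha_{\mathbb{Y}}L_{\mathbb{Y}_n}$ (from \cite[p.~257]{devranby2025near}, noting $Q_n$ and $\widetilde{Q}_n$ are informationally equivalent so yield the same value functions). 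Combining the three displays gives exactly the claimed bound.

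Two points require care rather than routine calculation. The first is the weak Feller property of $\eta^{{\cal T}_n,Q_n}$, which is a hypothesis of Corollary~\ref{discount_robust}: one must check that the quantized model satisfies either Assumption~\ref{TV_channel} or Assumption~\ref{TV_kernel}. Since $Q_n$ is control-free and $\mathcal{T}_n$ is a finite (hence trivially total-variation continuous in the discrete sense) kernel, Theorem~\ref{TV_kernel_thm} applies; alternatively one verifies Assumption~\ref{main_assumption} holds for the approximate system, which also furnishes the finiteness of $K_2^{\mathcal{T}_n,Q_n}$ implicitly used. The second, and genuinely the main obstacle, is the consistency of constants between the quantized-model assumptions and the hypotheses invoked: the substitution $d_{TV}(Q_n,Q)\to d_{TV}(\widetilde{Q}_n,Q)$ relies on the informational-equivalence argument that $J^*_\beta$ (and the value of $\gamma^*$) is unchanged under the relabeling $\widetilde{Q}_n\leftrightarrow Q_n$, so that the robustness bound proved for $\eta^{{\cal T}_n,\widetilde{Q}_n}$ transfers verbatim to $\eta^{{\cal T}_n,Q_n}$; this is exactly the remark made after the definition of $\widetilde{Q}_n$ in Section~\ref{sec:observation_quant} and should be cited explicitly rather than re-proved. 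Everything else is direct substitution.
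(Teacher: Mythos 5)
Your proposal is correct and follows essentially the same route as the paper: the corollary is obtained by chaining Corollary~\ref{discount_robust} (equivalently Theorem~\ref{upperbound3proof} with the Lipschitz bound $K_1/(1-\beta K_2)$), Theorem~\ref{W1main_2}, and the quantization estimates $d_{W_1}(\mathcal{T}_n,\mathcal{T})\le(\alpha+1)L_{\mathbb{X}_n}$ and $d_{TV}(Q_n,Q)\le\alpha_{\mathbb{Y}}L_{\mathbb{Y}_n}$, together with the informational equivalence of $Q_n$ and $\tilde{Q}_n$. The only caveat is that the $(\alpha+1)$ factor is derived in the paper under Assumption~\ref{weakTtvQ}, where $\alpha$ denotes the $W_1$-Lipschitz constant of $\mathcal{T}$, so your conversion from the total-variation Lipschitz constant of Assumption~\ref{main_assumption}-\ref{regularity} would strictly carry an extra factor of $D/2$; this is a notational wrinkle shared with the paper's own statement rather than a gap in your argument.
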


A similar result holds for the average cost case:

\begin{corollary}
\label{thm:joint_average_bound}
Suppose the assumptions of Corollary~\ref{thm:joint_discounted_bound} hold, and that $K_2^{\mathcal{T}n, Qn} < 1$. Further, suppose that for any initial belief $\pi \in \mathcal{P}(\mathbb{X})$, there exists a sequence $\beta_n(\pi) \to 1$ such that
$$
\left(1-\beta_n(\pi)\right) J _{\beta_n(\pi)}\left(\eta, \gamma^*_{ \eta^{\T_n, Q n}}\right)(\pi) \rightarrow J_ \infty\left(\eta, \gamma^* _{\eta^{\T_n, Q n}}\right)(\pi)
$$
Then, the suboptimality in the average cost is bounded as
\begin{align*}
\left\Vert J_{\infty}(\eta, \gamma^*_{\eta^{{\cal T}_n, Q_n}}) - J^*_{\infty}(\eta) \right\Vert_{\infty}
\leq \left( \frac{K_1}{1 - K_2} + \frac{K_1}{1 - K_2^{\T_n, Q_n}} \right) 
\cdot \frac{D+4}{2} 
\left[ L_Q \, (\alpha+1)L_{\mathbb{X}_n} + \alpha_{\mathbb{Y}} L_{\mathbb{Y}_n} \right].
\end{align*}
\end{corollary}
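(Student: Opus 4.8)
The statement is obtained by feeding the two quantization error estimates into the general average-cost robustness bound, so the argument is essentially assembly; I would carry it out in three steps.

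\emph{Step 1: place the joint-quantized model in the scope of Corollary~\ref{robust_average}.} First I would verify that the POMDP $(\mathcal{T}_n,Q_n)$ meets the hypotheses of Corollary~\ref{robust_average}. The state-quantization construction of Section~\ref{sec:transition_quant} produces a kernel $\mathcal{T}_n$ that is again continuous in total variation and retains the regularity constant of $\mathcal{T}$, so Assumption~\ref{main_assumption} holds for the approximate model and $K_2^{{\cal T}_n,Q_n}$ is finite (assumed $<1$); $\eta^{{\cal T}_n,Q_n}$ is then weak Feller by Theorem~\ref{TV_channel_thm}. The observation-quantization of Section~\ref{sec:observation_quant} yields a finite channel $Q_n$ that is informationally equivalent to the density channel $\tilde Q_n$, so the two induce the same belief kernel and the same value functions, and one may work throughout with $\tilde Q_n$, which inherits Assumption~\ref{channel_reg} with constant $L_Q$. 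Finally, the vanishing-discount limit assumed in the statement is precisely the one required in Corollary~\ref{robust_average}. Hence that corollary applies and gives
\[
\bigl\|J_\infty(\eta,\gamma^*_{\eta^{{\cal T}_n,Q_n}})-J^*_\infty(\eta)\bigr\|_\infty
\le\Bigl(\tfrac{K_1}{1-K_2}+\tfrac{K_1}{1-K_2^{{\cal T}_n,Q_n}}\Bigr)\tfrac{D+4}{2}\bigl(L_Q\,d_{W_1}(\mathcal{T}_n,\mathcal{T})+d_{TV}(Q_n,Q)\bigr).
\]

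\emph{Step 2: insert the quantization errors.} By the quantized-transition lemma of Section~\ref{sec:transition_quant}, $d_{W_1}(\mathcal{T}_n,\mathcal{T})\le(\alpha+1)L_{\mathbb{X}_n}$; and by the total-variation estimate of Section~\ref{sec:observation_quant} (from \cite{devranby2025near} under Assumption~\ref{quantMeasurementL}), $d_{TV}(Q_n,Q)=d_{TV}(\tilde Q_n,Q)\le\alpha_{\mathbb{Y}}L_{\mathbb{Y}_n}$. Substituting these into the display of Step~1 — and using $J^*_\infty(\eta)=J_\infty(\eta)$ from the ACOE, Theorem~\ref{mainEmre}(i) — yields exactly the asserted inequality. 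As an internal consistency check, the same constant is recovered by instead combining Theorem~\ref{vanishingproof1} with the joint filter-kernel bound of the theorem preceding Corollary~\ref{thm:joint_discounted_bound} together with the Lipschitz estimates $\lim_{\beta\to1}\|J_\beta(\eta)\|_{\Lip}\le K_1/(1-K_2)$ and $\lim_{\beta\to1}\|J_\beta(\eta^{{\cal T}_n,Q_n})\|_{\Lip}\le K_1/(1-K_2^{{\cal T}_n,Q_n})$; this is the average-cost analogue of the proof of Corollary~\ref{thm:joint_discounted_bound}.

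\emph{Main obstacle.} None of the inequalities is difficult; the only delicate point is the bookkeeping around the measurement quantization in Step~1 — justifying that the finite channel $Q_n$ on $\mathbb{Y}_n$ may be replaced, without altering any belief trajectory or value function, by the density channel $\tilde Q_n$ on $\mathbb{Y}$, and that $\tilde Q_n$ satisfies Assumption~\ref{channel_reg} with the same constant $L_Q$, so that the Wasserstein-in-$\mathcal{T}$ / total-variation-in-$Q$ branch of Corollary~\ref{robust_average} is legitimately available. Everything downstream of that identification is substitution.
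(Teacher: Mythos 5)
Your proposal is correct and follows exactly the route the paper intends (it states this corollary without a separate proof): apply the average-cost robustness bound of Corollary~\ref{robust_average} (equivalently, Theorem~\ref{vanishingproof1} combined with the Lipschitz estimates $K_1/(1-K_2)$ and $K_1/(1-K_2^{\mathcal{T}_n,Q_n})$ and the joint filter-kernel bound), then substitute $d_{W_1}(\mathcal{T}_n,\mathcal{T})\le(\alpha+1)L_{\mathbb{X}_n}$ and $d_{TV}(Q_n,Q)\le\alpha_{\mathbb{Y}}L_{\mathbb{Y}_n}$. Your Step~1 bookkeeping on the informational equivalence of $Q_n$ and $\tilde Q_n$ and on the hypotheses for the approximate model is at the same level of detail as the paper's own treatment, and the resulting constant matches the statement.
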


\begin{remark}The approach here is a computationally more direct counterpart when compared with a belief quantization approach under which the space of probability measures are quantized  \cite{SYLTAC2017POMDP,tutorialkara2024partially,devranby2025near} leading to more tedious implementation. Notably, under only weak Feller regularity of the kernel \cite{SYLTAC2017POMDP} leads to asymptotic near optimality; and under Wasserstein regularity of the kernel, via the analysis in \cite[Theorem 5]{KSYContQLearning}, one can obtain approximations with a rate of convergence.  
\end{remark}

\section{Examples}

We first present examples that verify our standing assumptions. We then give examples where the perturbations of the transition and observation kernels can be computed explicitly.

\begin{example}\label{ContE}
Consider $\mathbb{X} = [0,1]$, $\mathbb{U} = [-p,p]$, and the stage cost $c(x,u) = x - u$. Let the transition kernel be the truncated normal
\[
\mathcal{T}(\cdot \mid x,u) = \overline{N}(x+u,\sigma^2),
\]
that is, the law of a Gaussian $N(x+u,\sigma^2)$ truncated to $[0,1]$. Its density $f$ on $[0,1]$ is
\[
f(x;\mu,\sigma)
= \frac{1}{\sigma}\,
\frac{\varphi\!\left(\frac{x-\mu}{\sigma}\right)}
{\Phi\!\left(\frac{1-\mu}{\sigma}\right) - \Phi\!\left(\frac{-\mu}{\sigma}\right)} \,\mathbf{1}_{[0,1]}(x),
\]
where $\varphi$ and $\Phi$ are the standard normal pdf and cdf, respectively. For any $0 \le x < y \le 1$,
\[
\frac{\bigl\|\mathcal{T}(\cdot \mid y,u) - \mathcal{T}(\cdot \mid x,u)\bigr\|_{TV}}{\,y-x\,}
\;\le\; \frac{\sqrt{2}}{\sigma \sqrt{\pi}}.
\]
Hence $\mathcal{T}$ satisfies Assumption~\ref{main_assumption}\textup{-}\ref{regularity} with $\alpha = \frac{\sqrt{2}}{\sigma \sqrt{\pi}}$ (see \cite{demirci2023average}). Together with any Lipschitz continuous cost function, Assumption~\ref{main_assumption} is satisfied.
\end{example}

\begin{example}\label{ex:W1}
Let $x_{t+1} = f(x_t,u_t,w_t)$, where $f$ is Lipschitz in $x$: there exists $\alpha < \infty$ such that
\[
\lvert f(x_n,u,w) - f(x,u,w) \rvert \le \alpha \lvert x_n - x \rvert
\quad \text{for all }(x_n,x,u,w).
\]
Let $\mu$ denote the noise distribution (a probability measure on the noise space). Then the Wasserstein-1 distance between the corresponding transition kernels satisfies
\begin{align*}
W_1\!\bigl(\mathcal{T}(\cdot \mid x_n,u), \mathcal{T}(\cdot \mid x,u)\bigr)
\le \alpha \lvert x_n - x \rvert.
\end{align*}
Thus Assumption~\ref{weakTtvQweakTtvQ2}(i)-(ii) on the transition kernel holds (see \cite{DKY_ACC2025}).
For the observation kernel, consider $y_t = h(x_t) + v_t$ with $h \in \mathrm{Lip}(\mathbb{X},\beta)$ and $v_t$ independent noise. Then $Q(\cdot \mid x)$ is continuous in total variation, and Assumption~\ref{weakTtvQweakTtvQ2}(iii) holds. See \cite[Section~2.1]{KSYContQLearning} for additional explicit examples.
\end{example}

\begin{example}\label{ex:channel_reg}
Suppose $y_t = h(x_t) + V$, where $h:\mathbb{X}\to\mathbb{R}^m$ is continuous and $V$ admits a continuous density $\varphi$ with respect to a reference measure $\nu$.  
If $V$ has finite (e.g., compact) support, then Assumption~\ref{quantMeasurementL} holds.  
If, in addition, $h$ is Lipschitz continuous, then $Q(\cdot \mid x)$ is continuous in total variation with a Lipschitz modulus, and Assumption~\ref{channel_reg} holds.
\end{example}

We now present examples of original and approximate models where the above results can be applied explicitly to quantify the effect of perturbations in the transition and observation kernels (e.g., via joint quantization). The first example pertains to perturbations in the transition kernel, while the second focuses on the observation channel. These illustrate how our bounds can be verified in practice.
\begin{example}
Consider a dynamical system with model dynamics
    $$
    x_{t+1} = f(x_t, u_t) + w_t,
    $$
    and an approximate model given by
    $$
    x_{t+1} = f_n(x_t, u_t) + w_t^{(n)},
    $$
    where $ w_t \sim \mu $ and $ w_t^{(n)} \sim \mu_n $ are independent noise processes.
    
   Assume the observation functions satisfy a uniform approximation condition:
    $$
    |f_n(x, u) - f(x, u)| \leq C \quad \text{for all } (x, u),
    $$
    and that the noise distributions satisfy $ W_1(\mu_n, \mu) < \infty $.
    
    Then, the corresponding transition kernels $ T(\cdot \mid x, u) $ and $ T_n(\cdot \mid x, u) $ satisfy
    $$
    d_{W_1}(T_n, T) \leq C + W_1(\mu_n, \mu).
    $$
    \end{example}
    \begin{example}
        
Suppose the true observation channel is defined by
$$
y_t = h(x_t) + v_t,
$$
and the approximate one is given by
$$
y_t = h_n(x_t) + v_t,
$$
where $ v_t \sim \mathcal{N}(0, I) $ is i.i.d. Gaussian noise.

Assume that the approximation error in the observation function is uniformly bounded:
$$
\|h_n(x) - h(x)\| \leq C \quad \text{for all } x.
$$

Then, for the corresponding observation channels $ Q(\cdot \mid x) = \mathcal{N}(h(x), I) $ and $ \bar{Q}(\cdot \mid x) = \mathcal{N}(h_n(x), I) $, we have the total variation distance:
$$
\left\| Q(\cdot \mid x) - \bar{Q}(\cdot \mid x) \right\|_{TV}
= \frac{1}{2} \int_{\mathbb{R}^d} \left| \phi(y - h(x)) - \phi(y - h_n(x)) \right| dy,
$$
where $ \phi(\cdot) $ denotes the standard multivariate normal density.

Using Pinsker’s inequality and the closed-form expression for KL divergence between two Gaussians with the same covariance, it follows that:
$$
\left\| \mathcal{N}(\mu_1, \Sigma) - \mathcal{N}(\mu_2, \Sigma) \right\|_{TV}
\leq \sqrt{ \frac{1}{2} D_{KL}\left( \mathcal{N}(\mu_1, \Sigma) \,\|\, \mathcal{N}(\mu_2, \Sigma) \right) }.
$$

Recall that for multivariate Gaussian distributions with identical covariance matrix $ \Sigma $, the Kullback–Leibler (KL) divergence admits the closed-form expression:
$$
D_{KL}\left( \mathcal{N}(\mu_1, \Sigma) \,\|\, \mathcal{N}(\mu_2, \Sigma) \right)
= \frac{1}{2} (\mu_1 - \mu_2)^\top \Sigma^{-1} (\mu_1 - \mu_2).
$$

In our case, where $ \Sigma = I $, this simplifies to:
$$
D_{KL}\left( \mathcal{N}(\mu_1, I) \,\|\, \mathcal{N}(\mu_2, I) \right)
= \frac{1}{2} \|\mu_1 - \mu_2\|^2.
$$

Combining this with Pinsker’s inequality yields:
$$
\left\| \mathcal{N}(\mu_1, I) - \mathcal{N}(\mu_2, I) \right\|_{TV}
\leq \sqrt{ \frac{1}{2} \cdot \frac{1}{2} \|\mu_1 - \mu_2\|^2 }
= \frac{1}{2} \|\mu_1 - \mu_2\|.
$$

Therefore, if $ \|\mu_n(x) - \mu(x)\| \leq C $ uniformly over all $ x $, we conclude that
$$
\sup_x \left\| \mathcal{N}(\mu_n(x), I) - \mathcal{N}(\mu(x), I) \right\|_{TV} \leq \frac{C}{2}.
$$

Hence,
$$
d_{TV}(Q, \bar{Q})=\sup_x \left\| Q(\cdot \mid x) - \bar{Q}(\cdot \mid x) \right\|_{TV} \leq \frac{C}{2} < \infty.
$$
   \end{example}
       These examples confirm that both $d_{W_1}(T_n, T)$ and $d_{TV}(Q_n, Q)$ can be controlled under natural model approximations, thus satisfying the assumptions in our theoretical results.

\begin{example}[Observation Channel Approximation in a Bearing-Only Localization Model]

A concrete example of observation channel approximation in total variation arises in the bearing-only localization problem studied in \cite{Dufour2025}. The model is structured as follows.

{\bf State and Observation Spaces.} The hidden state $x \in \mathbb{X} = \left[-\frac{l}{2}, \frac{l}{2}\right] \times \left[-\frac{L}{2}, \frac{L}{2}\right] \subset \mathbb{R}^2$ denotes the fixed (but unknown) location of a target. The observation space is $\mathbb{Y} = \boldsymbol{\Theta} \times \mathbf{Z}$, where $\boldsymbol{\Theta} = \mathbb{R}$ and $\mathbf{Z} \subset \mathbb{R}^4$ is finite and encodes the sensor location.

{\bf Sensor Dynamics.} The sensor location $Z_t$ evolves deterministically according to a known mapping $T$ parameterized by a control action $A_t$, such that:
\[
Z_{t+1} = \mathcal{T}(A_t) Z_t,
\]
where $\mathcal{T}(a)$ is a rotation-translation matrix with two cases:
\[
\mathcal{T}(a) = 
\begin{cases}
\begin{bmatrix}
1 & \frac{\sin(a\tau)}{a} & 0 & -\frac{1-\cos(a\tau)}{a} \\
0 & \cos(a\tau) & 0 & -\sin(a\tau) \\
0 & \frac{1-\cos(a\tau)}{a} & 1 & \frac{\sin(a\tau)}{a} \\
0 & \sin(a\tau) & 0 & \cos(a\tau)
\end{bmatrix}, & \text{if } a \neq 0, \\
\begin{bmatrix}
1 & \tau & 0 & 0 \\
0 & 1 & 0 & 0 \\
0 & 0 & 1 & \tau \\
0 & 0 & 0 & 1
\end{bmatrix}, & \text{if } a = 0.
\end{cases}
\]

{\bf Observation Model.} At each time $t$, the full observation is the pair $(\Theta_t, Z_t)$, where:
\[
\Theta_t = h(x, Z_t) + w_t, \quad w_t \sim \mathcal{N}(0, \sigma^2),
\]
\[
h(x, z) := \operatorname{atan2}(z^{(1)} - x^{(1)}, z^{(2)} - x^{(2)}),
\]
representing the bearing angle between the sensor and the target. The observation kernel thus takes the form:
\[
Q(dy, dz \mid x, u) = \delta_{T(u)}(dz) \cdot \mathcal{N}(h(x, T(u)), \sigma^2)(dy),
\]
where $T(u)$ is the deterministic sensor location and $\delta_{T(u)}$ is the Dirac measure centered at $T(u)$.
\cite{Dufour2025} proposes to quantize the bearing angle $\theta \in \boldsymbol{\Theta}$ via a projection operator $P_M$ onto a finite set $\boldsymbol{\Theta}_M \subset [-M, M]$:
\[
P_M(\theta) = \arg \min_{\theta' \in \boldsymbol{\Theta}_M} |\theta - \theta'|,
\]
where $M > \pi$ is the truncation parameter, with $\pi$ denoting the standard mathematical constant. This yields an approximate observation channel $\bar{Q}$, where the quantized measurement $\tilde{\theta} = P_M(\theta)$ is used instead of $\theta$.

\cite[Proposition 6]{Dufour2025} establish the following bound on the approximation error in the belief update step:
\[
\rho_{BL}(\mu_1, \bar{\mu}_1) \leq C \cdot \frac{e^{-\frac{1}{2 \sigma^2}(M - \pi)^2}}{M - \pi},
\]
where $\mu_1$ and $\bar{\mu}_1$ denote the filtering distributions under the original and quantized observation models, respectively. In addition, bounds on the degradation of the value function due to this approximation are also provided.

In the terminology of our framework, this corresponds to approximating the observation channel $Q$ with a quantized version $\bar{Q}$, for which it can be readily shown that:
\[
d_{TV}(\bar{Q}, Q) \leq C \cdot \frac{e^{-\frac{1}{2 \sigma^2}(M - \pi)^2}}{M - \pi}.
\]
As such, our Theorem~\ref{main1} recovers the above result on filtering continuity, while Corollary~\ref{discount_robust} yields a corresponding uniform bound on the value function difference under the application of suboptimal policies derived from the approximate model.

This example illustrates how our general approximation framework both recovers and strengthens estimation-theoretic guarantees established in model-specific contexts, thereby demonstrating its broad applicability and unifying nature.

\end{example}

\section{Conclusion}
In this work, we established robustness and continuity results for optimal control policies in partially observable Markov decision processes (POMDPs), addressing perturbations in initial distributions, transition and observation kernels, and model approximations. Using Wasserstein and total variation metrics, we derived explicit bounds on deviations of belief-MDP kernels and optimal costs.

\bibliographystyle{plain}
\bibliography{SerdarBibliography,DKY}
\end{document}